\documentclass[10pt,a4paper,reqno]{article}
\usepackage{float}
\usepackage{graphicx}
\usepackage{titlesec}
\usepackage{lmodern}
\usepackage{amsmath,amssymb,amsthm,mathtools,mathrsfs,color}
\usepackage{authblk,enumitem}
\titleformat{\section}{\center\normalfont\fontsize{13.5}{10}\bfseries}{\thesection}{0.5em}{}
\titleformat{\subsection}{\normalfont\fontsize{12}{17}\bfseries}{\thesubsection}{0.5em}{}
\allowdisplaybreaks
\usepackage[margin=2cm,includehead]{geometry}
\usepackage{xcolor}
\usepackage{fancyhdr}
\usepackage{tikz}
\usetikzlibrary{arrows.meta,calc,positioning}
\usepackage[linktocpage=true,colorlinks=true,citecolor=blue,linkcolor=blue,urlcolor=blue, pagebackref]{hyperref}
\usepackage[nameinlink,capitalise,noabbrev]{cleveref}
\allowdisplaybreaks

\usetikzlibrary{calc}

\usepackage{comment}
\graphicspath{}

\usepackage{cite}
\usepackage{latexsym}
\usepackage{amscd}
\usepackage{url}
\usepackage[alphabetic,backrefs]{amsrefs}
\usepackage{lipsum} 
\usepackage{tikzpagenodes}
\usepackage{faktor}
\usetikzlibrary{tikzmark}
\usepackage{slashed}

\newtheorem{thm}{Theorem}[section]

\newtheorem{lemma}[thm]{Lemma}
\newtheorem{prop}[thm]{Proposition}
\newtheorem{conj}[thm]{Conjecture}

\theoremstyle{definition}
\newtheorem{defn}[thm]{Definition}
\newtheorem{rem}[thm]{Remark}

\numberwithin{equation}{section}
\def\4{\Sigma_{14}}

\def\R{\mathbb{R}}
\def\SU{\mathrm{SU}}
\def\U{\mathrm{U}}

\def\Z{\mathbb{Z}}

\def\Spin{\operatorname{Spin}}
\def\Id{\textup{Id}}

\def\ns{\kern-5pt}
\def\ba{\begin{array}}
\def\ea{\end{array}}
\def\be{\begin{equation}}

\def\ee{\end{equation}}
\def\d{\mathrm{d}}

\def\y{\\[3pt]}
\def\yy{\\[7pt]}

\def\G2{\mathrm{G}_2}
\def\g2{\varphi}

\def\n{\mathfrak n}

\def\fg{\mathfrak{g}}

\def\SO{\mathrm{SO}}
\def\Sp{\mathrm{Sp}}

\def\lieg2{\mathfrak{g}_2}
\def\Sp{\mathrm{Sp}}

\DeclareMathOperator\vol{vol}

\DeclareFontFamily{U}{MnSymbolC}{}
\DeclareSymbolFont{MnSyC}{U}{MnSymbolC}{m}{n}
\DeclareFontShape{U}{MnSymbolC}{m}{n}{
    <-6>  MnSymbolC5
   <6-7>  MnSymbolC6
   <7-8>  MnSymbolC7
   <8-9>  MnSymbolC8
   <9-10> MnSymbolC9
  <10-12> MnSymbolC10
  <12->   MnSymbolC12}{}
\DeclareMathSymbol{\intprod}{\mathbin}{MnSyC}{'270}

\DeclareUnicodeCharacter{202F}{\,}
\begin{document}
\parskip2pt

\title{New Inhomogeneous Nearly $\text{G}_2$-metric on the Berger Space and a Sine Cone Desingularisation}

\author{Simon Salamon \hspace{2cm} Ragini Singhal}
\date{}
\maketitle

\begin{abstract}
Nearly $\G2$ manifolds are positive Einstein seven-dimensional manifolds whose associated cone metric has special holonomy equal to, or contained in, Spin(7). We study nearly parallel $\G2$-structures invariant under a cohomogeneity-one action of $\SO(4)$, and construct the first inhomogeneous nearly parallel $\G2$-structure on the Berger space SO(5)/SO(3). The cone over this metric has full holonomy $\Spin(7)$. The existence of the solution is established by a rigorous computer-assisted shooting argument.

We also establish a local desingularisation of a finite quotient of the sine cone over $S^3\times S^3$ by gluing in the asymptotically conical $C_7$ bubble constructed by Foscolo--Haskins--Nordstr\"om. We show that as the singular orbit of a family of nearly parallel $\G2$ manifolds collapses, away from the singular orbit the desingularising family converges  to the sine cone quotient, while the natural blow-up converges to the AC torsion-free $\G2$-metric of $C_7$.

\end{abstract}

\tableofcontents{}

\vfill

\newpage

\section{Introduction}
Nearly parallel $\mathrm G_2$-geometry lies at the intersection of exceptional holonomy, spin geometry and the theory of Einstein metrics.  A nearly parallel $\mathrm G_2$-structure on a seven-manifold $M$ is defined by a positive three-form $\varphi$ satisfying
$$
d\varphi=\lambda*_{\varphi}\varphi
$$
for some non-zero constant $\lambda\in\R$.  The form $\varphi$ determines a Riemannian metric $g_\varphi$, which is Einstein with positive scalar curvature, and equivalently determines a real Killing spinor. Moreover, the Riemannian cone over a smooth compact nearly $\G2$ manifold has holonomy contained in $\Spin(7)$.  When the cone has full holonomy $\Spin(7)$, the nearly parallel structure is called proper.  Thus proper nearly parallel $\mathrm G_2$-manifolds provide both a distinguished class of positive Einstein seven-manifolds and the links of holonomy $\Spin(7)$-cones. 

The authors of \cite{book} showed that, apart from the round sphere, complete simply connected nearly parallel $\mathrm G_2$-manifolds can be classified as proper, Sasaki--Einstein, and 3-Sasakian, according as the dimension of the space of Killing spinors is 1, 2, and 3, and the holonomy of the metric cone is $\Spin(7)$, $\SU(4)$, or $\Sp(2)$, respectively. This description explains the appearance of nearly parallel $\mathrm G_2$-geometry in theoretical physics. These spaces, traditionally called manifolds of weak $\mathrm G_2$-holonomy \cite{Gray1971}, arise as internal spaces in supersymmetric Freund--Rubin compactifications of eleven-dimensional supergravity, with the number of Killing spinors measuring the supersymmetry preserved in four dimensions \cite{BDS02}.

Simply connected homogeneous nearly parallel $\G2$-manifolds were classified in  \cite{friedkath}, and up to homothety they are: the round and squashed $7$-spheres, two homogeneous Einstein metrics on $SU(3)/\U(1)$, the Berger space $SO(5)/SO(3)$, and the homogeneous Sasaki–Einstein spaces $V_{5,2}$, $M^{1,1,1}$, and $Q^{1,1,1}$. Infinite families of inhomogeneous Sasaki--Einstein  and $3$-Sasakian $7$-manifolds are known \cites{GalickiLawson1988,Galicki2,Galicki1,BGMR98,GauntlettMartelliSparksWaldram2004,BoyerGalickiKollar2005,CveticLuPagePope2005PRL,BoyerGalickiKollar2006,Sparks2011,TomasielloZaffaroni2011}. Moreover, every $3$-Sasakian $7$-manifold admits a canonical squashing which produces a second Einstein metric carrying a proper nearly parallel $\mathrm G_2$-structure \cites{GalickiSalamon1996,friedkath}.  However, explicit compact proper metrics that are neither homogeneous nor obtained from the $3$-Sasakian squashing construction remain scarce. It is in this direction that the present paper contributes.

Cohomogeneity one provides a natural setting in which to search for such examples. Historically, this approach has been very effective in the construction of inhomogeneous positive Einstein metrics. B\"ohm's landmark work \cite{Boh98} produced the first inhomogeneous Einstein metrics on spheres, together with further examples on other low-dimensional spaces. Foscolo--Haskins  \cite{Haskins-Lorenzo} subsequently used cohomogeneity-one methods to construct the first complete inhomogeneous nearly K\"ahler structures on $S^6$ and $S^3\times S^3$. In our situation, local solutions can be obtained by lifting the nearly half-flat $\mathrm{SU}(3)$-structures on $6$-dimensional principal orbits \cites{nearlyhypo-nhf,liftingsu3tonhf,cvlt-liftnhf}. Previously Podestà \cite{podesta_2021} constructed incomplete inhomogeneous cohomogeneity-one nearly parallel $\mathrm G_2$-metrics on $S^3\times \R^4$. 

The difficulty in obtaining complete examples is a global problem. Indeed, the induced metric has positive Ricci curvature, so Bonnet–Myers implies that every complete nearly parallel $\mathrm G_2$-manifold is compact and the fundamental group $\pi_1(M)$ is finite. The orbit space is therefore a closed interval with a singular orbit at each end. On the open set of principal orbits, the nearly parallel equation reduces to a finite-dimensional system of ODEs. Completeness is then equivalent to a global boundary-value problem.  A trajectory emanating smoothly from one singular orbit must remain non-degenerate and close smoothly at a second singular orbit. 

By a classification result of \cite{Cleyton2004}, the principal orbit $P=G/H$ of a cohomogeneity-one nearly parallel $\G2$ manifold $M$ is up to finite quotients one of
\[
    S^6 = \frac{G_2}{SU(3)},\quad\mathbb{CP}^3 = \frac{Sp(2)}{Sp(1)\times \U(1)},
    \quad \mathbb{F}_{1,2} = \frac{SU(3)}{T^2},\quad
     S^3 \times S^3 = \frac{\SU(2)^3}{\SU(2)}
\]
arising in nearly K\"ahler geometry, or one of $S^5 \times S^1$,
$S^3 \times T^3$, or $T^6$. When $G$ is simple, the authors of \cite{Cleyton2004} showed that the homogeneous structure is the only possibility, hence we cannot get any new examples in these cases. For the non-simple cases, $P=X\times T^m$, $m\ge0$, $\pi_1(X)=0$, and Seifert–Van Kampen implies that
$\operatorname{rank}\pi_1(M) \ge\ m-2$.
It follows that the only possible principal orbits for a cohomogeneity-one nearly parallel $\G2$-structure are $S^3\times S^3$ and $S^5\times S^1$, up to finite quotients.

In this article, we focus on the case where $P$ is a finite quotient of $S^3\times S^3$. We study nearly parallel structures invariant under a cohomogeneity-one action of $\mathrm{SO}(4)$, with principal orbit
$$P=\mathrm{SO}(4)/\mathbb Z_2^2
\cong (S^3\times S^3)/\Delta Q_8.$$ 
Each singular orbit has codimension two and corresponds to the collapse
of a circle in the principal orbit. Passing to the double cover
$\Sp(1)\times\Sp(1)$ of $\SO(4)$, each collapsing circle takes the form
\[
C^u_{p,q}
 =\bigl\{(e^{pu\theta},e^{qu\theta}):\theta\in\mathbb R\bigr\}
\subset S_u^1\times S_u^1\subset\Sp(1)\times\Sp(1),\]
for $u\in\{i,j,k\}$. 
Thus $u$ specifies the maximal torus containing the collapsing circle, while the weights $p,q$ determine the so-called slope $(p,q)$ of the singular orbit. 

Known examples of nearly parallel $\G2$-structures that lie in this class are the round and squashed $7$-sphere, the Berger space $SO(5)/SO(3)$, and the 3-Sasakian structures discovered by Grove--Wilking--Ziller \cite{ziller} that are associated to the $\Lambda^2_-$-bundle over the self-dual Einstein orbifold metrics on $S^4$ introduced by Hitchin \cite{Hitchin_Einstein}. This $SO(4)$ action constitutes a natural testing ground for constructing inhomogeneous examples of proper nearly $\G2$-manifolds, and we shall construct complete inhomogeneous proper nearly parallel $\G2$-metrics of this type on two distinct manifolds.
The first is a new inhomogeneous nearly $\G2$ metric on the isotropy irreducible space SO(5)/SO(3). It admits a cohomogeneity-one action arising from the choice of an inclusion of $SO(4)$ in $SO(5)$.

\medskip
\noindent
\textbf{Theorem \ref{thm:certified-berger}} \textit{There exists a smooth inhomogeneous complete nearly parallel $G_2$-structure on
$SO(5)/SO(3)$, invariant under the cohomogeneity-one $SO(4)$-action.}

\medskip
We also apply our machinery to obtain a finite quotient of the $\SU(2)^2\times \rm{U}(1)$-invariant Sasaki--Einstein structure on a member of the non-primitive family $N_A^7$ in Hoelscher's classification of compact simply connected cohomogeneity-one
seven-manifolds \cite{Hoelscher2010}. The corresponding simply connected cohomogeneity-one manifold is the
Sasaki--Einstein manifold $A^{1,3,2}$ of \cite{TomasielloZaffaroni2011}, and we show that
the certified solution coincides with their explicit metric. Although this construction does not produce a new metric, we nevertheless retain the validated shooting calculation for this solution. It recovers $A^{1,3,2}$ from the singular initial-value problem and provides a benchmark for the computer-assisted framework. The metric admits a free involution $\iota$, and we write $M_{23}=A^{1,3,2}/\langle\iota\rangle$. Since $\iota$ acts as a reflection on the two-dimensional space of real Killing spinors on $A^{1,3,2}$, exactly one Killing spinor descends to $M_{23}$, defining a proper nearly parallel $\G2$-structure on the quotient. 
\smallskip
Our construction is inspired by work of Foscolo--Haskins \cite{Haskins-Lorenzo}, who combine the analysis of
cohomogeneity-one solutions emanating from singular orbits with discrete
symmetries of the evolution equations. A solution that meets the fixed
locus of a time-reversing symmetry can be reflected across the
corresponding principal orbit, thereby producing a smooth compact
solution. We first
determine the families of solutions that extend smoothly from the
relevant singular orbits. The problem of obtaining a complete solution is
reduced to finding initial data and a positive time at which the
corresponding trajectory meets the fixed locus of an appropriate
time-reversing symmetry. Reflection across the principal orbit at that
time produces a solution closing smoothly at a second singular orbit. 

The geometric reflection principle in \cite{Haskins-Lorenzo} carries over to the $\G2$ setting, but the argument used to find the reflecting trajectory does not. In the nearly K\"ahler problem, the invariant evolution equations and the smooth singular initial data can be reduced, after passing to maximal-volume orbits, to an essentially one-dimensional matching problem. The required compact solutions can therefore be detected through intersections of curves and controlled by qualitative estimates. The $\G2$ problem has a substantially larger deformation space. Both the space of invariant structures on a principal orbit and the families of solutions extending smoothly from a singular orbit have additional parameters, so the maximal-volume reduction no longer turns the reflection conditions into a one-dimensional problem.
This difference is already visible at a singular orbit with slope $(3,-1)$. Although the induced geometry of the singular orbit is described by two parameters, it does not determine the nearby nearly parallel $\G2$-structure uniquely. A further parameter enters at second order in the smooth Taylor expansion and generically breaks the additional $\U(1)$-symmetry. Consequently, the relevant local solutions form a three-parameter family, and the reflection conditions become multidimensional shooting problems, a four-dimensional map for the Berger space and, after imposing the additional $\U(1)$-symmetry, a three-dimensional map for $M_{23}$. Thus the curve-intersection argument of \cite{Haskins-Lorenzo} cannot be transplanted directly. We instead combine the Eschenburg--Wang smoothness criterion, which determines the admissible singular initial data, with validated ODE integration and the Krawczyk inclusion theorem to prove the existence of the required zeros. Similar arguments were recently used by \cites{NienhausWink2025,Buttsworth-Hodgkinson,wang2026doublywarpedproducteinstein,Vasquez} to construct complete inhomogeneous Einstein metrics on higher-dimensional spheres, products of spheres, and complex projective spaces.

The same evolution equations also reveal what happens when a smooth singular orbit shrinks to a point. This gives rise to an alternative approach to constructing complete nearly parallel
$\G2$-manifolds: begin with a compact singular model and attempt to desingularise it. A natural source of such models is the sine cone construction.  If $N^6$ carries a nearly K\"ahler structure, then $SC(N)= (0,\pi)\times N$ carries a nearly parallel $\G2$-structure endowed with the metric $dt^2+(\sin t)^2g_N$ \cites{Bar_realkilling, friedkath} and generally has conical vertices at $t=0$ and $t=\pi$. The tangent cone $C(N)=(0,\infty)\times N$ at each vertex of the sine cone is equipped with
a metric $dr^2+r^2g_N$ possessing a torsion-free $\G2$-structure, i.e.\ $d\g2=0$ and $d^*\g2=0$.
When $N=S^6$, the cone $C(N)$ is Euclidean and the metric completion of $SC(N)$ is the round
$7$-sphere, but in general there are genuine singularities. A prerequisite for smoothing one end is
the existence of a complete asymptotically conical (AC) torsion-free $\G2$-manifold with asymptotic cone $C(N)$. After rescaling, such a
manifold provides the local model and `bubble' that one expects to insert at the singular point.

Several asymptotically conical torsion-free $\G2$-metrics with
homogeneous nearly K\"ahler links are known. The classical metrics of
Bryant--Salamon \cite{BryantSalamon1989} resolve the cones over $\mathbb{CP}^3$,
$\SU(3)/T^2$, and $S^3\times S^3$. The study of complete
cohomogeneity-one torsion-free $\G2$-metrics asymptotic to the cone
over $S^3\times S^3$ and its finite quotients has produced several
new families; see, in particular,
\cites{BrandhuberGomisGubserGukov2001,Bogoyavlenskaya2013,FHN}.
Desingularisations of compact torsion-free $\G2$-spaces with isolated
conical singularities have also been studied by Karigiannis
\cite{KarigiannisDesingularization}. The nearly parallel problem,
however, is different; the singular background is not torsion-free, and
the AC metric is expected to arise only after blowing up a neighbourhood
of the singularity.

The analogous picture has already proved fruitful in nearly K\"ahler geometry. The cohomogeneity-one metrics of \cite{Haskins-Lorenzo} may be viewed as smoothings of the sine cone over the Sasaki--Einstein
manifold $S^2\times S^3$, with asymptotically conical Calabi--Yau metrics associated with the conifold appearing as bubbles in the
degenerating limit \cites{CandelasDeLaOssa1990, Foscolo2018}.  This six-dimensional picture motivates our
local desingularisation of the nearly parallel $\G2$ sine cone by the
AC $C_7$ metric.

In this paper, we desingularise one end of the sine cone over $N=(S^3\times S^3)/\mathbb Z_4$, equipped with the nearly K\"ahler structure induced from the homogeneous one on $S^3\times S^3$. The smoothing is obtained from solutions extending across a singular orbit whose collapsing circle has slope $(1,-1)$. 
We first determine the family of nearly parallel
$\G2$-solutions that extend smoothly over this singular orbit and
then select a limiting one-parameter subfamily for which the orbit collapses.
Away from the collapsing orbit, these solutions converge to the sine
cone, so that its cone vertex at $t=0$ is replaced by a smooth
singular orbit. The geometry near the collapsing orbit is revealed by rescaling it to
have fixed size. Under this rescaling, the nearly parallel constant
tends to zero and the equations converge to the torsion-free
$\G2$-equations. The resulting limit is the AC
member of the $C_7$ family constructed in \cite{FHN}. Up to conjugation its singular orbit also has
slope $(1,-1)$, and its asymptotic cone agrees with $C(N)$. Thus the AC $C_7$ metric is precisely the one that incorporates the
bubble that replaces the conical tip. We obtain

\medskip
\noindent
\textbf{Theorem \ref{thm:local-desingularisation}} \textit{Consider the locally homogeneous 
nearly K\"ahler manifold $N=(S^3\times S^3)/\mathbb Z_4$.
There exists a one-parameter family of cohomogeneity-one nearly parallel $\G2$-structures that closes
smoothly on a singular orbit of slope $(1,-1)$ and degenerates to $SC(N)$ with the
AC $C_7$ metric as its blow-up limit.}

\medskip
More precisely, the family converges smoothly to the sine cone away
from the collapsing singular orbit, while, after blowing up a
neighbourhood of that orbit, it converges smoothly on compact subsets
to the AC $C_7$ metric. The proof is formulated as a dynamical
matching problem near the cone common to these two limiting geometries.
After introducing scale-invariant variables and logarithmic radial
time, the evolution equations become autonomous and the torsion-free
cone becomes a hyperbolic fixed point. A similar 
approach to conical limits in cohomogeneity-one special-holonomy
geometry was used by Lehmann in the $\Spin(7)$ setting
\cite{LehmannGeometricTransitions}. The $C_7$ trajectory approaches
the fixed point along a stable direction, whereas the sine cone
trajectory leaves it along one of two unstable directions. The other
unstable direction obstructs the desired matching. Using the comparison
theory for the $C_7$ family developed in \cite{FHN}, we define a
scalar matching function that measures the component in this unwanted
direction and show that its derivative with respect to the $C_7$
parameter is non-zero at the AC solution. The implicit function theorem
then allows us to choose the parameter so that this component vanishes,
thereby selecting a desingularising family whose outgoing limit is the
sine cone trajectory.

The possibility of desingularising nearly parallel $\G2$-conifolds
by AC $\G2$-holonomy manifolds was investigated by
Schiemanowski \cite{Schiemanowski2022}. He derived a topological
obstruction to such constructions and showed that AC $\G2$-manifolds decaying faster than rate $-7/2$ cannot occur as bubbles. The AC $C_7$ metric has rate $-3$ and lies outside this non-existence regime. Our family realises this borderline picture, and smooths one vertex of the sine cone.  This is the first positive desingularisation result of this kind in nearly parallel
$\G2$-geometry and identifies the AC $C_7$ metric as a genuine
bubble arising from a degenerating family with non-zero nearly parallel
torsion. It provides a local prototype for constructing
compact nearly parallel $\G2$-manifolds by desingularising sine
cones. 

\paragraph{Organisation of the paper.} Our work divides into two complementary parts. The first concerns the global
shooting problem required to produce the new inhomogeneous nearly parallel $\G2$-metric on SO(5)/SO(3). The second studies a degenerating family of local
solutions and identifies the AC $C_7$ metric as the appropriate model appearing
at a collapsing singular orbit. We now describe the structure of the
paper and the main steps in these two threads. 

In Section~\ref{section:prelims} we first recall the description of nearly parallel $\G2$-structures as lifts of the nearly half-flat $\SU(3)$-structures induced on oriented hypersurfaces. This turns the SO(4)-invariant nearly parallel equation into an evolution problem on the principal orbits. We then review the cohomogeneity-one theory for $\SO(4)$, 
and the group diagrams, which determine both the smoothness conditions at the two ends and the topology of the compact manifolds obtained by reflection.

In Section~\ref{sec:open-orbit} we pass from the geometric problem to a system of ordinary differential equations \eqref{eqns:gen_ng2} whose solutions describe the invariant nearly parallel $\G2$-structure on $I\times SO(4)/\Z_2^2$ for an open interval $I\subset\R$. We first describe the space of 
invariant differential 3-forms on the principal orbit
$\SO(4)/\mathbb Z_2^2$, impose the nearly half-flat constraints, and
identify the domains in which the induced metric is positive
definite. The system possesses several discrete symmetries, some of which we record in Proposition~\ref{prop:discrete_symm}.  These symmetries play a crucial role in the reflection principle used to close a solution at its second singular orbit. 

Section~\ref{sec:smoothness} concerns the local problem near a singular orbit. Since the evolution equations become singular when one orbit direction collapses, smooth extension is a singular initial value problem. We use the Eschenburg--Wang smoothness criterion and a general singular IVP existence statement, Theorem~\ref{thm:IVPsoln}, to determine the admissible Taylor expansions for a solution to extend to a singular orbit. We obtain 3-parameter families of solutions extending to the relevant singular orbits with slopes $(1,-1)$ and $(3,-1)$.  Theorem~\ref{thm:complete-11} constructs two local families $\xi_{a,b,c},\widetilde\xi_{a,b,c}$ for slope $(1,-1)$, while 
Theorem~\ref{thm:complete-31} constructs the three-parameter family
$\eta_{a,c,\nu}$ for slope $(3,-1)$. It is the family
$\eta_{a,c,\nu}$ that we use to reflect so as to obtain the compact constructions.

In Section~\ref{sec:reflection} we attend to the global problem. The reflection principle Lemma~\ref{lemma:timereversing} gives a sufficient condition for a solution emanating from one singular orbit to give a complete solution precisely when it reaches the fixed-point set of a suitable time-reversing symmetry. This reduces the geometric existence problem to finding a zero of a finite-dimensional shooting map.

The main effort goes into establishing the existence of the required zeros, which we do by means of a rigorous computer-assisted argument. Because the equations are singular at the initial orbit $t=0$, they cannot be numerically integrated starting from $t=0$. Hence, we start the integration from a short positive time $t=\delta>0$.  We compute the Taylor expansion of the solution and rigorously bound the error terms. This gives an interval containing the solution and its parameter derivatives at $t=\delta$, where validated integration can begin.
Then we use the $C^1$-interval solver from CAPD to integrate the flow from $t=\delta$ up to the matching time. The final zero is certified using the Krawczyk inclusion criterion stated in Theorem~\ref{thm:krawczyk}. This produces the complete solution on the Berger space in
Theorem~\ref{thm:certified-berger}.
Proposition~\ref{prop:not-homogeneous-berger} distinguishes this solution from the classical homogeneous Berger metric, while
Proposition~\ref{prop:proper-solutions} shows that the structure is proper, or equivalently that the metric cone has full holonomy $\Spin(7)$. 

Section~\ref{sec:desingularisation} is devoted to the sine cone desingularisation. We begin by relating the $\xi_{a,b,c}$ family to the $C_7$ family of \cite{FHN} in Proposition~\ref{prop:c7-finite-quotient}. We then rescale the metric so that the singular orbit has fixed size. Under this rescaling the nearly parallel constant tends to zero, and Proposition~\ref{prop:inner-limit} shows that the rescaled solutions converge on compact subsets to members of the torsion-free $C_7$ family. To connect this inner limit to the sine cone geometry, we rewrite the equations in scale-invariant variables and logarithmic radial time. The common torsion-free cone then becomes a hyperbolic fixed point of the new system. The linearisation of the system at the cone fixed point then identifies the infinitesimal deformations in Lemma~\ref{lem:fixed-r0-modes}, and in the overlap region both the rescaled small-orbit
solutions and the sine cone solution enter a common neighbourhood of this fixed point, as shown in Lemma~\ref{lem:overlap}. The main issue is that a general trajectory can leave the cone in an unwanted unstable direction rather than following the
sine cone trajectory. Proposition~\ref{prop:exchange-nonzero} proves that varying the $C_7$ parameter moves the incoming trajectory transversely to this unwanted direction. In Theorem~\ref{thm:local-desingularisation} we choose the parameter so that the unwanted component vanishes, giving the desingularising family.

\paragraph{Acknowledgments.} This project grew out of work within the Simons Collaboration on Special Holonomy in Geometry, Analysis, and Physics (\#488635 Simon Salamon). Ragini Singhal is funded by the Deutsche Forschungsgemeinschaft (DFG, German Research Foundation) under Germany's Excellence Strategy EXC 2044 –390685587, Mathematics Münster: Dynamics–Geometry–Structure. R.S.\ would like to thank Christoph Böhm, Hans-Joachim Hein, Jason Lotay, Lorenzo Foscolo, and Marco Freibert for many fruitful conversations and suggestions about the project. Both authors would like to thank Daniel Platt for assistance with numerical methods and codes, and Shubham Dwivedi for careful reading of the manuscript.  

\paragraph{Use of generative AI.} ChatGPT 5.6 Sol was used to assist in the development of the computer programs and the preparation of the figures in this paper. The authors reviewed and tested all AI-assisted code and independently verified the resulting computations and figures, and take full responsibility for their correctness.

\section{Preliminaries}\label{section:prelims}

In this section, we discuss the setup for the study of a nearly parallel $\G2$-structure on a 7-dimensional compact manifold $M$ admitting a cohomogeneity-one action by a compact Lie group $G$. We can reduce attention to the \textit{nearly half-flat} $\SU(3)$-structures that will be induced on the invariant $6$-dimensional hypersurfaces of $M$. Nearly half-flat $\SU(3)$-structures were first introduced in \cite{nearlyhypo-nhf} in the context of
evolution equations on six-manifolds $M$ producing nearly parallel $\G2$-structures on the product of $M$ and an interval. The authors of \cites{cvlt-liftnhf,liftingsu3tonhf,Fabian-thesis,Conti-embedding} showed that one can construct nearly parallel $\G2$-structures by lifting certain nearly half-flat structures, by analogy with Hitchin's result \cite{Hitchin-stableforms} that a half-flat $\SU(3)$-structure on a six-dimensional manifold $N$ can be lifted to a parallel $\G2$-structure on $N\times(a,b)$ under certain conditions. The second author characterised the left invariant nearly half-flat structures on $S^3\times S^3$ \cite{SINGHAL-NHF}; as we shall see, this can be used to systematically analyse nearly parallel $\G2$-structures on $S^3\times S^3$ times an interval.

\subsection{Nearly Parallel $\G2$-structures}\label{section:g2-su3-prelim}

A $\G2$-structure on $M$ is defined by the reduction of the structure group of the frame bundle to the Lie group $\G2\subset \SO(7)$. A $\G2$-structure is characterized by a \textit{positive} $3$-form $\g2\in\Omega^3(M)$, meaning that $\g2$ lies in the open orbit with stabiliser $\G2$ at each point \cites{Gray1971,Salamon1989}. Such a structure exists if and only if the manifold is orientable and spin, conditions that are equivalent to the vanishing of the first and second Stiefel--Whitney classes respectively. The $3$-form $\g2$ induces a Riemannian metric $g_\g2$ in  a non-linear fashion, an orientation $\vol_\g2$ on $M$, and hence a Hodge
star operator $*_\g2$. We denote the Hodge dual $4$-form $*_{\g2}\g2$ by $\psi$. Pointwise, one has $\|\g2\|^2=\|\psi\|^2 = 7$, where the norm is taken with respect to $g_\g2$.  

The action of $\G2$ on the space of differential forms $\Omega^k$ induces splittings
\begin{align*}
\Omega^2&=\Omega^2_7\oplus \Omega^2_{14}, \\
\Omega^3&=\Omega^3_1\oplus \Omega^3_7\oplus \Omega^3_{27}.
\end{align*}
where
\begin{align*} 
 \Omega^2_7 &=\{X\lrcorner \g2\mid X\in \Gamma(TM)\} = \{\beta \in \Omega^2\mid *(\g2\wedge \beta)=2\beta\} , \\
 \Omega^2_{14} &=\{\beta \in \Omega^2(M)\mid \beta \wedge \psi =0 \} = \{\beta\in \Omega^2\mid *(\g2\wedge \beta)=-\beta\}. 
 \end{align*}
and
\begin{align*}
\Omega^3_1 &=\{ f\g2 \mid f\in C^{\infty}(M)\}, \\
\Omega^3_7 & = \{ X\lrcorner \psi \mid X\in \Gamma(TM)\} = \{*(\alpha \wedge \g2) \mid \alpha \in \Omega^1\}, \\
\Omega^3_{27} & = \{ \eta \in \Omega^3(M) \ \mid \ \eta\wedge \g2 = 0 = \eta\wedge \psi\}.
\end{align*}
The decompositions of $\Omega^4$ and $\Omega^5$ are obtained by taking the respective Hodge stars with respect to $*_\g2$. See \cite{Salamon1989}.

Given a $\G2$-structure $\g2$ on $M$, we can decompose $d\g2$ and $d\psi$ according to the above decomposition. This defines the \emph{intrinsic torsion forms}, which are unique differential forms $\tau_0 \in \Omega^0$, $\tau_1 \in \Omega^1$, $\tau_2 \in \Omega^2_{14}$ and $\tau_3 \in \Omega^3_{27}$ such that 
\begin{align*}
d\g2 &= \tau_0\psi + 3\tau_1\wedge \g2 + *_{\g2}\tau_3,  \\
d\psi &= 4\tau_1\wedge \psi + *_{\g2} \tau_2.
\end{align*}
\cites{bryantrmks,skflow}.
The full intrinsic torsion $\tau$ lives in the $49$-dimensional space 
\begin{align*}
    T^*M\otimes \mathfrak{g}_2^\perp \cong \mathcal{X}_0\oplus \mathcal{X}_1\oplus \mathcal{X}_2\oplus\mathcal{X}_3
\end{align*}
which has four irreducible $\G2$-modules. This gives rise to the sixteen classes of $\G2$-structures, and $\tau=0$ if and only if $d\g2=d\psi=0$ \cites{Fernandez-Gray, classification_G2}. A nearly parallel $\G2$-structure is characterized by
$0\ne\tau\in\mathcal{X}_0$, which is equivalent to
\begin{defn}
A $\G2$-structure $\g2$ is {\bf{nearly parallel }} if and only if there exists $\lambda\neq 0$ such that
\begin{align}
d\g2=\lambda\psi \ \ \ \ \textup{and} \ \ \ \ d\psi=0. \label{eq:ng2defn}
\end{align} 
\end{defn}

 The Lie group $\G2$ is the group of automorphisms of the octonions $\mathbb{O}$ that preserve the splitting $\mathbb{O}\cong \R+\rm{Im}\mathbb O$. One can define $\SU(3)$ as the subgroup of $\G2$ that preserves a fixed imaginary unit octonion. This fact indicates the presence of an $\SU(3)$-structure on a hypersurface of a manifold with a $\G2$-structure, and led Calabi \cite{Calabi-hypersurface} and Gray \cite{Gray-hypersurface} to study induced $\SU(3)$-structures on orientable hypersurfaces of $\rm{Im} \mathbb{O}$.   

An $\SU(3)$-structure on a $6$-dimensional manifold $M$ is defined by a pair $\omega,\Omega$ where $\omega$ is a non-degenerate  2-form and $\Omega=\Omega_++i\Omega_-$ is a complex $(3,0)$ form. These forms satisfy
\begin{enumerate}
    \item $\omega\wedge\Omega_\pm =0$,
    \item $\Omega_+\wedge \Omega_-=\frac{2}{3}\omega^3$.
\end{enumerate}
Let $I\subset\R$ be an interval. Given a one-parameter family of $\SU(3)$-structures $(\omega(t),\Omega(t))$ on $N$, one can define a $\G2$-structure $\g2$ on $I\times N$ by 
 \begin{align}
     \begin{split}\label{g2-eqns}
         \g2&=dt\wedge\omega(t)+\Omega_+(t)\\
         \psi=*_\g2\g2&=\textstyle\frac12\omega^2(t)-dt\wedge \Omega_-(t).
     \end{split}
 \end{align}

We are interested in the situation where the above $\G2$-structure on $I\times N$ is nearly parallel, that is $d\g2=\lambda\psi$ for some non-zero scalar constant $\lambda$. 
The $\G2$-structure $\g2$  defines a nearly $\G2$-structure if and only if 
\begin{align}
\begin{split}\label{SU3FORNG2}
d\Omega_+(t)&=\frac{\lambda}{2}\omega^2(t)\\
       d\omega(t)&=\Omega_+'(t)+{\lambda} \Omega_-(t),\\
        d\Omega_-(t)&=-\frac{(\omega^2)'}{2}.  
\end{split}
    \end{align}
The first condition describes the intrinsic torsion of the SU(3)-structure on the hypersurface $N$. This $\SU(3)$-structure was first described in \cite{nearlyhypo-nhf} and was called a nearly half-flat $\SU(3)$-structure.

\begin{defn}
  An $\SU(3)$-structure $(\omega,\Omega)$ on a 6-manifold $N$ is nearly half-flat if for some non-zero real constant $\lambda$
    \begin{align}\label{eqn:nhf}
        d\Omega_+&= \frac{\lambda}{2}\omega^2.
    \end{align}
\end{defn}

   If the scalar $\lambda=0$ in the above definition and $d\omega^2=0$, then the SU(3)-structure is called \textit{half-flat}. A 6-manifold $N$ with an SU(3)-structure that is (nearly) half-flat can (at least
if it is real analytic) be embedded in a manifold with (nearly) parallel $\G2$-structure. The simplest instance of this construction occurs when $N$ is nearly K\"ahler, in which case the
conical metric over $N$ has holonomy $\G2$. In fact, nearly half-flat is a slight generalisation of the half-flat situation to one in which the $3$-form $\Omega_+$ is not closed. 
    
\subsection{Cohomogeneity-one actions and group diagrams}\label{section:coho-1action}

Suppose that a compact Lie group $G$ acts with cohomogeneity one on a
connected compact manifold $M$. When the orbit space is a closed
interval, the action is described by a group diagram
\[
        H\subset K^-,K^+\subset G,
        \qquad K^\pm/H\simeq S^{\ell_\pm}.
\]
Here $H$ is the principal isotropy group and $K^\pm$ are the
isotropy groups of the two singular orbits. Conversely, such a diagram
determines a $G$-manifold, up to the usual equivalences of group
diagrams, by
\[
 M\simeq
 G\times_{K^-}D^{\ell_-+1}
 \mathop{\cup}_{G/H}
 G\times_{K^+}D^{\ell_++1}.
\]
Thus the topology of $M$ depends on the embeddings of $H$ and
$K^\pm$ in $G$, and not only on the abstract diffeomorphism types
of the singular orbits.

In this article, we focus on the cohomogeneity-one action by $\SO(4)$ on the principal $\rm{Sp}(1)$-bundles over $S^4$ as described in \cites{Verdiani-Podesta,GroveZiller2000,ziller,GWZ}. The bundles are classified by an element in $\pi_3(S^3)= \mathbb{Z}$. To describe the cohomogeneity-one action of $SO(4)$ on the $\Sp(1)$-bundle over $S^4$, we begin by describing the well-known
cohomogeneity-one action by SO(3) on $S^4$. 

Let $V\cong \mathrm{Sym}^2_0(\R^3)$ be the five-dimensional irreducible $\SO(3)$-representation on the space of traceless symmetric $3\times 3$ real matrices. The group $\SO(3)$ acts by conjugation on $V$, and the action descends to $S^4\subset V$. Every point in $S^4$ is conjugate to a matrix in $F = \{\operatorname{diag} (\lambda_1, \lambda_2, \lambda_3) \ | \ \sum_i\lambda_i=0, \ \sum_i \lambda_i^2=1\}$, and hence the quotient space is 1-dimensional. If the eigenvalues of $B\in V$ are distinct, then the stabilizer of $B$ is conjugate to the group $H\cong \Z_2\times \Z_2$ of diagonal matrices in $\SO(3)$. The exceptional orbits occur when two eigenvalues coincide. The singular orbits consist of those matrices for which the coincident eigenvalues are both positive or both negative, respectively. The singular isotropy group is $K\cong S(O(1)\times O(2))$, so each singular orbit is diffeomorphic to $\SO(3)/\operatorname{O}(2)\cong \mathbb{RP}^2$ and is a Veronese surface in $S^4$. 

Under the two-fold covering $\operatorname{Sp}(1)\to \SO(3)$, the group $H$ lifts to the quaternionic group $ Q_8 = \{\pm 1,\pm i, \pm j,\pm k\}$.  Writing
\[
S^1_i=\{e^{i\theta}:\theta\in\R\},
\qquad
S^1_j=\{e^{j\theta}:\theta\in\R\},
\]
the lifted group diagram of the action on $S^4$ is
\[
Q_8
\subset
\left\{S^1_iQ_8,S^1_jQ_8\right\}
\subset
\mathrm{Sp}(1).
\]
Here $S^1_{i,j}Q_8/Q_8\cong S^1$, corresponding to the two-dimensional normal
discs of the singular orbits.

We can now define the SO(4)-action on the bundle. We work with the double cover
\begin{align}\label{eqn:G_tilde}
    \pi\colon\widetilde G=\Sp(1)\times\Sp(1)\longrightarrow \SO(4).
\end{align}
For $u\in\{i,j,k\}$ and relatively prime integers $p,q$, set
\begin{align}\label{def:Cu_pq}
    C^u_{p,q}&=
 \left\{(e^{pu\theta},e^{qu\theta}):\theta\in\mathbb R\right\}
 \subset \widetilde G.
\end{align}
The superscript $u$ records the unit imaginary quaternion determining the maximal circle $S^1_u\subset\Sp(1)$, whereas the pair
$(p,q)$ records the weights in the two $\operatorname{Sp}(1)$-factors. We denote by
\begin{align}\label{eqn:K_pq}
    K^i_{p,q}&:=\pi(C^i_{p,q} H).
\end{align}

In all the examples considered below, the principal isotropy group is a copy of the quaternion group $Q_8$, and the identity components of the singular isotropy groups are circles of the form in \eqref{def:Cu_pq}.

\paragraph{The Grove--Wilking--Ziller family \cite{GWZ}.}
Set
\begin{align*}
    H_P&=\langle(i,-i),(j,j)\rangle
 =
 \{\pm(1,1),\pm(i,-i),\pm(j,j),\pm(k,-k)\}.
\end{align*}

The manifolds $P_k$ are defined by
\begin{align*}
    H_P\subset
 &\left\{
 C^i_{1,-1}H_P,\,
 C^j_{2k-1,\,-2k-1}H_P
 \right\}
 \subset\Sp(1)\times\Sp(1),
 \qquad k\geq 1.
\end{align*}
The two collapsing circles lie on different quaternionic axes; hence, the two singular isotropy
groups are not contained in a common proper subgroup of
$\Sp(1)\times\Sp(1)$ and the action is primitive. The manifolds $P_k$ are 2-connected and
are equivariantly diffeomorphic to the universal covers of the
3-Sasakian manifolds associated with the Hitchin orbifolds with a transverse $\Z_{2k-1}$ singularity. When $k=1$, we obtain the $3$-Sasakian $S^7$. 

\paragraph{The Berger space $B^7$.}
For the Berger space, let
\begin{align*}
    H_B&=\Delta Q_8=\langle(i,i),(j,j)\rangle
 =
 \{\pm(1,1),\pm(i,i),\pm(j,j),\pm(k,k)\}.
\end{align*}

The lifted diagram of its cohomogeneity-one $\SO(4)$-action is
\begin{align*}
     H_B & \subset
 \left\{
 C^i_{3,-1}H_B,\,
 C^j_{1,-3}H_B
 \right\}
 \subset\Sp(1)\times\Sp(1).
\end{align*}

Thus
\begin{align*}
    B^7&=\frac{\SO(5)}{\SO(3)}
 \simeq
 M\bigl(H_B;C^i_{3,-1}H_B,C^j_{1,-3}H_B\bigr).
\end{align*}

In \cite{Hoelscher2010}, Hoelscher described all the possible cohomogeneity-one actions on compact simply connected 7-manifolds. The family $P_k$ and the Berger space appear there as primitive manifolds, but the author also describes a family of non-primitive manifolds $N^7_A$ \cite{Hoelscher2010}. 

In Proposition \ref{prop:certified-tau23} we rediscover a complete nearly parallel $\G2$-structure on a member of this family, which we describe next. 

\paragraph{The non-primitive manifold $M_{23}$.}
Consider the simply connected cohomogeneity-one manifold defined by the
group diagram
\[
 \Delta\mathbb Z_4
 \subset
 \left\{C^i_{3,-1},C^i_{1,-3}\right\}
 \subset S^3\times S^3.
\]
This is a member of the non-primitive family $N_A^7$ in Hoelscher's classification \cite{Hoelscher2010}, equivariantly diffeomorphic to the Sasaki--Einstein manifold $A^{1,3,2}$ of \cite{TomasielloZaffaroni2011}.

The manifold $A^{1,3,2}$ admits a free involution $\iota$ compatible with the cohomogeneity-one action. We denote its quotient by
\[
 M_{23}\coloneqq A^{1,3,2}/\langle\iota\rangle.
\]
It is a rational homology sphere with fundamental group $\mathbb Z_2$, and the quotient action has
effective group diagram
\[
 \mathbb Z_2^2
 \subset
 \left\{K^i_{3,-1},K^i_{1,-3}\right\}
 \subset\SO(4).
\]
Topologically, it is an $\mathbb{RP}^3$-bundle over the Grassmannian $\operatorname{Gr}_2(\mathbb R^4)\cong (S^2\times S^2)/\Delta\Z_2$ of non-oriented subspaces.

The above diagram should be distinguished from the Berger diagram: in the Berger case the two collapsing circles $S_i^1$ and $S_j^1$ are not contained in a common maximal torus, whereas for $M_{23}$ they lie on the same maximal torus $S^1_i\times S^1_i$.

\begin{rem}
    We use a different embedding of $Q_8\in \Sp(1)\times \Sp(1)$ from \cite{GWZ}. One can obtain the conventions used in \cite{GWZ} by conjugating the second $\Sp(1)$ factor by $k$. This sends $H_P \to \Delta Q_8$ and $H_B \to \langle (i,-i),(j,-j)\rangle$. For the singular isotropy groups the conjugation sends
    \[
        C^u_{p,q}\longmapsto C^u_{p,-q}, \qquad u\in \operatorname{Im}\mathbb{H}.
        \]
\end{rem}

\section{Invariant $\G2$-structures on the regular part}
\label{sec:open-orbit}

An oriented hypersurface in a $7$-manifold with a $\G2$-structure
inherits an $\SU(3)$-structure. Thus, in order to describe the invariant $\G2$-structure on the open orbit $\SO(4)/\Z_2^2\times I$ for an open interval $I\subset \R$, we begin by describing the invariant
$\SU(3)$-structures on the principal orbit $\SO(4)/\mathbb Z_2^2$.
As seen in \Cref{section:g2-su3-prelim}, an oriented hypersurface of a nearly parallel $\G2$-manifold $(M^7,\g2)$ is induced by a nearly half-flat SU(3)-structure $(\omega,\Omega)$ defined in \eqref{eqn:nhf}. We can lift a family of real analytic nearly half-flat SU(3)-structures on $N^6$ to a nearly parallel $\G2$-structure on $N^6\times I$ if they satisfy \eqref{SU3FORNG2}.

If we denote by $S_{i,j}$ the symmetric $3\times 3$ matrix with 1 in the $(i,j)$ and $(j,i)$ entries and 0 elsewhere, then \begin{align*}
    E_1&:=\frac{1}{\sqrt{6}}\rm{diag}(1,1,-2), \quad
     E_2:=\frac{1}{\sqrt{2}}\rm{diag}(1,-1,0), \quad
    E_3:=S_{12}, \quad E_4:=S_{13},\quad E_5:=S_{23}
\end{align*} defines a basis of $\mathrm{Sym}^2_0(\R^3)\cong \R^5$. Let $A\in\mathrm{Sym}^2_0(\R^3)$. By \cite{Hitchin_Einstein}, if the eigenvalues of $A$ are distinct, then the stabilizer of $A$ is conjugate to the group $\{a,b\in\Z_2 \ | \ \rm{diag}(a,b,ab)\}\subset\rm{SO}(3)$. We can define the subgroup $\rm{SO}(4)_{E_1}\subset \rm{SO}(5)$ as the subgroup preserving the $E_1$ direction in $\R^5$. The generic stabilizer group for $\SO(4)_{E_1}$ is also given by $\Z_2^2\cong \rm{diag}(1,1,ab,b,a)$, which preserves the $E_1,E_2$ directions in $\R^5$. The generic orbit is therefore given by $\rm{SO}(4)/\Z_2^2$. The stabilizer of the singular orbit is the group $K^\pm\cong \rm{O}(2)$ such that $K_0^\pm \cong \rm{SO}(2)$. 

The tangent space at a point of a principal orbit $\SO(4)/\Z_2^2$ is identified with \[\mathfrak{so}(4)=\mathfrak{su}(2)_+\oplus\mathfrak{su}(2)_-
\cong\Lambda^2_+\oplus\Lambda^2_-.\] Following \cite{Madsen-Salamon}, we choose bases $\{e^1,e^3,e^5\}$ and $\{e^2,e^4,e^6\}$ for $\mathfrak{su}(2)_+$ and $\mathfrak{su}(2)_-$ respectively.
Let $f^1,\dots,f^4$ be the dual
basis on $\operatorname{span}\{E_2,E_3,E_4,E_5\}$, define 
\begin{align}\label{basis6}
   e^1&:=f^{12}+f^{34}, \quad e^3:=f^{13}-f^{24}, \quad e^5:=f^{14}+f^{23},\\
   e^2&:=f^{12}-f^{34}, \quad e^4:=f^{13}+f^{24}, \quad e^6:=-f^{14}+f^{23}
\end{align}
where $f^{ij}=f^i\wedge f^j$. The Maurer--Cartan equations give
\begin{align*}
    de^1=e^{35},\ de^2=e^{46},\ de^3=-e^{15},\ de^4=-e^{26},\ de^5=e^{13},\ de^6=e^{24}.
\end{align*}
The action of $\mathbb Z_2^2$ on $\mathbb R^4$ induces one on
$\Lambda^*(\mathbb R^4)$.  The invariant $2$-forms are spanned by
$e^{12},e^{34},e^{56}$, while the invariant $3$-forms are spanned by
$e^{135},e^{136},e^{145},e^{146},e^{235},e^{236},e^{245},e^{246}$.

An $\SU(3)$-structure on a 6-dimensional manifold is determined by a stable real
$3$-form $\Omega_+$ and a non-degenerate real $2$-form $\omega$ with
$\omega\wedge\Omega_+=0$.  The form $\Omega_+$ determines an almost
complex structure $J$ and the companion form
$\Omega_-=J\Omega_+$, so that $\Omega=\Omega_++i\Omega_-$ has type
$(3,0)$.  The volume matching and positivity conditions are
\begin{align*}
    3\Omega_+\wedge \Omega_-&=2\omega^3, \quad \omega(\cdot,J\cdot)>0.
\end{align*}
On a principal orbit $\{t\}\times \SO(4)/\Z_2^2$, a $\Z_2^2$-invariant $\SU(3)$-structure is given by
\begin{align}
    \label{eqn:omega_nhf}\omega&=p_1 e^{12}+p_2e^{34}+p_3e^{56},\\
   \label{eqn:omega_plus_nhf} \Omega_+&=q_1 e^{135}+q_2 e^{136}+q_3 e^{145}+q_4 e^{146}+q_5 e^{235}+q_6 e^{236}+q_7 e^{245}+q_8 e^{246}
\end{align}
where $p_1,p_2,p_3$ and $q_1,\dots,q_8$ are functions of $t$.  In
dimension six the pair $(\omega,\Omega_+)$ determines the full
$\SU(3)$-structure.  For notational convenience, set
\begin{align}\begin{split}\label{eq:alpha_beta_defn}
    \alpha_i&\coloneqq q_iq_{9-i}, i=1\dots 4,\\
    \beta_1&\coloneqq q_1q_4q_6q_7,\\
    \beta_2&\coloneqq q_2q_3q_5q_8.
\end{split}
    \end{align}
The condition $\omega\wedge\Omega=0$ is already satisfied and  the normalization condition $\Omega_+\wedge\Omega_-=2/3\omega^3$ implies that 
\begin{align}\label{eqn:normalization}
        \alpha&=-4(p_1p_2p_3)^2
    \end{align}
where 
\[\alpha:=\sum_{i,j=1,i<j}^4 (\alpha_i-\alpha_j)^2 - 2\sum_{i=1}^4\alpha_i^2+4(\beta_1+\beta_2).\]

The $3$-form $\Omega_-$ is then given by
\begin{align}
\begin{split}\label{eq:omega_minus_nhf}
    \Omega_-=& \frac{p_1p_2p_3}{\alpha}\Big(\frac{2\alpha_1(2\alpha_1-\4)+4\beta_2}{ q_8}e^{135}-\frac{2\alpha_1(2\alpha_1-\4)+4\beta_1}{ q_1}e^{246} \\
    &+\frac{2\alpha_2(2\alpha_2-\4)+4\beta_2}{ q_2}e^{245}-\frac{2\alpha_2(2\alpha_2-\4)+4\beta_1}{ q_7}e^{136}\\
    &+\frac{2\alpha_3(2\alpha_3-\4)+4\beta_2}{ q_3}e^{236}-\frac{2\alpha_3(2\alpha_3-\4)+4\beta_1}{ q_6}e^{145}\\
    &+\frac{2\alpha_4(2\alpha_4-\4)+4\beta_2}{ q_5}e^{146}-\frac{2\alpha_4(2\alpha_4-\4)+4\beta_1}{ q_4}e^{235}\Big).
\end{split}
\end{align}
where $\4$ is an abbreviation for the sum $\sum_{i=1}^4\!\alpha_i$.\\

Using the exterior derivatives of the one-forms on $\{t\}\times M$, we can compute 
\begin{align*}
    d\Omega_+&=-(q_2+q_7)e^{1234}+-(q_3+q_6)e^{1256}+-(q_4+q_5)e^{3456},
\end{align*}
thus the nearly half-flat condition \eqref{eqn:nhf} implies
\begin{align}
    \begin{split}\label{eqn:NHFconstraints}
    q_2+q_7&=- \lambda p_1p_2,\\
     q_3+q_6&= -\lambda p_1p_3,\\
        q_4+q_5&= -\lambda p_2p_3.
        \end{split}
\end{align}

\begin{rem}
    Left invariant nearly half-flat $\SU(3)$-structures on $S^3\times S^3$ were characterized in \cite{SINGHAL-NHF}. Any nearly half-flat $\SU(3)$-structure on $S^3\times S^3$ can be obtained by two $3\times 3$ real matrices $P,Q \in {\rm{Mat}_{3,3}(C^\infty(S^3\times S^3))}$ and two functions $a,b$.  The enhanced symmetry in the above system implies that the matrices $P,Q$ are diagonal and hence we get an $8$-parameter system. 
\end{rem}

Using the $\SU(3)$-structure on the principal orbit $M:=\SO(4)/\Z_2^2$ one can obtain  a $\G2$-structure $(\g2,\psi)$ on $(0,\pi/3)\times M$ by
\begin{align}
  \label{eq:phifromsu3}  \g2&=\Omega_++dt\wedge \omega,\\
 \label{eq:psifromsu3}   \psi=*\g2&= \frac{1}{2}\omega^2-dt\wedge \Omega_-.
\end{align}
From \eqref{eqn:omega_nhf},\eqref{eqn:omega_plus_nhf}, 
\begin{equation}\label{eq:gen-phi}
  \begin{aligned}
         \g2=&(p_1e^{12}+p_2e^{34}+p_3e^{56})\wedge dt \\ &+ q_1e^{135}+q_2e^{136}+q_3e^{145}+q_4e^{146}+q_5e^{235}+q_6e^{236}+q_7e^{245}+q_8e^{246}
         \end{aligned}
     \end{equation} 
where the coefficients $p_i,q_i$ are functions of $t\in (0,\pi/3)$.
Using the $3$-form above, one can compute the symmetric bilinear tensor 
\[(B_{\g2})_{ij}\vol=-\frac{1}{6}(e_i\lrcorner\g2)\wedge(e_j\lrcorner\g2)\wedge\g2.\]
The $3$-form $\g2$ defines a $\G2$-structure if and only if $B_\g2$ is positive definite. If it is positive definite, we obtain the metric $g_\g2$ induced by $\g2$ as 
\[g_\g2=\frac{B_\g2}{\det{B_\g2}^\frac{1}{9}}.\]

Using the notation in \eqref{eq:alpha_beta_defn}, the symmetric bilinear form 
$B_\g2={\rm{diag}}(B_1,B_2,B_3,-p_1p_2p_3)$ is given by
\begin{align*}
B_1&=\left(\begin{array}{c c}
    p_1(q_1q_4-q_2q_3) &\cfrac{p_1(\alpha_1+\alpha_4-\alpha_2-\alpha_3)}{2}\\
    \cfrac{p_1(\alpha_1+\alpha_4-\alpha_2-\alpha_3)}{2}& p_1(q_5q_8-q_6q_7)
\end{array}\right),\\ B_2&=\left(\begin{array}{c c}
    p_2(q_1q_6-q_2q_5) &-\cfrac{p_2(\alpha_2+\alpha_4-\alpha_1-\alpha_3)}{2}\\
     -\cfrac{p_2(\alpha_2+\alpha_4-\alpha_1-\alpha_3)}{2}&  p_2(q_3q_8-q_4q_7)
\end{array}\right), \\ B_3&=\left(\begin{array}{c c}
    p_3(q_1q_7-q_3q_5) &-\cfrac{p_3(\alpha_3+\alpha_4-\alpha_1-\alpha_2)}{2}\\
     -\cfrac{p_3(\alpha_3+\alpha_4-\alpha_1-\alpha_2)}{2}&  p_3(q_2q_8-q_4q_6)
\end{array}\right).
    \end{align*}
The determinant of $B_\g2$ is given by 
    \[\det(B_\g2)=-\left(\frac{p_1p_2p_3\alpha}{4}\right)^3.\]
    The normalisation condition \eqref{eqn:normalization} on the $\SU(3)$-structure on $\SO(4)/\Z_2^2$ implies that 
       \begin{align*}
        \det(g_\g2)&= \det(B_\g2)^{1/9}=|p_1p_2p_3|.
    \end{align*}

In the orientation and square-root convention used throughout the
shooting argument, the Riemannian chamber is characterised by
$p_1p_2p_3<0$, or equivalently by
\begin{equation}\label{eq:riemannian-chamber}
 q_2+q_7>0,\qquad q_3+q_6>0,\qquad q_4+q_5<0.
\end{equation}
Reversing the transverse orientation changes all three $p_i$ and gives
the equivalent convention $p_1p_2p_3>0$.  The Hodge
dual $4$-form $\psi=*_{\varphi}\varphi$ is
\begin{equation}
\begin{aligned}
    \label{eq:gen-psi}\psi\;=&\;p_1p_2 e^{1234}+p_1p_3e^{1256}+p_2p_3e^{3456}\\
    &-\frac{\alpha_1(2\alpha_1-\4)+2\beta_2}{2p_1p_2p_3q_8}e^{1357}+\frac{\alpha_1(2\alpha_1-\4)+2\beta_1}{2p_1p_2p_3 q_1}e^{2467} \\
    &-\frac{\alpha_2(2\alpha_2-\4)+2\beta_2}{2p_1p_2p_3 q_2}e^{2457}+\frac{\alpha_2(2\alpha_2-\4)+2\beta_1}{2p_1p_2p_3 q_7}e^{1367}\\
    &-\frac{\alpha_3(2\alpha_3-\4)+2\beta_2}{2p_1p_2p_3 q_3}e^{2367}+\frac{\alpha_3(2\alpha_3-\4)+2\beta_1}{2p_1p_2p_3 q_6}e^{1457}\\
    &-\frac{\alpha_4(2\alpha_4-\4)+2\beta_2}{2p_1p_2p_3q_5}e^{1467}+\frac{\alpha_4(2\alpha_4-\4)+2\beta_1}{2p_1p_2p_3 q_4}e^{2357}.
\end{aligned}
\end{equation}

The torsion of the $\G2$-structure is given by
\begin{align}
\begin{split}\label{eq:dphi}
    d\g2&=d\Omega_++dt\wedge(\dot{\Omega}_+ -  d\omega),\\
    &=(q_7-q_2)e^{1234}+(q_6-q_3)e^{1256}+(q_4-q_5)e^{3456}-q_1'e^{1357}-q_8'e^{2467}-(-p_3+q_2')e^{1367}\\
    &-(p_3+q_7')e^{2457}-(-p_2+q_3')e^{1457}-(p_2+q_6')e^{2367}-(-p_1+q_5')e^{2357}-(p_1+q_4')e^{1467}.
    \end{split}
    \end{align}
    \begin{align}
    \begin{split}\label{eq:dpsi}
        d\psi&= d\omega\wedge\omega+dt\wedge ( \dot\omega\wedge\omega+d\Omega_-)\\
        &=\left\{\left(\frac{\alpha p_1p_2p_3 }{4}\right)^{1/3}\left(-\frac{p_3'}{p_3^2}+\frac{2(q_2-q_7)\left(2\alpha_2-\4\right)}{\alpha}+\frac{2(\beta_1-\beta_2)}{\alpha_2}\right)+\frac{(\alpha p_1p_2p_3)'}{3p_3(2\alpha p_1p_2p_3)^{2/3}}\right\}e^{12347}\\
    &+\left\{\left(\frac{\alpha p_1p_2p_3 }{4}\right)^{1/3}\left(-\frac{p_2'}{p_2^2}+\frac{2(q_3-q_6)\left(2\alpha_3-\4\right)}{\alpha}+\frac{2(\beta_1-\beta_2)}{\alpha_3}\right)+\frac{(\alpha p_1p_2p_3)'}{3p_2(2\alpha p_1p_2p_3)^{2/3}}\right\}e^{12567}\\
    &+\left\{\left(\frac{\alpha p_1p_2p_3 }{4}\right)^{1/3}\left(-\frac{p_1'}{p_1^2}+\frac{2(q_5-q_4)\left(2\alpha_4-\4\right)}{\alpha}+\frac{2(\beta_1-\beta_2)}{\alpha_4}\right)+\frac{(\alpha p_1p_2p_3)'}{3p_1(2\alpha p_1p_2p_3)^{2/3}}\right\}e^{34567}.
    \end{split}
    \end{align}

The $\G2$-structure $(\g2,\psi)$ is torsion-free if it satisfies $d\g2=0$ and $d\psi=0$. Assuming $d\g2=0$, we obtain
\begin{subequations}\label{eqns:gen_tfg2}
\begin{equation}
p_1=-q_4',\ \ p_2=q_3', \ \ p_3=q_2'
\end{equation}
\begin{equation}
q_1'=0, \ \ q_8'=0,\ \ q_5=-q_4,\ \ q_6=-q_3,\  \ q_7=-q_2.
\end{equation}
\end{subequations}
The equation $d\psi=0$ is a non-linear second order ODE. 

\subsection{The nearly parallel evolution system}
If $(\g2,\psi)$ satisfies $d\g2=\lambda\psi$ for a non-zero scalar
$\lambda$, then the $\SU(3)$-structure on each principal orbit
$\{t\}\times M$ is nearly half-flat and the family satisfies
\eqref{SU3FORNG2}.

For $\alpha_i=q_iq_{9-i}, i=1\dots 4$, $\beta_1=q_1q_4q_6q_7, \beta_2=q_2q_3q_5q_8$, and $\4=\sum_{i=1}^4\!\alpha_i$, the ODEs for $q_i$ are given by 
\begin{subequations}\label{eqns:gen_ng2}
\begin{equation}
    q_1'=\frac{\lambda}{2p_1p_2p_3q_8} (\alpha_1(2\alpha_1-\4)+2\beta_2)
\end{equation}
\begin{equation}
    q_8'=\frac{-\lambda}{2p_1p_2p_3q_1} (\alpha_1(2\alpha_1-\4)+2\beta_1)
\end{equation}
\begin{equation}
    q_2'=p_3-\frac{\lambda}{2p_1p_2p_3q_7} (\alpha_2(2\alpha_2-\4)+2\beta_1)
\end{equation}
\begin{equation}
    q_7'=-p_3+\frac{\lambda}{2p_1p_2p_3q_2} (\alpha_2(2\alpha_2-\4)+2\beta_2)
\end{equation}
\begin{equation}
    q_3'=p_2-\frac{\lambda}{2p_1p_2p_3q_6} (\alpha_3(2\alpha_3-\4)+2\beta_1)
\end{equation}
\begin{equation}
    q_6'=-p_2+\frac{\lambda}{2p_1p_2p_3q_3} (\alpha_3(2\alpha_3-\4)+2\beta_2)
\end{equation}
\begin{equation}
    q_4'=-p_1+\frac{\lambda}{2p_1p_2p_3q_5} (\alpha_4(2\alpha_4-\4)+2\beta_2)
\end{equation}
\begin{equation}
    q_5'=p_1-\frac{\lambda}{2p_1p_2p_3q_4} (\alpha_4(2\alpha_4-\4)+2\beta_1).
\end{equation}
\end{subequations}

Let the constraint set be
\begin{equation}\label{eq:constraint-set}
 \mathcal C=\Bigl\{(p,q)\in\mathbb R^3\times\mathbb R^8:
 q_2+q_7=-\lambda p_1p_2,\quad q_3+q_6=-\lambda p_1p_3,
 \quad q_4+q_5=-\lambda p_2p_3\Bigr\}.
\end{equation}
On the open set where $p_1p_2p_3\neq0$, and in the chamber used
throughout this paper, the $p_i$ are recovered from $q$ by
\begin{equation}\label{eq:p-from-q}
p_1=-\sqrt{\cfrac{-(q_2+q_7)(q_3+q_6)}{\lambda(q_4+q_5)}},\quad
p_2=\sqrt{\cfrac{-(q_2+q_7)(q_4+q_5)}{\lambda(q_3+q_6)}},\quad
p_3=\sqrt{\cfrac{-(q_3+q_6)(q_4+q_5)}{\lambda(q_2+q_7)}}.
\end{equation}
The evolution system also has the discrete symmetries listed next.  The
first two are general, while the last two reflect the geometry of the
present principal and singular orbits.

\begin{prop}\label{prop:discrete_symm}
    The system \eqref{eqns:gen_ng2} is invariant under the following symmetries:
    \begin{enumerate}
        \item $\tau_1 \colon$ time translation, $t\leftrightarrow t+t_0$ for some fixed $t_0\in\R$. 
        \item $\tau_2 \colon$ \textit{time reversal}, $t\leftrightarrow -t$, and $p_i\leftrightarrow -p_i$ for $i=1,2,3$. 
        \item $\tau_3 \colon$ \textit{$\rm{Sp}(1)$-flip}, $p_i\leftrightarrow-p_i$ for $i=1,2,3$, and $q_{i}\leftrightarrow q_{9-i}$ for $i=1,\dots,8$. 
        \item $\tau_4 \colon$ \textit{$Q_8$-axis flip}, $t\leftrightarrow -t$,  $p_1\leftrightarrow-p_3$, $q_1\leftrightarrow q_8$, $q_3\leftrightarrow q_6$, $q_2\leftrightarrow-q_4$, $q_7\leftrightarrow -q_5$.
    \end{enumerate}
\end{prop}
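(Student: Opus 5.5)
The plan is to verify each symmetry by direct substitution into the system \eqref{eqns:gen_ng2}, together with the constraint set $\mathcal C$ of \eqref{eq:constraint-set}, which the flow must preserve. First I record how the auxiliary quantities transform: $\alpha_i=q_iq_{9-i}$, $\beta_1=q_1q_4q_6q_7$, $\beta_2=q_2q_3q_5q_8$, $\Sigma_{14}$ and $p_1p_2p_3$. For $\tau_1$ there is nothing to check, because the right-hand sides do not depend on $t$.

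For $\tau_2$, write $\tilde q(t)=q(-t)$ and $\tilde p(t)=-p(-t)$. Then $\tilde q_i'(t)=-q_i'(-t)$, while every term on the right-hand side changes sign. The terms $\pm p_k$ flip, and the terms $\lambda(\cdots)/(2p_1p_2p_3q_j)$ flip because $p_1p_2p_3$ flips and the $q$'s are unchanged. The constraints $q_2+q_7=-\lambda p_1p_2$ and its companions involve only products of two $p$'s, so they are preserved.

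For $\tau_3$, the swap $q_i\leftrightarrow q_{9-i}$ fixes each $\alpha_i$ and $\Sigma_{14}$, and it exchanges $\beta_1\leftrightarrow\beta_2$. The $q_1'$ equation, which involves $q_8$ and $\beta_2$, becomes the $q_8'$ equation once the sign flip of $p_1p_2p_3$ is taken into account, and the pair $(q_1,q_8)$ is checked in the same way. The pairs $(q_2,q_7)$, $(q_3,q_6)$ and $(q_4,q_5)$ map into each other, with the linear terms $\pm p_k$ also flipping. The constraint sums $q_2+q_7$ and the others are invariant, as are the products $p_jp_k$.

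For $\tau_4$, I track the substitution $q_1\leftrightarrow q_8$, $q_3\leftrightarrow q_6$, $q_2\to -q_4$, $q_7\to-q_5$, $q_4\to -q_2$, $q_5\to -q_7$. Under it $\alpha_1$ and $\alpha_3$ are fixed, $\alpha_2\leftrightarrow\alpha_4$, and $\Sigma_{14}$ is fixed. For the $\beta$'s:
\begin{itemize}
\item $\beta_1=q_1q_4q_6q_7\mapsto q_8(-q_2)q_3(-q_5)=\beta_2$, so $\beta_1\leftrightarrow\beta_2$;
\item $p_1\leftrightarrow -p_3$ with $p_2$ fixed, so $p_1p_2p_3$ is preserved;
\item combined with $t\to -t$, the left-hand sides acquire a minus sign.
\end{itemize}
The checks then run as follows. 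The $q_1'$ equation (with $q_8,\beta_2$) maps to minus the $q_8'$ equation (with $q_1,\beta_1$), which is consistent. The $q_2'$ equation, $q_2'=p_3-\lambda(\alpha_2(2\alpha_2-\Sigma_{14})+2\beta_1)/(2Pq_7)$ with $P=p_1p_2p_3$, becomes after substitution $q_4'=-p_1+\lambda(\alpha_4(\cdots)+2\beta_2)/(2Pq_5)$, which is exactly the $q_4'$ equation; the two sign changes from $t\mapsto -t$ and $q\mapsto -q$ cancel appropriately. The remaining pairs are handled similarly. For the constraints, $q_2+q_7=-\lambda p_1p_2$ maps to $-(q_4+q_5)=\lambda p_3p_2$, which matches the third constraint, and the $q_3+q_6$ constraint is preserved since $p_1p_3$ is fixed.

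The main obstacle is the sign and index bookkeeping for $\tau_4$, particularly in the $q_3,q_6$ equations, where the linear term $p_2$ is unchanged while $t$ reverses. I expect the swap $q_3\leftrightarrow q_6$ together with $\beta_1\leftrightarrow\beta_2$ to turn the $q_3'$ equation into minus the $q_6'$ equation, and I would verify this line by line, ideally with a symbolic computation as a cross-check.
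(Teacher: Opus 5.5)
Your direct-substitution check is correct, including the $\tau_4$ bookkeeping: with $p_2$ fixed, $p_1p_2p_3$ is preserved, $\alpha_2\leftrightarrow\alpha_4$, $\beta_1\leftrightarrow\beta_2$, and the transformed $q_3'$ equation is exactly minus the $q_6'$ equation, as you expected. This is essentially the paper's approach, since the paper gives no written proof beyond noting that each map preserves $\mathcal C$ and that $\tau_2,\tau_4$ reverse time, so your line-by-line verification supplies the omitted details.
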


\noindent Each map preserves $\mathcal C$.  The involutions $\tau_2,\tau_4$ are
time-reversing, while $\tau_1,\tau_3$ preserve time orientation.
Geometrically, $\tau_4$ sends 
$C^i_{p,q}\to C^j_{-q,-p}$, whereas $\tau_3$ exchanges the two $S^3$
factors in the principal orbit and sends $C^u_{p,q} \to C^u_{q,p}$ for $u\in\{i,j,k\}$.
\smallskip

There are some known solutions to \eqref{eqns:gen_ng2} satisfying $\mathcal{C}$. We discuss them briefly here. 
    
\paragraph{Sine cone with a nearly K\"ahler link.} An $\SU(3)$-structure $(\omega,\Omega)$ is strictly nearly K\"ahler if for some non-zero scalar $\mu\in\R$, $d\omega=3\mu\Omega_-, d\Omega_+=-2\mu\omega^2.$
Thus nearly K\"ahler is a distinguished subclass of nearly half-flat. 
    The canonical $\SO(4)/\Z_2^2$-invariant nearly K\"ahler $\SU(3)$-structure is given by 
    \begin{align*}
        \omega&= e^{12}+e^{34}+e^{56},\\
        \Omega_+&= e^{135}+\frac{1}{2}(e^{136}-e^{145}+e^{146}+e^{235}-e^{236}+e^{245})+e^{246},\\
        \Omega_-&= \frac{\sqrt3}{2}(e^{136}-e^{145}-e^{146}+e^{235}+e^{236}-e^{245}).
    \end{align*}
 The conical metric on $C(N)=(0,\infty)\times N$ over a nearly K\"ahler manifold $N$ has holonomy $\G2$, and (unless $N=S^6$) there is a conical vertex at $t=0$. The sine cone $SC(N)=(0,\pi)\times N$ carries a nearly parallel $\G2$-structure which 
 (unless $N=S^6$) has two vertices at the two end points.

For the nearly K\"ahler structure on $N=\SO(4)/\Z_2^2$, the nearly parallel $\G2$-structure on $SC(N)$ satisfying $d\g2_{sc}=\lambda *_{sc}\g2_{sc}$ is given by 
\begin{align}\label{eqn:sine cone}
    \varphi_{sc}&= \frac{32\sqrt{3}}{9\lambda^3}\sin^2(t) dt\wedge\omega + \frac{64}{27\lambda^3}\sin^3(t) (\sin(t)  \Omega_++\cos(t) \Omega_-)
\end{align}

\paragraph{The homogeneous nearly parallel $\G2$-structure on 
squashed S$^7$.}

In \cite{3-Sasakian_paper} the orthonormal basis for the invariant nearly parallel $\G2$-structure on squashed $S^7$ satisfying $d\g2=\frac{6}{\sqrt{5}}\psi$ is given by $\{F_i,i=1\dots 6,dt\}$ where 
\[
\left(\ns\ba{c} F_{2k-1}\y F_{2k}\ea\ns\right) =
\left(\ba{cc} -\frac{1}{\sqrt{5}} & -\frac{1}{\sqrt{5}}(2c_k+1)\yy 0 & -2s_k\ea\right)
\left(\ns\ba{c} e_{2k-1}\y e_{2k}\ea\ns\right)\]
where $c_k=\cos(t+\phi_k), s_k=\sin(t+\phi_k)$ and $\phi_k=(2k-2)\pi/3$ for $k=1,2,3$. The $\SU(3)$-structure $(\omega,\Omega)$ on the hypersurface is given by
\begin{align}
\begin{split}\label{eqn:S7-ortho}
     \omega&= F_1\wedge F_2+F_3\wedge F_4+F_5\wedge F_6,\\
    \Omega&= (F_1+iF_2)\wedge(F_3+iF_4)\wedge (F_5+iF_6).
\end{split}
   \end{align}

The nearly parallel $\G2$-structure is then given by
\begin{align*}
    \g2&=dt\wedge\omega+{\rm{Re}}\Omega.
\end{align*}
The singularities occur at $t=0, \pi/3$, and the singular isotropy groups are $K^i_{3,-1},K^j_{1,-1}\cong S^1\rtimes \Z_2$, respectively. 

\paragraph{The homogeneous nearly parallel $\G2$-structure on Berger.} 
This example motivates our construction of complete $\SO(4)$-invariant cohomogeneity-one
nearly parallel manifolds.  The nearly parallel $\G2$-structure on $B=SO(5)/SO(3)$ is described in detail in
\cite{Ball-Madnick,3-Sasakian_paper,SINGHAL-NHF}; hence, here we record only the 3-form $\g2$ in our notation. The $\G2$-structure below satisfies $d\g2=\frac{6}{\sqrt{5}}\psi$.

\begin{align}
\begin{split}\label{eq:hom_ng2}
\g2&=(\sin(t)e^{12}-\sin(t-2\pi/3)e^{34}-\sin(t+2\pi/3)e^{56})\wedge e^7\\
&+\frac{1}{20}(8\cos(t-2\pi/3)\cos(t)\cos(t+2\pi/3)(e^{135}+e^{246})-7(e^{135}-e^{246}))\\
&+\frac{1}{5}\sin(t)\sin(t-2\pi/3)
  \bigl(-2\cos(t+2\pi/3)(e^{136}-e^{245})-3(e^{136}+e^{245})\bigr)\\
&+\frac{1}{5}\sin(t)\sin(t+2\pi/3)
  \bigl(-2\cos(t-2\pi/3)(e^{145}-e^{236})-3(e^{145}+e^{236})\bigr)\\
&+\frac{1}{5}\sin(t-2\pi/3)\sin(t+2\pi/3)
  \bigl(-2\cos(t)(e^{146}-e^{235})+3(e^{146}+e^{235})\bigr).
\end{split}
\end{align}

The volume $\sqrt{\det g_\g2}$ is proportional to $\sin{3t}$, which vanishes at $t=0,\pi/3$, corresponding to the singular orbits.

\section{Smooth extension over the singular orbits}
\label{sec:smoothness}
In this section, we identify which solutions of \eqref{eqns:gen_ng2} extend to the singular orbits described in \Cref{section:coho-1action}. We apply the smoothness criterion of Eschenburg--Wang and refer the reader to \cite{Eschenburg-Wang, Haskins-Lorenzo} for more details. 

Let $\Sigma=G/K$ be a codimension-two singular orbit, let
$q=eK\in\Sigma$, and let $V\cong\mathbb R^2$ be its normal slice.
The group $K$ acts transitively on the unit circle in $V$, with
principal stabiliser $H$.  A tubular neighbourhood of $\Sigma$ is the
associated bundle
\begin{equation*}
 E=G\times_K V.
\end{equation*}
Choose an $\operatorname{Ad}(K)$-invariant complement
$\mathfrak g=\mathfrak k\oplus\mathfrak n$.  Restriction to the slice
identifies a $G$-invariant $3$-form on $E$ with a $K$-equivariant map
\begin{equation*}
 V\longrightarrow\Lambda^3(V\oplus\mathfrak n).
\end{equation*}
Let $W$ denote the space of equivariant maps from the unit circle to
$\Lambda^3(V\oplus\mathfrak n)$.  Evaluation at a fixed
$u_0\in S^1$ gives an isomorphism
    \begin{align*}
        \epsilon\colon W&\to \Lambda^3(V\oplus\n)^{\Z_2^2}\\
        w &\mapsto w(u_0).
    \end{align*}
Let $W_p\subset W$ be the subspace obtained by restricting homogeneous
equivariant polynomial maps of degree $p$ on $V$.  If the radial
restriction of an invariant form has Taylor series
$h_t\sim\sum_{p\ge0}h_pt^p$, then \cite[Lemma~1.1]{Eschenburg-Wang}
states that it extends smoothly across the zero section if and only if
\begin{equation}\label{eq:EW-smoothness}
 h_p\in\epsilon(W_p)\qquad\text{for every }p\ge0.
\end{equation}

See \Cref{section:coho-1action} for the cohomogeneity-one action of $\SO(4)$. On the double cover
$\Sp(1)\times\Sp(1)$ the singular isotropy group is of the form 
\begin{align*}
    C^u_{m,n}=\{(e^{u m\theta},e^{u n\theta}):\theta\in\R\},
\end{align*}
where $mn<0$ and $u\in {i,j}$.

If we denote by $K$ the singular isotropy group for the SO(4)-action, then
$K\cong S(O(2)O(1))\cong S^1\rtimes\mathbb Z_2$.  On
$\mathbb R^4=\langle E_1\rangle^\perp
=\operatorname{span}\{E_2,E_3,E_4,E_5\}$ its action is
\begin{align*}
   \left[\begin{array}{cccc}
         \cos r\theta&-\sin r\theta&0&0\\
         a\sin r\theta&a\cos r\theta&0&0\\
         0&0&\cos s\theta&-\sin s\theta\\
         0&0&a\sin s\theta&a\cos s\theta
    \end{array}\right]
\end{align*} 
for $\theta\in[0,\pi]$, $a\in\mathbb Z_2$, $r+s=m,r-s=-n$.

 The singular orbit is the homogeneous space 
 \[\Sigma=\SO(4)/K\cong \frac{\rm{SO}(3)\times\rm{SO}(3)}{\rm{SO}(2)\times\Z_2}\cong(S^2\times\mathbb S^3)/\Z_2.\] Here $V=T_qM/T_q\Sigma\cong \R^2$ is the normal space spanned by $v_1,v_2$. The space $T_q\Sigma\cong \mathfrak{n}$ is the orthogonal reductive complement of $\mathfrak{k}:=\rm{Lie}(K)$ in $\mathfrak{so}(4)\cong\mathfrak{su}(2)\oplus\mathfrak{su}(2)$.
Let $E_{ij}$ denote the anti-symmetric $5\times 5$ matrix with $1$ at the $(i,j)$ entry and $0$ elsewhere. Then $\mathfrak{k}$ as a sub-algebra of $\mathfrak{so}(4)_{E_1}$ is given by $\rm{Span}\{2E_{23}+E_{45}\}$ and 
$\n={\rm{Span}}\{u_1,u_2,u_3,u_4,u_5\}$ where 
\begin{align*}
    u_1&:= E_{23}-2E_{45}, \quad u_2:= E_{24}-E_{35},\quad u_3:=E_{24}+E_{35},
    u_4:=E_{25}+E_{34},\quad u_5:=E_{25}-E_{34}.
\end{align*}

The isotropy group $K$ acts transitively on the unit circle in $V$, and the stabilizer of a unit normal vector is $H$. Hence the kernel of the $K^0$-action on $V$ is $K^0\cap H$. For the effective $\SO(4)$-diagram, $H\cong\mathbb Z_2^2$ and $K^0\cap H\cong\mathbb Z_2$. Since the kernel here has order two, the normal-slice representation has weight $2$. 
The action of $K$ on the 7-dimensional space $V\oplus\n=\rm{Span}\{v_1,v_2,u_1,u_2,u_3,u_4,u_5\}$ is given by
\begin{align*}
   v_1&\mapsto \cos2\theta v_1-a\sin2\theta v_2, \ \
   v_2\mapsto  \sin2\theta v_1+a\cos2\theta v_2,\ \
   u_1\mapsto a u_1,\\
   u_2&\mapsto \cos m\theta u_2-a\sin m\theta u_4,\ \ u_4\mapsto \sin m\theta u_2+a\cos m\theta u_4, \\
    u_3&\mapsto \cos n\theta u_3+a\sin n\theta u_5,\ \  u_5\mapsto  -\sin n\theta u_3+a\cos n\theta u_5 
     \end{align*}
For the above $S^1$-action to preserve a positive $\G2$-form, these must satisfy a maximal-torus relation in the Lie group $\G2$ and thus 
\begin{align*}
    \pm2\pm m\pm n=0.
\end{align*}
Since $m,n$ are odd \cite[Lemma 7.2]{GWZ} and have opposite signs, up to exchanging the factors and changing both signs, there are two cases:
\begin{align*}
    (m,n)=(1,-1), \ \ \ (m,n)=(2k+1,1-2k),k\geq 1.
\end{align*}
These are precisely the two orbits that occur as the singular orbits for the family $P_k$, $k\geq 1$, in \cite{GWZ}. Note that this also covers the Berger space diagram, as the singular orbits there have weights $(3,-1)$ and $(1,-3)$. 

We now describe the parity conditions explicitly in the form of \eqref{eq:EW-smoothness} for the above singular isotropy groups. 

Let $V\cong\R^2$ denote the normal slice and let $\n$ be an
$\operatorname{Ad}(K)$-invariant complement of $\mathfrak k$ in $\fg$.
For $m\neq n$ along a fixed radial line in $V$ the change of basis from $\{dt,e^i,i=1\dots 6\}$ to $V$ is given by
\begin{align}
\begin{split}
     dt&=v^1, \quad e^1= \frac{2}{m-n}\left(\frac{mv^2}{t}-u^1\right), \quad e^2=\frac{2}{m-n}\left(-\frac{nv^2}{t}+u^1\right)\\
 e^3&= u^2, \quad e^4=u^3,\quad 
   e^5=u^4, \quad e^6=u^5.
\end{split}\label{eq:mn_adapted_coframe}
   \end{align}

The space $\Lambda^{3}(V\oplus \n)^{H}$ of $H$-invariant $3$-forms on $V\oplus \n$ is an $11$-dimensional space spanned by

\[v^{12}u^1, v^1u^{23},v^1u^{45},v^2u^{24},v^2u^{25},v^2u^{34},v^2u^{35},u^{124},u^{125},u^{134},u^{135}.\]

Every $\mathbb Z_2^2$-invariant $3$-form on $V\oplus\mathfrak n$
is represented by a $K_-$-equivariant polynomial on the normal unit
circle.  The identity component acts on
$\operatorname{span}\{v^1,v^2\}$ with weight $2$.  Hence, for
$h\in W$ and $(e^{i\theta},a)\in S^1\rtimes\mathbb Z_2$,
$K_-$-equivariance gives
\begin{align*}
    h(e^{i\theta})&=e^{i\theta}\cdot h(1).
\end{align*} 
Using this, one can compute the polynomial $h$ corresponding to any $\alpha \in \Lambda^3(V\oplus \n)^{\Z_2^2}$ by assuming $h(1)=\alpha$. The action of $K$ on $\Lambda^3(V\oplus\mathfrak{n})^H$ decomposes into 11 irreducible representations. If $K$ acts by $e^{il\theta}$ on  $V_l$,
\begin{align}\label{eqn:3-form-decompose-K}
    \Lambda^3(V\oplus\mathfrak{n})^H=3V_0\oplus 2V_2\oplus V_{2+m+n}\oplus V_{-2+m+n}\oplus V_{2+m-n}\oplus V_{-2+m-n} \oplus V_{m+n}\oplus V_{m-n},
\end{align} where $K$ acts
\begin{itemize}
    \item trivially on $v^{12}u^{1},u^{124},u^{135}$, 
    \item by weight $2$ on $v^2u^{24},v^2u^{35}$,
    \item  by weight $2+m+n$ on $v^1(u^{23}+u^{45})+v^2(u^{25}+u^{34})$, 
    \item by weight $-2+m+n$ on $v^1(u^{23}+u^{45})-v^2(u^{25}+u^{34})$,
    \item  by weight $2+m-n$ on
    $v^1(u^{23}-u^{45})-v^2(u^{25}-u^{34})$,
    \item  by weight $-2+m-n$ on
    $v^1(u^{23}-u^{45})+v^2(u^{25}-u^{34})$,
    \item by weight $m-n$ on $u^{125}-u^{134}$, and $m+n$ on $u^{125}+u^{134}$.
\end{itemize}
Substituting \eqref{eq:mn_adapted_coframe} into \eqref{eq:gen-phi}, we obtain the $\G2$-structure in terms of the new variables on $V\oplus \mathfrak{n}$, 
\begin{align}\label{eqn:phi_t0}
 \g2={}&\frac{4p_1}{(m-n)t}v^{12}u^1
 +p_2v^1u^{23}+p_3v^1u^{45}
 +\frac{2mq_1-2nq_5}{(m-n)t}v^2u^{24}
 +\frac{2mq_2-2nq_6}{(m-n)t}v^2u^{25}
 +\frac{2mq_3-2nq_7}{(m-n)t}v^2u^{34}\notag\\
 &+\frac{2mq_4-2nq_8}{(m-n)t}v^2u^{35}
 +\frac{2q_5-2q_1}{m-n}u^{124}
 +\frac{2q_6-2q_2}{m-n}u^{125}
 +\frac{2q_7-2q_3}{m-n}u^{134}
 +\frac{2q_8-2q_4}{m-n}u^{135}.
\end{align}
We can now determine the parity of the coefficients for $(m,n)=(1,-1)$ and $(m,n)=(2k+1,1-2k)$ and the conditions for extension of the $\G2$-structure to the singular orbit. Note that, since all the singular orbits $K^u_{m,n}$ are conjugate by the action of an element in the quaternionic group, the extension does not depend on $u$ but only on the weights $(m,n)$. 

We also record a more general version of the ODE result \cite[Theorem 4.7]{Haskins-Lorenzo} that we use to show the existence of a family of solutions of \eqref{eqns:gen_ng2} that extends to the singular orbit. See
\cite[Chapters 5 and 6]{Eschenburg-Wang} and
\cite[Section 2]{Rong2013} for further details.

\begin{thm}\label{thm:IVPsoln}
    Consider the initial value problem for $y\in\R^k$ satisfying
    \begin{equation}\label{eq:IVP}
\begin{aligned}
    \dot{y}(t)&=\frac{1}{t}M_{-1}(y)+M(t,y), \qquad y(0)=y_0,
\end{aligned}
    \end{equation}
    where $M_{-1}\colon \R^k\to\R^k$ is a smooth function in a neighbourhood of $y_0$ and $M:\R\times\R^k\to\R^k$ is a smooth function in a neighbourhood of $(0,y_0)$. Then there exists a unique solution of \eqref{eq:IVP} if 
    \begin{enumerate}
        \item[\textup{(i)}]  $M_{-1}(y_0)=0$,
        \item[\textup{(ii)}] $h\textup{Id}-{\rm d}_{y_0}M_{-1}$ is invertible for all $h\in\mathbb N$.
    \end{enumerate}
The solution depends continuously on $y_0$. Moreover, suppose that condition~\emph{(ii)} fails at exactly one integer $N\geq1$, such that
\[
  \dim\ker\bigl(N\Id-d_{y_0}M_{-1}\bigr)=1,
\]
and the recursion for the Taylor coefficients is solvable at order $N$. Then the initial-value problem admits a one-parameter family of smooth solutions. More precisely, if
$v$ spans $\ker(N\Id-d_{y_0}M_{-1})$, then the coefficient of $t^N$
is of the form
\[
  y_N=y_N^{0}+\nu v,\qquad \nu\in\mathbb R,
\]
while all the remaining Taylor coefficients are determined
recursively by $\nu$. For every fixed $\nu$, the corresponding
solution is defined on a sufficiently small interval $[0,T_\nu)$.
\end{thm}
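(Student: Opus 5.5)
The plan is to reduce the singular system to a regular fixed-point problem for the higher Taylor coefficients. This is the standard Malgrange/Eschenburg--Wang approach. First we build a formal power series solution $y(t)\sim\sum_{h\ge0}y_ht^h$. Substituting into \eqref{eq:IVP} and comparing coefficients of $t^{h-1}$ gives $M_{-1}(y_0)=0$ at order $h=0$, which is condition (i). At order $h\ge1$ it gives
\[
\bigl(h\,\Id-d_{y_0}M_{-1}\bigr)y_h=R_h(y_0,\dots,y_{h-1}),
\]
where $R_h$ is a polynomial in the lower coefficients. It collects the higher Taylor terms of $M_{-1}$ at $y_0$ and the Taylor coefficients of $M$ at $(0,y_0)$. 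Under (ii) every $y_h$ is uniquely and smoothly determined by $y_0$, so the formal solution exists and is unique.

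Next we truncate and correct. Fix $N_0$ larger than the operator norm of $d_{y_0}M_{-1}$ (plus a margin), and let $P(t)=\sum_{h\le N_0}y_ht^h$. We seek $y=P+t^{N_0}z$. The equation for $z$ becomes
\[
t\dot z=(A-N_0\Id)z+t\,F(t,z),\qquad A=d_{y_0}M_{-1},
\]
where $F$ is smooth near $(0,0)$. This uses the fact that $M_{-1}(P+t^{N_0}z)-M_{-1}(P)-A\,t^{N_0}z=O(t^{N_0+1}|z|)$. The residual of $P$ is $O(t^{N_0})$, which after division becomes $O(t)$ and is absorbed into $tF$.

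Because $\operatorname{Re}\operatorname{spec}(A-N_0\Id)<0$, the variation-of-constants formula
\[
z(t)=\int_0^1 s^{N_0\Id-A}\,F(st,z(st))\,t\,ds
\]
is well defined. On $C^0([0,T])$, for small $T$, the right-hand side is a contraction. This yields a unique continuous solution $z$ with $z(0)=0$, hence a solution $y$ of \eqref{eq:IVP}. The contraction depends continuously, indeed smoothly, on $y_0$, since all the data do. Smoothness of $y$ up to $t=0$ follows by differentiating the integral equation, or by bootstrapping with larger $N_0$.

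Uniqueness among solutions smooth at $0$ holds because any such solution has a Taylor series obeying the same recursion. Any solution that is merely $C^1$ with $y(0)=y_0$ also yields a $z=o(1)$ solving the same fixed-point equation. The main care is in checking that the contraction constant is controlled by $\sup_s s^{\operatorname{Re}(N_0-A)}\le1$ uniformly.

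For the resonant case, the recursion is uniquely solvable except at $h=N$. Solvability at order $N$ means $R_N\in\operatorname{im}(N\Id-A)$, and $\dim\ker=1$ means the solutions are $y_N=y_N^0+\nu v$. For each $\nu$ the recursion at orders $h>N$ is again uniquely solvable, and it depends polynomially on $\nu$. We then run the same truncation and contraction with $N_0>N$ for each fixed $\nu$, obtaining a solution on $[0,T_\nu)$.

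Different values of $\nu$ give different solutions, because their $t^N$ coefficients differ. Conversely, every smooth solution must have $y_N$ of this form. The expected obstacle is this resonant step. One must ensure that, after fixing $\nu$, no further resonance or hidden compatibility condition appears at higher order. This is guaranteed because (ii) fails only at $h=N$. The remaining work is bookkeeping to show that $R_h$ for $h>N$ depends only on the already chosen coefficients.
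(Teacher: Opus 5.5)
Your proposal is the standard Malgrange-type argument: formal power series, truncation $y=P+t^{N_0}z$ with $N_0>\|d_{y_0}M_{-1}\|$, a contraction for the remainder, bootstrapping in $N_0$, and fixing $\nu$ at the single resonant order. This is the argument behind the results the paper relies on. The paper records the theorem without proof, citing Eschenburg--Wang, Foscolo--Haskins (Theorem~4.7) and Rong. Existence, smoothness at $t=0$, continuous dependence on $y_0$, and the resonant one-parameter family all go through as you describe.

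One small correction on the contraction bound: the uniform estimate you need is $\|s^{N_0\mathrm{Id}-A}\|\le s^{N_0-\|A\|}\le 1$, which is exactly what $N_0>\|A\|$ gives. The real parts of the spectrum alone do not control this norm when $A$ is not normal.

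One claim is false, however: uniqueness does not hold among merely $C^1$ solutions. Take $k=1$, $M_{-1}(y)=\tfrac32 y$, $M=0$, $y_0=0$. Conditions (i) and (ii) hold, yet $y=Ct^{3/2}$ is $C^1$ on $[0,\infty)$ with $y(0)=0$ for every $C$. With $N_0=2$ your $z=t^{-2}y=Ct^{-1/2}$ is unbounded, not $o(1)$.

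What your method does prove is uniqueness among smooth (indeed $C^{N_0}$) solutions, and this is the sense in which the theorem must be read. For such $y$ the Taylor coefficients up to order $N_0$ obey the same recursion, so $z=t^{-N_0}(y-P)$ is bounded. Integrating $\frac{d}{dt}\bigl(t^{N_0\mathrm{Id}-A}z\bigr)=t^{N_0\mathrm{Id}-A}F(t,z)$ from $\epsilon$ to $t$, the boundary term $\epsilon^{N_0\mathrm{Id}-A}z(\epsilon)$ tends to $0$. Hence $z$ satisfies your integral equation and is its unique fixed point.

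Agreement of Taylor series alone would not give uniqueness, because of flat functions, so this fixed-point step is essential. In the resonant case the same argument gives uniqueness for each fixed $\nu$.
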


Thus, in order to find a solution of the system of ODEs, one needs to find an initial point $y_0\in\R^k$ for which $M_{-1}(y_0)=0$.  Moreover, if for some $h\in\mathbb N$, $h\textup{Id}-{\rm d}_{y_0}M_{-1}$ is not invertible, then the solution also depends on the $h$th-order derivatives of $y_i$ at $t=0$.

\subsection{Singular orbit with slope (1,$-$1).} \label{section:sing_11}
We start with the case $(m,n)=(1,-1)$. From the decomposition in \eqref{eqn:3-form-decompose-K} for $m=1,n=-1$, we find that the coefficients of 
\begin{itemize}
    \item[] $v^{12}u^{1},u^{124},u^{135},v^1(u^{23}-u^{45})-v^2(u^{25}-u^{34}),v^1(u^{23}-u^{45})+v^2(u^{25}-u^{34}),u^{125}+u^{134}$ are even, and
    \item[] $v^2u^{24},v^2u^{35},v^1(u^{23}+u^{45})+v^2(u^{25}+u^{34}),v^1(u^{23}+u^{45})-v^2(u^{25}+u^{34}),u^{125}-u^{134}$ are odd.
\end{itemize}
Up to discrete symmetries, the parity conditions for extension imposed by Eschenburg--Wang on the functions $p_i,q_i$ in \eqref{eqn:phi_t0} are
\begin{enumerate}[label=(\roman*)]   
\item $p_1, p_2+p_3$ are odd, $p_2-p_3$ is even,
\item $q_1, q_4, q_5, q_8$ are even,
\item $q_2-q_3, q_6-q_7$ are odd, $q_2+q_3, q_6+q_7$ are even.
\end{enumerate}
These conditions, together with the well-definedness of the $\G2$-structure at $t=0$, impose the following form on the local solutions of \eqref{eqns:gen_ng2}. For real parameters $a,b,c$ and smooth functions $y_i(t)$, $i=1,\dots,8$, we have
\begin{align}\begin{split}\label{eqn:initial_11}
    q_1(t)&=a+t^2y_1(t),\ \
q_2(t)=b+t y_2(t)+t^2 y_3(t),\ \
q_3(t)=b-ty_2(t)+t^2y_3(t),\\
q_4(t)&=c+t^2 y_4(t),\ \
q_5(t)=-a+t^2 y_5(t),\ \
q_6(t)=-b+ty_6(t)+t^2y_7(t),\\
q_7(t)&=-b-t y_6(t)+t^2y_7(t),\ \
q_8(t)=-c+t^2y_8(t).
\end{split}
\end{align}
With respect to the $y_i$, equation \eqref{eqns:gen_ng2} takes the form 
\begin{align*}
    y'(t)=\frac{1}{t}M_{-1}(y)+M(t,y),
\end{align*} as shown explicitly below. Define
 \begin{align*}
A&=\frac{q_2+q_7}{t}
   =y_2-y_6+t(y_3+y_7),\\
B&=\frac{q_3+q_6}{t}
   =-y_2+y_6+t(y_3+y_7),\\
C&=q_4+q_5
   =c-a+t^2(y_4+y_5),\\
    Q&=-ABC=-\bigl(y_2-y_6+t(y_3+y_7)\bigr)
 \bigl(-y_2+y_6+t(y_3+y_7)\bigr)
 \bigl(c-a+t^2(y_4+y_5)\bigr),\\
    N_i& =\alpha_i (2\alpha_i-\4)+2\beta_2, \ i=\{1,4,6,7\},\\
    N_i& = \alpha_i (2\alpha_i-\4)+2\beta_1, \ i=\{2,3,5,8\}.
\end{align*}
Direct substitution of \eqref{eqn:initial_11} shows that
$$N_1,N_4,N_5,N_8=O(t^2),
\qquad
N_2,N_3,N_6,N_7=O(t).$$  For 
    \begin{align*}
       R_i &= \frac{N_i}{t^2},  \ i=\{1,4,5,8\}, \ \ \ 
    R_i = \frac{N_i}{t},  \ i=\{2,3,6,7\},
\end{align*}
the equations \eqref{eqns:gen_ng2} become for
\begin{align*}
    P_1={}&\sqrt{-\frac{AB}{\lambda C}},\ \ 
P_2=-\sqrt{-\frac{AC}{\lambda B}},\ \
P_3=\sqrt{-\frac{BC}{\lambda A}},
\end{align*}
$$ 
\begin{aligned}
t y_1'
={}&-2y_1+\frac{\lambda R_1}{2\sqrt{Q} q_8},
\\
t y_2'
={}&-y_2+\frac12(P_3-P_2)
-\frac{\lambda R_2}{4\sqrt{Q} q_7}
+\frac{\lambda R_3}{4\sqrt{Q} q_6},
\\
t y_3'
={}&-2y_3+
\frac{1}{2t}\left(
P_2+P_3
-\frac{\lambda R_2}{2\sqrt{Q} q_7}
-\frac{\lambda R_3}{2\sqrt{Q} q_6}
\right),
\\
t y_4'
={}&-2y_4-P_1+\frac{\lambda R_4}{2\sqrt{Q} q_5},
\end{aligned}
$$
$$
\begin{aligned}
 t y_5'
={}&-2y_5+P_1-\frac{\lambda R_5}{2\sqrt{Q} q_4},
\\
t y_6'
={}&-y_6+\frac12(P_3-P_2)
+\frac{\lambda R_6}{4\sqrt{Q} q_3}
-\frac{\lambda R_7}{4\sqrt{Q} q_2},
\\
t y_7'
={}&-2y_7-\frac{1}{2t}\left(
P_2+P_3
-\frac{\lambda R_6}{2\sqrt{Q} q_3}
-\frac{\lambda R_7}{2\sqrt{Q} q_2}
\right),
\\
t y_8'
={}&-2y_8-\frac{\lambda R_8}{2\sqrt{Q} q1}.
\end{aligned}
$$

Note that the coefficient of $1/t$ in $y_3'(t)$ and $y_7'(t)$ vanishes at $t=0$, so they still have the form required by Theorem \ref{thm:IVPsoln}.
    
  Let $y_0=\{r_1,r_2,\dots,r_8\}$ be the initial condition for which $M_{-1}(y_0)=0$ in Theorem~\ref{thm:IVPsoln}. 

When $c>a$ and $ac-b^2>0$ the smoothness conditions determine the initial values $y_i(0)=r_i$ where $\mu=\pm 1$ 
\begin{align}\label{eqn:y_intial_11}
r_1&=\frac{\lambda^2(5a^2-3ac-2b^2)}{8(c-a)}
+\frac{\mu a}{2}\sqrt{\frac{\lambda(c-a)}{ac-b^2}}+\frac{(1-\mu)a b^2\lambda^2}{4(ac-b^2)},\notag \\
r_2&=\mu\lambda\sqrt{ac-b^2}-\sqrt{\frac{c-a}{\lambda}},\notag \\
r_3&=\frac{b(c-a)}{4\lambda (ac-b^2)}
+\frac{b\lambda^2(ac-5b^2)}{8 (ac-b^2)}
-\frac{b\sqrt{\lambda (c-a)}}{2\sqrt{ac-b^2}}, \notag\\
r_4&=-\frac{\lambda^2(5c^2-3ac-2b^2)}{8(c-a)}
+\frac{\mu(ac+c^2-2b^2)}{2(c-a)}
\sqrt{\frac{\lambda(c-a)}{ac-b^2}}
+\frac{(1-\mu)c b^2\lambda^2}{4(ac-b^2)},\\
r_5&=-\frac{\lambda^2(5a^2-3ac-2b^2)}{8(c-a)}
-\frac{\mu(a^2+ac-2b^2)}{2(c-a)}
\sqrt{\frac{\lambda(c-a)}{ac-b^2}}+\frac{(\mu-1)a b^2\lambda^2}{4(ac-b^2)},\notag \\
r_6&=-\mu\lambda\sqrt{ac-b^2}-\sqrt{\frac{c-a}{\lambda}},\notag \\
r_7&=-\frac{b(c-a)}{4\lambda (ac-b^2)}
-\frac{b\lambda^2(ac-5b^2)}{8 (ac-b^2)}
-\frac{b\sqrt{\lambda (c-a)}}{2\sqrt{ac-b^2}},\notag \\
r_8&=\frac{\lambda^2(5c^2-3ac-2b^2)}{8(c-a)}
+\frac{\mu c}{2}\sqrt{\frac{\lambda(c-a)}{ac-b^2}}+\frac{(\mu-1)c b^2\lambda^2}{4(ac-b^2)}. \notag
\end{align}
One can compute that for all natural numbers $n$
\begin{align}\label{eqn:det_11}
\det\left(n{\rm{Id}}-\d_{y_0}M_{-1}\right)
=
n^3(n+1)(n+2)^4.
\end{align}

Theorem \ref{thm:IVPsoln} therefore gives two three-parameter families,
$\xi_{a,b,c}$ for $\mu=1$, and $\widetilde\xi_{a,b,c}$ for
$\mu=-1$. Each solution is uniquely determined by its initial data
and depends continuously on $a,b,c$.
The Taylor coefficients of the  solution are
therefore determined recursively and uniquely by the parameters
$a,b,c$. Namely
\begin{itemize}
\item $\xi_{a,b,c}(t)$ is the branch for which $\mu=1$, $c>a$, $ac-b^2>0$;
\item $\tilde\xi_{a,b,c}(t)$ is the branch for which $\mu=-1$, $c>a$, $ac-b^2>0$.
\end{itemize}
Both choices define three-parameter families of local solutions which
extend smoothly across the $(1,-1)$ singular orbit.  The homogeneous
nearly parallel structure on squashed $S^7$ lies in the
$\xi_{a,b,c}$ family, with
\[ (a,b,c)=(\sqrt5/25,\>-2\sqrt5/25,\>19\sqrt5/25).\]

When $c-a<0, ac-b^2<0$, the metric is not positive definite; hence, we disregard it. The induced metric on the singular orbit $\operatorname{Span}\{u_i,i=1,\dots,5\}$ is given by
\begin{align}\label{eqn:metric-11}
    \frac{1}{\rho}\begin{pmatrix}
    \rho^{3}&0&0&0&0\\
    0&a&b&0&0\\
    0&b&c&0&0\\
    0&0&0&a&b\\
    0&0&0&b&c
\end{pmatrix}, \qquad \rho=\sqrt{\frac{ac-b^2}{c-a}}.
\end{align}
The parameters $a$ and $c$ control the sizes of the two
distinguished $2$-dimensional directions in the singular orbit,
while $b$ is an off-diagonal mixing parameter.  The parameter $b$ can be regarded as a
\textit{symmetry-breaking parameter}, as its vanishing indicates the presence of an enhanced
$\U(1)$-symmetry:

\begin{thm}\label{thm:complete-11} For $a,b,c\in\R$, $c-a>0$, $ac-b^2>0$, there are two three-parameter families of solutions $\xi_{a,b,c}$ and $\tilde{\xi}_{a,b,c}$ of \eqref{eqns:gen_ng2} that extend smoothly to the singular orbit with slope $(1,-1)$ with initial conditions
\begin{align*}
    q_1(0)=a,  \ q_2(0)=q_3(0)=-q_6(0)=-q_7(0)=b, \ q_4(0)=c, \ q_5(0)=-a, \ q_8(0)=-c.
\end{align*}
If $b=0$, the solutions have an additional $\U(1)$-symmetry given by $q_2=-q_3, q_7=-q_6$. 
\end{thm}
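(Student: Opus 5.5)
The plan is to apply Theorem~\ref{thm:IVPsoln} to the singular system for $y=(y_1,\dots,y_8)$ obtained above by substituting the ansatz \eqref{eqn:initial_11} into \eqref{eqns:gen_ng2}. The ansatz itself is forced by the Eschenburg--Wang parity conditions (i)--(iii) for slope $(1,-1)$. Those conditions ensure that any smooth solution $y(t)$ yields $p_i,q_i$ whose coefficients in \eqref{eqn:phi_t0} have the correct parity. By \eqref{eq:EW-smoothness}, the resulting $3$-form then extends smoothly across $\Sigma$. The initial conditions stated in the theorem can be read off from \eqref{eqn:initial_11} at $t=0$.

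\emph{Step 1: the system has the form \eqref{eq:IVP}.} We check that the right-hand sides are $t^{-1}M_{-1}(y)+M(t,y)$ with $M_{-1}$ and $M$ smooth near $(0,y_0)$. Three facts are needed:
\begin{itemize}
\item $N_1,N_4,N_5,N_8=O(t^2)$ and $N_2,N_3,N_6,N_7=O(t)$, so the $R_i$ are smooth.
\item $Q>0$ near $t=0$ when $c>a$ and $ac-b^2>0$, so $\sqrt Q$ and the $P_i$ are smooth. This is where the positivity hypotheses enter.
\item The brackets multiplying $\frac1{2t}$ in the $y_3'$ and $y_7'$ equations vanish to first order at $t=0$.
\end{itemize}
The third fact is the delicate one. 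We expand $P_2+P_3-\frac{\lambda R_2}{2\sqrt Q q_7}-\frac{\lambda R_3}{2\sqrt Q q_6}$ to order $t$ and show that its constant term vanishes identically once $M_{-1}$'s other components vanish. Equivalently, the nearly half-flat constraints \eqref{eqn:NHFconstraints} at $t=0$ force it. We would verify this symbolically.

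\emph{Step 2: solve $M_{-1}(y_0)=0$ and check non-resonance.} Solving the algebraic system $M_{-1}(y_0)=0$ gives the two branches \eqref{eqn:y_intial_11}, $\mu=\pm1$. Substituting them back is a direct (computer-algebra) verification. The second branch comes from choosing the sign of $\sqrt{ac-b^2}$ in $r_2,r_6$. Next we compute the linearisation $d_{y_0}M_{-1}$. It has eigenvalues $0^{(3)},-1,-2^{(4)}$, which gives \eqref{eqn:det_11}. Hence $n\,\mathrm{Id}-d_{y_0}M_{-1}$ is invertible for every $n\in\mathbb N$, and Theorem~\ref{thm:IVPsoln} yields a unique smooth solution for each $(a,b,c)$ and each $\mu$, depending continuously on the parameters. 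These are the families $\xi_{a,b,c}$ and $\widetilde\xi_{a,b,c}$.

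The main obstacle is Step 2: computing $d_{y_0}M_{-1}$ and its characteristic polynomial at the explicit, square-root-laden point \eqref{eqn:y_intial_11}. I would first simplify by the scaling $t\mapsto st$, $(p,q)\mapsto(s^2p,s^3q)$, $\lambda\mapsto\lambda/s$. I would also exploit the block structure pairing $(y_2,y_6)$ and $(y_3,y_7)$ under $\tau_3$-type symmetry.

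\emph{Step 3: the case $b=0$.} Here we use uniqueness. The involution $q_2\leftrightarrow -q_3$, $q_7\leftrightarrow -q_6$ (with the remaining variables fixed, correspondingly $y_2\leftrightarrow y_2$, $y_3\mapsto -y_3$, etc.) is induced by conjugation by the extra $\U(1)$ normalising $K$. We check that it preserves the system \eqref{eqns:gen_ng2} and the constraint set $\mathcal C$. When $b=0$ it also fixes the initial data: $r_3=r_7=0$, and the remaining $r_i$ are invariant. Applying the involution to a solution therefore produces a solution with the same $y_0$. Uniqueness in Theorem~\ref{thm:IVPsoln} then shows the solution is invariant, which gives $q_2=-q_3$ and $q_7=-q_6$ and hence the enhanced symmetry.
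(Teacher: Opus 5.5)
Your proposal is correct and follows the paper's route. Existence and uniqueness of both branches $\mu=\pm1$ come from Theorem~\ref{thm:IVPsoln} at the explicit zero \eqref{eqn:y_intial_11} of $M_{-1}$, using $\det(n\Id-d_{y_0}M_{-1})=n^3(n+1)(n+2)^4$, and the $b=0$ enhancement comes from equivariance plus uniqueness of the singular initial-value problem.

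The only difference is that you use the quarter-turn element of the paper's circle rather than the fixed locus $\mathcal P_{\U(1)}$ of the whole circle. That element is $q_2\mapsto -q_3$, $q_3\mapsto -q_2$, $q_6\mapsto -q_7$, $q_7\mapsto -q_6$, together with $p_2\mapsto -p_3$, $p_3\mapsto -p_2$; the $p$ swap is needed to preserve $\mathcal C$, so "the remaining variables fixed" is not quite right. Unlike a general rotation, this element preserves the space of $\mathbb Z_2^2$-invariant forms, so its symmetry of \eqref{eqns:gen_ng2} can be checked directly. Your fixed-data condition $r_3=r_7=0$ (from $q_2+q_3=2b+2t^2y_3$ and $q_6+q_7=-2b+2t^2y_7$) is exactly the right one.
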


\begin{proof} Consider the circle action
\[
 e^3+i e^5\longmapsto e^{i\theta}(e^3+i e^5),\qquad
 e^4+i e^6\longmapsto e^{-i\theta}(e^4+i e^6),
\]
with $e^1,e^2$, and $dt$ fixed.
It follows directly from the expression for $\varphi$ that its
fixed-point set is
\[
 \mathcal P_{\U(1)}
 =
 \left\{
 p_2+p_3=0,\quad q_2+q_3=0,\quad q_6+q_7=0
 \right\}.
\]
For the smooth $(1,-1)$ initial data,
\[
 q_2(0)+q_3(0)=2b,
 \qquad
 q_6(0)+q_7(0)=-2b.
\]
Thus the restriction of $\varphi$ to the singular orbit is fixed by the
circle precisely when $b=0$.

When $b=0$, the initial values in
\eqref{eqn:y_intial_11} also satisfy
\[
 r_2+r_3=0,\qquad r_6+r_7=0.
\]
The algebraic nearly half-flat relations then give $p_2+p_3=0$.
Hence the full regularized initial data belong to
$\mathcal P_{\U(1)}$. Since the nearly parallel equations are equivariant
under the circle action, $\mathcal P_{\U(1)}$ is invariant under the
associated vector field. Uniqueness of the singular initial-value
problem therefore implies that the solutions $\xi_{a,0,c}$ and
$\widetilde\xi_{a,0,c}$ remain in $\mathcal P_{\U(1)}$ throughout their
maximal intervals of existence.

If $b\neq0$, the restriction to the singular orbit does not belong to
$\mathcal P_{\U(1)}$, so the corresponding solution is not invariant
under this circle.
\end{proof}

\subsection{Singular orbit with slope (3,$-$1)}\label{section:sing_t_0}

Before tackling the general $(2k+1,1-2k)$ case, we first treat $k=1$, namely the $(3,-1)$ case. This case is important both for the purposes of this paper, since these are the weights of the singular orbits for the Berger space, and computationally, since it gives better insight into the ODE system for the general $k$ case. 
The Eschenburg--Wang criterion here requires the coefficients of
$v^{12}u^1$, $v^1(u^{25}+u^{34})-v^2(u^{23}+u^{45})$,
$u^{124}$, $u^{135}$, $u^{123}-u^{145}$ and
$v^1(u^{25}+u^{34})+v^2(u^{23}+u^{45})$ to be even.  The
coefficients of $v^2u^{24}$, $v^2u^{35}$, $u^{123}+u^{145}$ and
$v^1(u^{25}-u^{34})\pm v^2(u^{23}-u^{45})$ must be odd.  Therefore
the coefficient functions $p_i,q_i$ satisfy the following parity
conditions:
\begin{enumerate}[label=\textup{(\roman*)}]
    \item $p_1$ is odd
    \item $p_2+p_3$ is even, $p_2-p_3$ is odd,
    \item $q_1,q_4, q_5,q_8$ are even,
    \item $q_2-q_3$ is even, $q_2+q_3$ is odd,
    \item $q_6-q_7$ is even, $q_6+q_7$ is odd.
 \end{enumerate}

Imposing the above parity relations and the well-definedness of $\g2$ at $t=0$ we can express the coefficients $q_i,i=1\dots 8$ as the following Taylor series expansions around $t=0$ for some real constants $a,b,c$ and smooth functions $y_i(t),i=1\dots 8$,

\begin{align}\label{eqns:q_in_y_t0}
\begin{split}
     q_1(t)&=a+t^2y_1(t),\ \ q_2(t)=ty_2(t),\ \  q_3(t)=ty_3(t),\ \ q_4(t)=c+t^2y_4(t),\\
    q_5(t)&=-3a+t^2y_5(t),\ \
    q_6(t)=ty_6(t),\ \
    q_7(t)=ty_7(t),\ \
    q_8(t)=-3c+t^2y_8(t).
\end{split}
    \end{align}
Let \begin{align*}
    Q&=-(y_2+y_7)(y_3+y_6)(c-3a+t^2(y_4+y_5))
\end{align*}
    then we can write the differential equations for the $\G2$-structure to be nearly parallel \eqref{eqns:gen_ng2} as follows
    \begin{align}
        \label{eqns:yprime}
        y_1'(t)=&\frac{1}{t}\left(-2y_1- \frac{a\lambda^{5/2}}{2\sqrt{Q}}(a(3y_4+y_8)-c(3y_1+y_5)-y_2y_7-y_3y_6-6y_2y_3)\right) \notag \\
        &-\frac{\lambda^{5/2}}{2\sqrt{Q}}t(a(y_1y_8-y_4y_5)+y_1(a(3y_4+y_8)-c(3y_1+y_5)-y_2y_7-y_3y_6)+2y_2y_3y_5 \notag\\
        & \ \ \ \ \ \ \ \ \ \ \ \ \ +t^2(y_1^2y_8-y_1y_4y_5)) \notag \\
        y_2'(t)=&\frac{1}{t}\left(-y_2+\sqrt{\frac{-(y_3+y_6)(c-3a+t^2(y_4+y_5)}{\lambda(y_2+y_7)}}+\frac{\lambda^{5/2} ac }{\sqrt{Q}}(3y_2+y_6)\right) \notag\\
        &+\frac{\lambda^{5/2}}{2\sqrt{Q}}t((y_2^2y_7-y_2y_3y_6-a(y_2y_8-3y_2y_4-2y_4y_6)-c(y_2y_5-3y_1y_2-2y_1y_6))\notag \\
        & \ \ \ \ \ \ \ \ \ \ \ \ \ -t^2(y_1y_2y_8+y_2y_4y_5-2y_1y_4y_6)) \notag\\
        y_3'(t)=&\frac{1}{t}\left(-y_3+\sqrt{\frac{-(y_2+y_7)(c-3a+t^2(y_4+y_5)}{\lambda(y_3+y_6)}}+\frac{\lambda^{5/2} ac }{\sqrt{Q}}(3y_3+y_7)\right) \notag\\
        &+\frac{\lambda^{5/2}}{2\sqrt{Q}}t((y_3^2y_6-y_2y_3y_7-a(y_3y_8-3y_3y_4-2y_4y_7)-c(y_3y_5-3y_1y_3-2y_1y_7))\\
        & \ \ \ \ \ \ \ \ \ \ \ \ \ -t^2(y_1y_3y_8+y_3y_4y_5-2y_1y_4y_7)) \notag\\
        y_4'(t)=&\frac{1}{t}\left(-2y_4+\sqrt{\frac{-(y_2+y_7)(y_3+y_6)}{\lambda(c-3a+t^2(y_4+y_5))}}+\frac{c\lambda^{5/2}}{2\sqrt{Q}}(a(3y_4+y_8)-c(3y_1+y_5)+y_2y_7+y_3y_6+6y_2y_3)\right) \notag\\
        &-\frac{\lambda^{5/2}}{2\sqrt{Q}}t(c(y_4y_5-y_1y_8)+y_4(-a(3y_4+y_8)+c(3y_1+y_5)-y_2y_7-y_3y_6)+2y_2y_3y_8 \notag\\
         & \hspace{1.25cm}-t^2(y_1y_4y_8-y_4^2y_5)) \notag\\
        y_5'(t)=&\frac{1}{t}\left(-2y_5-\sqrt{\frac{-(y_2+y_7)(y_3+y_6)}{\lambda(c-3a+t^2(y_4+y_5))}}+\frac{a\lambda^{5/2}}{2\sqrt{Q}}(3a(3y_4+y_8)-3c(3y_1+y_5)+3y_2y_7+3y_3y_6+2y_6y_7)\right) \notag\\
        &+\frac{\lambda^{5/2}}{2\sqrt{Q}}t(-3a(y_4y_5-y_1y_8)+y_5(-a(3y_4+y_8)+c(3y_1+y_5)-y_2y_7-y_3y_6)+2y_1y_6y_7 \notag\\
         & \hspace{1.25cm}-t^2(y_1y_5y_8-y_4y_5^2)) \notag\\
        y_6'(t)=&\frac{1}{t}\left(-y_6-\sqrt{\frac{-(y_2+y_7)(c-3a+t^2(y_4+y_5)}{\lambda(y_3+y_6)}}-\frac{3\lambda^{5/2} ac }{\sqrt{Q}}(y_6+3y_2)\right) \notag\\
        &-\frac{\lambda^{5/2}}{2\sqrt{Q}}t((y_3y_6^2-y_2y_6y_7-a(y_6y_8-3y_4y_6+6y_2y_8)-c(y_5y_6-3y_1y_6+6y_2y_5)) \notag\\
         & \ \ \ \ \ \ \ \ \ \ \ \ \ -t^2(y_1y_6y_8+y_4y_5y_6-2y_2y_5y_8)) \notag\\
          y_7'(t)=&\frac{1}{t}\left(-y_7-\sqrt{\frac{-(y_3+y_6)(c-3a+t^2(y_4+y_5)}{\lambda(y_2+y_7)}}-\frac{3\lambda^{5/2} ac }{\sqrt{Q}}(3y_3+y_7)\right) \notag\\
        &-\frac{\lambda^{5/2}}{2\sqrt{Q}}t((y_2y_7^2-y_3y_6y_7-a(y_7y_8-3y_4y_7+6y_3y_8)-c(y_5y_7-3y_1y_7+6y_3y_5)) \notag\\
        & \ \ \ \ \ \ \ \ \ \ \ \ \ -t^2(y_1y_7y_8+y_4y_5y_7-2y_3y_5y_8)) \notag\\
        y_8'(t)=&\frac{1}{t}\left(-2y_8+\frac{c\lambda^{5/2}} {2\sqrt{Q}}(-3a(3y_4+y_8)+3c(3y_1+y_5)+3(y_2y_7+y_3y_6)+2y_6y_7)\right) \notag\\
        &+\frac{\lambda^{5/2}}{2\sqrt{Q}}t(-3c(y_1y_8-y_4y_5)+y_8(a(3y_4+y_8)-c(3y_1+y_5)-y_2y_7-y_3y_6)+2y_4y_6y_7 \notag \\
        & \ \ \ \ \ \ \ \ \ \ \ \ \ +t^2(y_1y_8^2-y_4y_5y_8))  \notag     
\end{align}

Let $y_0=\{b_1,b_2,\dots,b_8\}$ be the initial condition for which $M_{-1}(y_0)=0$ in Theorem~\ref{thm:IVPsoln}. 

For $a>0,c<0$ and $\mu\in\{\pm 1\}$, we obtain
\begin{align}
\begin{split}\label{eqn:initialcond_0}
    b_1&=-\frac{3\lambda^2a \ (5a-c)}{8 (3a-c)}+\frac{\mu}{2c}\sqrt{-\lambda ac(3a-c)},\\
    b_2=b_3&=\lambda\mu\sqrt{-ac}+\sqrt{\frac{3a-c}{\lambda}},\\
    b_4&=-\frac{\lambda^2c \ (9a-5c)}{8(3a-c)}+\frac{\mu}{2a}\sqrt{-\lambda ac(3a-c)}-\mu\sqrt{\frac{-\lambda ac}{3a-c}},\\
    b_5&=\frac{9\lambda^2a \ (5a-c)}{8(3a-c)}-\frac{\mu}{2c}\sqrt{-\lambda ac(3a-c)}+\mu\sqrt{\frac{-\lambda ac}{3a-c}},\\
     b_6=b_7&=-3\lambda\mu\sqrt{-ac}-\sqrt{\frac{3a-c}{\lambda}},\\
      b_8&=\frac{3\lambda^2c \ (9a-5c)}{8(3a-c)}-\frac{\mu}{2a}\sqrt{-\lambda ac(3a-c)}.
      \end{split}
\end{align}

For $ac>0, 3a-c>0$ and $\mu\in\{\pm 1\}$ we have
\begin{align}
\begin{split} \label{eqn:iniital_ac_pos}
    b_1&=\frac{\lambda^2a \ (9a-5c)}{8 (3a-c)}-\frac{\mu}{2c}\sqrt{\lambda ac(3a-c)},\\
    b_2=b_3&=-\lambda\mu\sqrt{ac}+\sqrt{\frac{3a-c}{\lambda}},\\
    b_4&=\frac{3\lambda^2c \ (5a-c)}{8(3a-c)}-\frac{\mu}{2a}\sqrt{\lambda ac(3a-c)}-\mu\sqrt{\frac{\lambda ac}{3a-c}},\\
    b_5&=-\frac{3\lambda^2a \ (9a-5c)}{8(3a-c)}+\frac{\mu}{2c}\sqrt{\lambda ac(3a-c)}+\mu\sqrt{\frac{\lambda ac}{3a-c}},\\
     b_6=b_7&=3\lambda\mu\sqrt{ac}-\sqrt{\frac{3a-c}{\lambda}},\\
      b_8&=-\frac{9\lambda^2c \ (5a-c)}{8(3a-c)}+\frac{\mu}{2a}\sqrt{\lambda ac(3a-c)}.
      \end{split}
\end{align}

We now need to check whether the solution with the above initial conditions is unique. For this, we need to compute the determinant $\det(n {\rm{Id}}-d_{y_0}M_{-1})$ in both cases ($ac>0$ and $ac<0$). 

We compute that if $\mu=1$ 
\begin{align*}
    \det( n \rm{Id}-d_{y_0}M_{-1})&=n(n+1)(n+2)^4(n^2+2n+5),
\end{align*} 
hence $n\operatorname{Id}-d_{y_0}M_{-1}$ is invertible for every
$n\in\mathbb N$.  The singular IVP therefore has a unique solution
for each admissible pair $(a,c)$, giving a two-parameter family
$\Psi_{a,c}$ which extends smoothly across $t=0$ and satisfies
$3a-c>0$.

When $\mu=-1$ we get 
\begin{align}\label{eqn:det_M}
    \det( n \rm{Id}-d_{y_0}M_{-1})&=(n-1)(n+1)(n+2)^5(n+3),
\end{align} 
which vanishes for $n=1$.  Thus Theorem~\ref{thm:IVPsoln} does not give
uniqueness in this case.  The kernel is one-dimensional, so the solution is
determined by one additional first-order datum.  We take
$\nu\coloneqq y_3'(0)$ and denote the resulting three-parameter family
by $\eta_{a,c,\nu}$.

The homogeneous Berger structure and the squashed-sphere structure
both extend smoothly across this singular orbit.  In both cases
$\mu=-1$, so the zeroth-order data alone do not determine the local
solution uniquely.

For the Berger $a=\frac{\sqrt{5}}{20}, c=-\frac{3\sqrt{5}}{100}$, and $\nu=\frac{\sqrt{5}}{50}$ with $\lambda=\frac{6}{\sqrt{5}}$.

For the homogeneous nearly parallel $\G2$-structure on the squashed $S^7$, one of the singular ends has the same weight as the Berger metric, so we can compute the parameters $a,c,\nu$. For the squashed $S^7$, the parameters $a=\frac{\sqrt{5}}{25}, c=-\frac{3\sqrt{5}}{5}$, and $\nu=\frac{\sqrt{5}}{50}$ with $\lambda=\frac{6}{\sqrt{5}}$. 

\begin{rem}
    The square root is real only if
    $Q=-(y_2+y_7)(y_3+y_6)(c-3a+t^2(y_4+y_5))>0$.
    At $t=0$ the identities $y_2=y_3$ and $y_6=y_7$ reduce this to
    $-(y_2(0)+y_6(0))^2(c-3a)>0$, and therefore $3a-c>0$, as assumed.
\end{rem}
Up to finite quotients, the singular orbit is an $S^1$-bundle over
$S^2\times S^2$ with weights $3$ and $-1$, respectively. Thus we have
a fibration
\begin{align*}
    S^1 \to T^{3,-1} \to S^2\times S^2,
\end{align*}
and since $\gcd(3,-1)=1$, $T^{3,-1}\cong S^2\times S^3$. 

For both families of solutions, the metric induced from the $\G2$-metric on the singular orbit at $t=0$ is given by
\begin{align*}
    g_{\Sigma_-}&=\frac{1}{\rho} \operatorname{diag}\left(25\rho^3,a,-c,a,-c\right), \qquad \rho=\sqrt{\frac{\lambda|ac|}{3a-c}}.
    \end{align*}
Thus we get a Riemannian metric only when $a>0,c<0$. Accordingly, in this article we focus on this case and assume $a>0,c<0$. 
If we denote by $\Sigma^1_-,\Sigma^2_-$ the planes spanned by $\{u^1,u^2,u^4\}$ and $\{u^1,u^3,u^5\}$, respectively, then 
\begin{align*}
\sqrt{\det(g_{T^{3,-1}}|_{\Sigma^1_-})}&=5a, \qquad \sqrt{\det(g_{T^{3,-1}}|_{\Sigma^2_-})}= -5c.
\end{align*}
Thus $a$ and $c$ are intrinsic singular-orbit parameters: they determine the relative sizes of the two distinguished $2$-plane distributions inside $T\Sigma_-$.  The $\G2$-structure \eqref{eqn:phi_t0} at the singular orbit for both families is given by 
\begin{align*}
    \g2|_{\Sigma_-}=& -2\sqrt{\frac{-\lambda ac}{3a-c}}v^{12}u^1+\sqrt{\frac{3a-c}{\lambda}} \ (v^1u^{23}+v^1u^{45}+v^2u^{25}+v^2u^{34})
    -2a u^{124}-2c u^{135}.
\end{align*}
Thus, for both families, the $\G2$-structure on the singular orbit coincides. 

Around $t=0$, for $a>0,c<0$ and $\mu=1$ in \eqref{eqn:initialcond_0} we get the 2-parameter family of solutions $\Psi_{a,c}=\{q_1(t),\dots,q_8(t)\}$, where
\begin{align*}
q_1(t)&=a+t^2\left(-\frac{3\lambda^2a \ (5a-c)}{8 (3a-c)}+\frac{\sqrt{-\lambda ac (3a-c)}}{2c}\right)+{\mathcal{O}}(t^4),\\
q_2(t)=q_3(t)&= t\left(\lambda\sqrt{-ac}+\sqrt{\frac{3a-c}{\lambda}}\right)+{\mathcal{O}}(t^3),\\
q_4(t)&=c+t^2 \left(-\frac{\lambda^2c \ (9a-5c)}{8(3a-c)}+\frac{\sqrt{-\lambda ac(3a-c)}}{2a}-\sqrt{\frac{-\lambda ac}{3a-c}}\right)+{\mathcal{O}}(t^4),\\
q_5(t)&=-3a+t^2 \left(\frac{9\lambda^2a \ (5a-c)}{8(3a-c)}-\frac{\sqrt{-\lambda ac(3a-c)}}{2c}+\sqrt{\frac{-\lambda ac}{3a-c}}\right)+{\mathcal{O}}(t^4),\\
q_6(t)=q_7(t)&=  t\left(-3\lambda\sqrt{-ac}-\sqrt{\frac{3a-c}{\lambda}}\right)+{\mathcal{O}}(t^3), \\
q_8(t)&=-3c+t^2 \left(\frac{3\lambda^2c \ (9a-5c)}{8(3a-c)}-\frac{\sqrt{-\lambda ac(3a-c)}}{2a}\right)+{\mathcal{O}}(t^4).
\end{align*}

\begin{rem}
    For all permissible values of $a,c$ tested numerically, the solution $\Psi_{a,c}$ becomes degenerate after a short time.  We therefore do not expect complete solutions in this family.
\end{rem}

For $\mu=-1$ and $a>0,c<0$, we obtain a 3-parameter family $\eta_{a,c,\nu}$. The Taylor series expansion of $\eta_{a,c,\nu}=\{q_1(t),\dots,q_8(t)\}$ has the following form around $t=0$:
\begin{align*}
    q_1(t)&=a+t^2\left(-\frac{3\lambda^2a \ (5a-c)}{8 (3a-c)}-\frac{\sqrt{-\lambda ac(3a-c)}}{2c}\right)+{\mathcal{O}}(t^4),\\
q_2(t)&=t\left(-\lambda\sqrt{-ac}+\sqrt{\frac{3a-c}{\lambda}}\right)-t^2\nu+{\mathcal{O}}(t^3),\\
q_3(t)&= t\left(-\lambda\sqrt{-ac}+\sqrt{\frac{3a-c}{\lambda}}\right)+t^2\nu+{\mathcal{O}}(t^3),\\
q_4(t)&=c+t^2 \left(-\frac{\lambda^2c \ (9a-5c)}{8(3a-c)}-\frac{\sqrt{-\lambda ac(3a-c)}}{2a}+\sqrt{\frac{-\lambda ac}{3a-c}}\right)+{\mathcal{O}}(t^4),
\end{align*}
\vspace{-10pt}
\begin{align}
  q_5(t)&=-3a+t^2 \left(\frac{9\lambda^2a \ (5a-c)}{8(3a-c)}+\frac{\sqrt{-\lambda ac(3a-c)}}{2c}-\sqrt{\frac{-\lambda ac}{3a-c}}\right)+{\mathcal{O}}(t^4), \label{init_eta_ac} \\
q_6(t)&= t\left(3\lambda\sqrt{-ac}-\sqrt{\frac{3a-c}{\lambda}}\right)-t^2 \left(\frac{\sqrt{3a-c}+3\lambda^{3/2}\sqrt{-ac}}{\sqrt{3a-c}-\lambda^{3/2}\sqrt{-ac}}\right)\nu +{\mathcal{O}}(t^3),\notag \\
q_7(t)&=  t\left(3\lambda\sqrt{-ac}-\sqrt{\frac{3a-c}{\lambda}}\right)+t^2 \left(\frac{\sqrt{3a-c}+3\lambda^{3/2}\sqrt{-ac}}{\sqrt{3a-c}-\lambda^{3/2}\sqrt{-ac}}\right)\nu+{\mathcal{O}}(t^3), \notag \\
q_8(t)&=-3c+t^2 \left(\frac{3\lambda^2c \ (9a-5c)}{8(3a-c)}+\frac{\sqrt{-\lambda ac(3a-c)}}{2a}\right)+{\mathcal{O}}(t^4). \notag
\end{align}

\noindent
Sending $\nu$ to $-\nu$ in $\eta_{a,c,\nu}$ swaps two isotropy summands, namely $q_2\leftrightarrow q_3$ and $q_6 \leftrightarrow q_7$. Thus we can always assume $\nu\geq 0$. If $\nu=0$, we have $q_2=q_3,q_6=q_7$ up to order 1. Since \eqref{eqn:det_M} has no other positive eigenvalues, invariance of \eqref{eqns:gen_ng2} under this involution implies that $q_2=q_3,q_6=q_7$ at all higher orders and hence throughout its maximal interval of existence. Thus, when $\nu=0$, the family $\eta_{a,c,0}$ has an extra $\U(1)$-invariance.  

\begin{thm}\label{thm:complete-31} For $a>0$, $c<0$, and $\nu\in \R$ the family
$\eta_{a,c,\nu}$ is a real-analytic three-parameter family of solutions
of \eqref{eqns:gen_ng2}
extending smoothly across the singular orbit with weights $(3,-1)$. The initial conditions for $\eta_{a,c,\nu}$ are
\begin{align*}
    q_1(0)&=a, \ q_2(0)=q_3(0)=q_6(0)=q_7(0)=0, \ q_4(0)=c, \ q_5(0)=-3a, \ q_8(0)=-3c, \ q_3''(0)=-q_2''(0)=2\nu.
\end{align*}
The parameter $\nu$ is a symmetry-breaking parameter. If $\nu=0$, the solution has an additional $\U(1)$ symmetry and $q_2=q_3,q_6=q_7$. 
\end{thm}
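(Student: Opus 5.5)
Proving Theorem \ref{thm:complete-31} comes down to checking the hypotheses of the resonant case of Theorem \ref{thm:IVPsoln} for the regularised system \eqref{eqns:yprime}, and then adding a symmetry argument. First I will confirm that the ansatz \eqref{eqns:q_in_y_t0} is compatible with the Eschenburg--Wang parity conditions (i)--(v). Substituting it into \eqref{eqn:phi_t0} with $(m,n)=(3,-1)$, every coefficient then has the parity required by \eqref{eq:EW-smoothness}. The relations $q_5(0)=-3a$ and $q_8(0)=-3c$ are exactly what kills the $1/t$ terms in the coefficients of $v^2u^{24}$ and $v^2u^{35}$. Hence any smooth (indeed real-analytic) solution $y(t)$ of \eqref{eqns:yprime} yields a $\G2$-structure extending smoothly over the singular orbit. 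The $p_i$ are recovered from \eqref{eq:p-from-q}, which makes sense because $Q>0$ near $t=0$ whenever $a>0$, $c<0$, by the remark on $3a-c>0$.

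Next I will check conditions (i) and (ii) at the point $y_0=(b_1,\dots,b_8)$ of \eqref{eqn:initialcond_0} with $\mu=-1$. The equation $M_{-1}(y_0)=0$ is a direct substitution. The determinant \eqref{eqn:det_M} shows that $h\,\Id-d_{y_0}M_{-1}$ is invertible for every $h\in\mathbb N$ except $h=1$. At $h=1$ the factor $(n-1)$ is simple, so it remains to show that the kernel is exactly one-dimensional, by computing the rank of $\Id-d_{y_0}M_{-1}$. I expect the kernel vector to be $v\propto(0,-1,1,0,0,-\kappa,\kappa,0)$ with $\kappa=\frac{\sqrt{3a-c}+3\lambda^{3/2}\sqrt{-ac}}{\sqrt{3a-c}-\lambda^{3/2}\sqrt{-ac}}$, matching the $t^2$ terms in \eqref{init_eta_ac}.

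The step I expect to be the main obstacle is the solvability of the order-one Taylor recursion, i.e.\ showing that the inhomogeneous term at order $t^1$ lies in the image of $\Id-d_{y_0}M_{-1}$. I would attack it by symmetry rather than by brute force. The involution $\sigma\colon q_2\leftrightarrow q_3$, $q_6\leftrightarrow q_7$ (equivalently $p_2\leftrightarrow p_3$) preserves \eqref{eqns:gen_ng2} and fixes $y_0$, because $b_2=b_3$ and $b_6=b_7$. So $d_{y_0}M_{-1}$ commutes with $\sigma$, and I need the following:
\begin{itemize}
\item $v$ is $\sigma$-anti-invariant;
\item the $\sigma$-invariant part of $\Id-d_{y_0}M_{-1}$ is invertible;
\item the order-one forcing term is $\sigma$-invariant, since it is built from $\sigma$-invariant data.
\end{itemize}
Then the forcing lies in the $\sigma$-invariant subspace, on which the operator is invertible, so the recursion is solvable with $y_1=y_1^0+\nu v$. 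Theorem \ref{thm:IVPsoln} then gives the one-parameter family, with $\nu=y_3'(0)$ and $q_3''(0)=-q_2''(0)=2\nu$. Real-analyticity and smooth dependence on $(a,c,\nu)$ follow because the right-hand side is real-analytic near $(0,y_0)$, using the standard analytic version of the singular IVP (cf.\ \cite{Eschenburg-Wang}).

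Finally, the statement about $\nu$ is proved by uniqueness. By the $\sigma$-equivariance above, $\sigma(\eta_{a,c,\nu})$ is again a smooth solution with the same $y_0$ and with $\nu$ replaced by $-\nu$, so $\sigma\eta_{a,c,\nu}=\eta_{a,c,-\nu}$. When $\nu=0$, both $\eta_{a,c,0}$ and $\sigma\eta_{a,c,0}$ solve the same singular IVP with the same free coefficient. Uniqueness in Theorem \ref{thm:IVPsoln} then gives $q_2=q_3$ and $q_6=q_7$ on the maximal interval. As in the proof of Theorem \ref{thm:complete-11}, this fixed locus is the fixed set of the extra circle action rotating the two isotropy summands. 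So the solution acquires the additional $\U(1)$-symmetry, while for $\nu\neq0$ the $t^2$ coefficients already break it.
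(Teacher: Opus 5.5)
Your route is the paper's: take the $\mu=-1$ branch at $y_0$ from \eqref{eqn:initialcond_0} and apply the resonant case of Theorem~\ref{thm:IVPsoln} at $n=1$, with free datum $\nu=y_3'(0)$. Then use the involution $\sigma\colon q_2\leftrightarrow q_3,\ q_6\leftrightarrow q_7$ together with uniqueness to get $\sigma\eta_{a,c,\nu}=\eta_{a,c,-\nu}$ and the extra $\U(1)$-invariance at $\nu=0$.

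One step fails as written. You claim that substituting the ansatz \eqref{eqns:q_in_y_t0} into \eqref{eqn:phi_t0} gives every coefficient the Eschenburg--Wang parity, so that \emph{any} smooth solution $y$ extends. For a general smooth $y$ this is false. For example, $q_1=a+t^2y_1$ is even only if $y_1$ is, and $q_2-q_3=t(y_2-y_3)$ is even only if $y_2-y_3$ is odd. Parity is a property of the particular solution, not of the ansatz.

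Proving it needs an observation you do not make. The regularised system has the form $ty'=G(t^2,y)$ with $G$ $\sigma$-equivariant. Writing $y=\sum_k y^{(k)}t^k$, the Taylor recursion is therefore equivariant under $y^{(k)}\mapsto(-1)^k\sigma y^{(k)}$. Three further facts then close the argument:
\begin{itemize}
\item $\sigma y_0=y_0$;
\item $\sigma v=-v$;
\item all coefficients are determined by $\nu$, since by \eqref{eqn:det_M} the operator $n\Id-d_{y_0}M_{-1}$ is invertible for $n\ge 2$.
\end{itemize}
Together these force $\sigma y^{(k)}=(-1)^k y^{(k)}$, i.e.\ $\sigma y(-t)=y(t)$. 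That is exactly (iii)--(v), and (i)--(ii) follow by writing \eqref{eq:p-from-q} in the $y$-variables.

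The same $t\mapsto -t$ invariance also shows that the order-one forcing term vanishes identically. Hence $(\Id-d_{y_0}M_{-1})y^{(1)}=0$ and $y^{(1)}=\nu v$ directly. Your $\sigma$-splitting argument for solvability is correct but not needed. Likewise, the simple root of \eqref{eqn:det_M} already gives a one-dimensional kernel without a rank computation. The paper's own discussion does not check the parity of the constructed solutions either, so with this repair your argument is more complete than the text it reconstructs.
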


\begin{rem}\label{inv_ca}
    The system \eqref{eqns:gen_ng2} is invariant under the homothety $$t\mapsto st,\quad \lambda\mapsto\lambda/s,\quad q\mapsto s^3q,\quad p\mapsto s^2p,\quad (a,c)\mapsto s^3(a,c),\quad \nu\mapsto s\nu .$$ Fixing $\lambda$ removes this freedom; the invariant shape coordinates are $\rho=c/a$, $\sigma=\nu/a^{1/3}$.
\end{rem}

\subsection{Singular orbit with weights $(2k+1,1-2k)$}\label{section:sing_k}
For $(m,n)=(2k+1,1-2k)$, $k\in\mathbb N$ we can express the coefficients $q_i$, $i=1\dots 8$ as the following Taylor series expansions around $t=0$ for some real constants $a,c$ and smooth functions $y_i(t)$, $i=1\dots 8$,
\begin{align}
\begin{split}\label{eq:q_k}
  q_1(t)&=(2k-1)a+t^2y_1(t),\ \
    q_2(t)=ty_2(t),\ \
    q_3(t)=ty_3(t),\ \
    q_4(t)=(2k-1)c+t^2y_4(t),\\
    q_5(t)&=-(2k+1)a+t^2y_5(t),\ \
    q_6(t)=ty_6(t),\ \
    q_7(t)=ty_7(t),\ \
    q_8(t)=-(2k+1)c+t^2y_8(t).     
\end{split}
\end{align}

We can carry out computations similar to those for the $(3,-1)$ case and obtain families of solutions that extend over the singular orbit with weights $(2k+1,1-2k)$. Again, the system of ODEs in \eqref{eqns:gen_ng2} reduces to the form in Theorem \ref{thm:IVPsoln} for $y_i$. We find that the solutions in \eqref{eq:q_k} extend to the singular orbit $(2k+1,1-2k)$ if $y_i(0)=b_i$ for $i=1,\dots,8$, where, for $a>0$, $c<0$, $\mu\in\{\pm1\}$, and $\Delta_k=(2k+1)a-(2k-1)c$,
\begin{align*}
b_1
&=-\frac{(2k-1)\lambda^2a
\bigl(5(2k+1)a-3(2k-1)c\bigr)}
{8\Delta_k}
+\frac{\mu}{2c}\sqrt{-\lambda ac\Delta_k},\\
b_2=b_3
&=(2k-1)\lambda\mu\sqrt{-ac}
+\sqrt{\frac{\Delta_k}{\lambda}},\\
b_4
&=-\frac{(2k-1)\lambda^2c
\bigl(3(2k+1)a-5(2k-1)c\bigr)}
{8\Delta_k}
+\frac{\mu}{2a}\sqrt{-\lambda ac\Delta_k}
-\mu\sqrt{\frac{-\lambda ac}{\Delta_k}},\\
b_5
&=\frac{(2k+1)\lambda^2a
\bigl(5(2k+1)a-3(2k-1)c\bigr)}
{8\Delta_k}
-\frac{\mu}{2c}\sqrt{-\lambda ac\Delta_k}
+\mu\sqrt{\frac{-\lambda ac}{\Delta_k}},\\
b_6=b_7
&=-(2k+1)\lambda\mu\sqrt{-ac}
-\sqrt{\frac{\Delta_k}{\lambda}},\\
b_8
&=\frac{(2k+1)\lambda^2c
\bigl(3(2k+1)a-5(2k-1)c\bigr)}
{8\Delta_k}
-\frac{\mu}{2a}\sqrt{-\lambda ac\Delta_k}.
\end{align*}

Hence, we again obtain two families. To determine the uniqueness of solutions for the above initial conditions, we compute for $\mu=1$ 
\begin{align*}
    \det( n \rm{Id}-d_{y_0}M_{-1})&=n(n+1)(n+2)^4((n+1)^2+4k^2),
\end{align*} 
hence $n\operatorname{Id}-d_{y_0}M_{-1}$ is invertible for every
$n\in\mathbb N$.  The singular IVP therefore has a unique solution
for each admissible pair $(a,c)$, giving a two-parameter family
$\Psi^k_{a,c}$ which extends smoothly across the singular orbit $(2k+1,1-2k)$.

When $\mu=-1$ we get 
\begin{align}\label{eqn:det_k}
    \det( n \rm{Id}-d_{y_0}M_{-1})&=(n-(2k-1))(n+1)(n+2)^5(n+(2k+1)),
\end{align} 
which vanishes for $n=2k-1$.  Thus Theorem~\ref{thm:IVPsoln} does not give
uniqueness in this case.  The kernel is one-dimensional, so the solution is
determined by one additional datum of order $2k-1$. We get a unique solution $\eta^k_{a,c,\nu}$ only after fixing $\nu=y_3^{(2k-1)}(0)$. 
The metric on the 5-dimensional singular orbit spanned by $u^i,i=1,\dots,5$ is given by 
\begin{align*}
   \frac{1}{\rho} \ {\rm{diag}}\left((4k+1)^2\rho^3,a,-c,a,-c\right),  \qquad \rho=\sqrt{\frac{\lambda |ac|}{(2k+1)a-(2k-1)c}}
\end{align*}
Therefore, we again consider only the branch where $a>0,c<0$; otherwise, the metric is not positive-definite.

Consider the involution
\[
\sigma:
(p_2,p_3;q_2,q_3,q_6,q_7)
\longmapsto
(p_3,p_2;q_3,q_2,q_7,q_6),
\]
with all the remaining coefficients fixed. 
If $\nu=0$, then for orders less than $2k-1$, uniqueness of
the Taylor recursion implies that every coefficient is
$\sigma$-invariant. At order $2k-1$, setting $\nu=0$ removes
the only $\sigma$-anti-invariant kernel component. At all subsequent
orders, the operators $n\operatorname{Id}-\d_{y_0}M_{-1}$
are invertible, and the $\sigma$-equivariance of the nearly parallel
equations implies inductively that all Taylor coefficients remain
$\sigma$-invariant. Hence $q_2=q_3, q_6=q_7$ throughout its maximal interval of existence.

\begin{thm}\label{thm:nu_k_symmetry_breaking}
For $k\geq1$, the family $\eta^{k}_{a,c,\nu}$ for $a>0,c<0, \nu\in\R$ defines a family of solutions of \eqref{eqns:gen_ng2} extending smoothly over the
singular orbit with weights $(2k+1,1-2k)$.
The parameter $\nu$ is the symmetry-breaking parameter for a distinguished
additional $\U(1)$-action commuting with the $\SO(4)$-action. 

\end{thm}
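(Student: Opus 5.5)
The plan follows the pattern of the $(3,-1)$ case (Theorem~\ref{thm:complete-31}), with the exponent $1$ replaced by $2k-1$ throughout.

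\textbf{Step 1: reduction to a singular IVP.} Substitute the ansatz \eqref{eq:q_k} into \eqref{eqns:gen_ng2}. Recover $p_1,p_2,p_3$ from \eqref{eq:p-from-q}; in the variables $y_i$ these become $P_1,P_2,P_3$, in which the factors of $t$ in $q_2+q_7$ and $q_3+q_6$ cancel. Then check that the resulting system has the form $y'=\frac1t M_{-1}(y)+M(t,y)$ required by Theorem~\ref{thm:IVPsoln}. This amounts to verifying that each numerator $N_i$ vanishes to the order needed to absorb the prescribed powers of $t$. The verification uses the weight relation $(2k+1)-(2k-1)=2$ between the constant terms $\pm(2k\pm1)a$ and $\pm(2k\pm1)c$, together with the maximal-torus condition $\pm2\pm m\pm n=0$.

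\textbf{Step 2: the fixed point and its linearisation.} Confirm that the displayed $b_i$ satisfy $M_{-1}(b)=0$. Compute the linearisation $d_{y_0}M_{-1}$ on the branch $\mu=-1$ and obtain the characteristic polynomial \eqref{eqn:det_k}. Its only positive integer root is $n=2k-1$, with a one-dimensional kernel. The key observation is that this kernel is $\sigma$-anti-invariant: to leading order it is spanned by the direction $(y_2,y_3,y_6,y_7)\propto(-1,1,-\kappa,\kappa)$, as in \eqref{init_eta_ac}. The system is $\sigma$-equivariant, because $\sigma$ is induced by the automorphism of $\SO(4)$ swapping $u_2,u_4$ with $u_3,u_5$, which preserves $H$ and $K$.

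\textbf{Step 3: solvability at order $2k-1$ (the main obstacle).} One must show that the Taylor recursion is solvable at order $2k-1$, i.e.\ that the right-hand side $F_{2k-1}$ lies in the image of $(2k-1)\Id-d_{y_0}M_{-1}$. I would handle this by symmetry rather than by direct computation.
\begin{itemize}
\item All lower-order coefficients are uniquely determined and $\sigma$-invariant, so $F_{2k-1}$ is $\sigma$-invariant.
\item $d_{y_0}M_{-1}$ commutes with $\sigma$, so the operator preserves the $\pm1$ eigenspaces of $\sigma$.
\item The kernel lies in the $-1$ eigenspace, so the restriction to the $+1$ eigenspace is invertible.
\end{itemize}
Hence a solution exists, unique up to adding $\nu v$, and Theorem~\ref{thm:IVPsoln} yields the family $\eta^k_{a,c,\nu}$, with $\nu=y_3^{(2k-1)}(0)$ after normalising $v$. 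Smoothness across the singular orbit then follows from the Eschenburg--Wang criterion \eqref{eq:EW-smoothness}. The parity conditions are built into \eqref{eq:q_k}, and one checks that the recursion preserves the parities of $y_2\pm y_3$ and $y_6\pm y_7$. Positivity of the metric on the singular orbit requires $a>0$, $c<0$.

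\textbf{Step 4: symmetry breaking.} This is the argument preceding the statement. For $\nu=0$, all Taylor coefficients are $\sigma$-invariant, by invertibility at orders $n\neq 2k-1$ and by the choice of $\nu$ at order $2k-1$. By uniqueness, the solution stays in $\operatorname{Fix}(\sigma)$. Identify $\operatorname{Fix}(\sigma)$ with the fixed set of the circle rotating $e^3+ie^5$ and $e^4+ie^6$ in the $(3,-1)$-adapted sense, a circle that commutes with $\SO(4)$. For $\nu\neq0$, the coefficient of order $2k-1$ already violates $\sigma$-invariance, so this circle symmetry is broken.
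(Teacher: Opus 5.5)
Your proposal follows essentially the same route as the paper. It reduces to the singular IVP of Theorem~\ref{thm:IVPsoln} on the $\mu=-1$ branch, uses the factorisation \eqref{eqn:det_k} to get a one-dimensional kernel at order $2k-1$ carrying $\nu$, and then uses $\sigma$-equivariance plus uniqueness to force $q_2=q_3$, $q_6=q_7$ when $\nu=0$. One correction on Step~2: $\sigma$ is induced by $u^2\leftrightarrow u^4$, $u^3\leftrightarrow u^5$ together with $e^1,e^2\mapsto-e^1,-e^2$, not by exchanging the two $\mathfrak{su}(2)$ factors (which would move $K$); in any case the equivariance can be checked directly on \eqref{eqns:gen_ng2}.

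Your Step~3 usefully makes explicit the solvability of the recursion at order $2k-1$, which the paper only uses implicitly. The anti-invariance of the kernel that it rests on, which you infer from the $k=1$ expansion, does hold for every $k$: the $\sigma$-anti-invariant $2\times 2$ block of $d_{y_0}M_{-1}$, in the variables $y_2-y_3$ and $y_6-y_7$, has trace $-2$ and determinant $1-4k^2$, hence eigenvalues $2k-1$ and $-(2k+1)$.
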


\section{Reflection and compact solutions}
\label{sec:reflection}

In this section, we aim to turn a solution that is smooth at one singular orbit into a
compact solution by reflection.  This is the analogue of the Doubling Lemma of \cite[Lemma 5.19]{Haskins-Lorenzo} in the six-dimensional nearly K\"ahler case. We establish a sufficient condition for a solution of \eqref{eqns:gen_ng2} that is complete at one end to close up smoothly at the other end after being reflected by one of the involutive symmetries listed in Proposition \ref{prop:discrete_symm}. 

With $u=(q_1,\ldots,q_8)$, write the system \eqref{eqns:gen_ng2} as
\begin{align}\label{eqn:system_F}
    \dot u=F(u),\qquad F:\;\mathcal U\subset\mathbb R^8\to\mathbb R^8.
\end{align}
It is defined and real‑analytic on the open set
  $$\mathcal U = \big\{ u\mid (q_2+q_7),(q_3+q_6)>0,\ (q_4+q_5)<0 \big\}.$$
  
Let $\tau$ be an involutive, time‑reversing symmetry of the system. We use the same notation to denote its action on $\mathbb R^8$. Let the fixed locus $\operatorname{Fix}(\tau)$ be the codimension-$k$ linear subspace defined by the vanishing of the linear functionals $g_i$, $i=1,\dots,k$, and let $L_\tau$ be the constant $k\times8$ matrix whose $i$th row is given by $g_i$. Thus 
\begin{equation*}
\operatorname{Fix}(\tau)=\big\{u\mid g_i(u)=0,\ i=1,\ldots,k\big\}.
\end{equation*}
From Proposition \ref{prop:discrete_symm} when $\tau=\tau_4$, the shooting parameters are
\[
        z=(a,c,\nu,T)\in\mathbb R^4.
\]
Here by Theorem \ref{thm:complete-31}, the parameters $a,c,\nu$ determine the local solution
$\eta_{a,c,\nu}$ emanating from the singular orbit at $t=0$,
while $T$ is the proposed reflection time. When $\tau=\tau_2\circ\tau3=:\tau_{23}$ the solution we find is
$U(1)$-invariant so we impose $\nu=0$ (Theorem \ref{thm:complete-31}), and
the shooting parameters reduce to $z=(a,c,T)\in\mathbb R^3$.

Let $V$ denote the coefficient space of invariant $3$-forms on a
principal orbit. For a time-reversing involution $\tau$, write
\[
        \mathcal F_\tau=\operatorname{Fix}(\tau)\subset V
\]
for its fixed locus. Since $\tau$ acts linearly on $V$,
$\mathcal F_\tau$ is defined by a finite collection of linear
equations. We denote by
\[
        L_\tau:V\longrightarrow\mathbb R^{r_\tau}
\]
the constant linear map formed by a maximal independent collection of
these equations, so that $\mathcal F_\tau=\ker L_\tau$. The corresponding shooting map is
\begin{align}\label{eq:shooting-map}
    \mathcal S_\tau(a,c,\nu,T)
   =L_\tau\bigl(\eta_{a,c,\nu}(T)\bigr).
\end{align}
Thus
\[
        \mathcal S_\tau(a,c,\nu,T)=0
        \quad\Longleftrightarrow\quad
        \eta_{a,c,\nu}(T)\in\mathcal F_\tau.
\]
Its zeros are precisely the trajectories which meet
$\operatorname{Fix}(\tau)$ at time $T$. 

Let $\mathfrak P$ denote the solution
space of \eqref{eqns:gen_ng2} for $t\in(0,T]$. We now state a sufficient condition under which $\Phi\in \mathfrak{P}$ extends smoothly to a singular orbit $K^u_{p,q}$ and closes smoothly at the other singular end $\tau(K^u_{p,q})$, so as to obtain a compact cohomogeneity-one solution. 

\begin{lemma}\label{lemma:timereversing}
    \textup{(Reflection principle).} Let $\tau$ be a time-reversing
    symmetry of \eqref{eqns:gen_ng2}, and suppose that
    \begin{enumerate}
        \item[(i)] $\Phi(t)\in\mathfrak P$ for all $t\in(0,T]$ and closes up smoothly on the singular orbit at $t=0$,
        \item[(ii)] $\mathcal S_\tau(a,c,\nu,T)=0$.
    \end{enumerate}
    
    \noindent
    Then $\widetilde\Phi(t):=\tau(\,\Phi(2T-t))$ is a nearly $G_2$ trajectory defined on $[T,2T]$ with $\widetilde\Phi(T)=\Phi(T)$. The solution
    $$\Psi(t)=\begin{cases}\Phi(t),&0\le t\le T,\\ \tau(\,\Phi(2T-t)),&T\le t\le 2T,\end{cases}$$
    determines a smooth complete nearly $G_2$-structure.
\end{lemma}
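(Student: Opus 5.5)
The plan is to check three things in turn: that $\widetilde\Phi$ solves the system, that $\Psi$ is smooth across $t=T$, and that $\Psi$ closes smoothly at $t=2T$.

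\emph{$\widetilde\Phi$ is a solution.} Time-reversing symmetries in Proposition~\ref{prop:discrete_symm} act linearly on the coefficients and satisfy $\tau\circ F=-F\circ\tau$ on $\mathcal U$; we take this as the meaning of a time-reversing symmetry of $\dot u=F(u)$. Then
\[
\tfrac{d}{dt}\widetilde\Phi(t)=-\tau\dot\Phi(2T-t)=-\tau F(\Phi(2T-t))=F(\tau\Phi(2T-t))=F(\widetilde\Phi(t)).
\]
So $\widetilde\Phi$ is a solution on $[T,2T)$. It lies in $\mathcal U$ because $\tau$ preserves $\mathcal C$ and the Riemannian chamber. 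This holds up to the orientation convention: $\tau$ flips the sign of $p$ but also reverses $t$, so the product $dt\wedge\omega$, and hence $\varphi$, is preserved. Condition (ii) gives $\Phi(T)\in\operatorname{Fix}(\tau)$, hence $\widetilde\Phi(T)=\tau\Phi(T)=\Phi(T)$.

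\emph{Smoothness at $t=T$.} $\Psi$ is continuous at $T$. Near $T$, both $\Phi$ and $\widetilde\Phi$ solve the same real-analytic ODE $\dot u=F(u)$ in $\mathcal U$ with the same value at $t=T$. By uniqueness, $\widetilde\Phi$ is the continuation of $\Phi$ past $T$, so $\Psi$ is real-analytic across $T$ and the $\G2$-form $dt\wedge\omega+\Omega_+$ is smooth there. In passing this shows $\Phi$ extends to $(0,2T)$ with $\Phi(T+s)=\tau\Phi(T-s)$.

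\emph{Closing at $t=2T$.} Write $s=2T-t$. Near $t=2T$ we have $\Psi(t)=\tau(\Phi(s))$ with $s\to0^+$. The main point is that $\tau$ is induced by a geometric diffeomorphism $f_\tau$ of $I\times \SO(4)/\Z_2^2$ that pulls $\varphi$ back to $\varphi$. For $\tau_4$, $f_\tau$ combines conjugation by an element normalising $H$, which exchanges the $i$ and $j$ axes, with $t\mapsto -t$. For $\tau_{23}$, it combines the factor swap with $t\mapsto-t$. Under $f_\tau$ the $\G2$-structure near $t=2T$ is the pullback of the structure of $\Phi$ near $s=0$, with the principal orbit identified via an automorphism of $G$ preserving $H$. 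Hence it extends smoothly over the singular orbit with isotropy $\tau(K)$, for instance $C^i_{3,-1}\mapsto C^j_{1,-3}$ for $\tau_4$, as noted after Proposition~\ref{prop:discrete_symm}. Equivalently, the Eschenburg--Wang conditions \eqref{eq:EW-smoothness} for $\tau(K)$ are the images under $\tau$ of those for $K$, and $\Phi$ satisfies the latter.

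\emph{Completeness.} Gluing the two tubular neighbourhoods to the principal part gives the compact manifold with group diagram $H\subset\{K,\tau(K)\}\subset G$. The result is a smooth nearly $\G2$-structure there, and compactness gives completeness.

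The main obstacle is making the last step rigorous. One must verify explicitly that each time-reversing $\tau$ used is realised by an automorphism of $\SO(4)$ normalising $H$, combined with $t\mapsto -t$, which carries the slice representation at $K$ to that at $\tau(K)$. One must also check that this automorphism acts on the basis $e^1,\dots,e^6$ exactly as the stated permutations and sign changes of $p_i,q_i$. I would first write down the quaternion conjugation realising $\tau_4$ ($i\mapsto j$, $j\mapsto i$, $k\mapsto -k$ on both factors) and check its action on $e^{ij}$ against Proposition~\ref{prop:discrete_symm}.
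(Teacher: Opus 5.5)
Your proposal is correct and follows the paper's argument. Reversibility $F(\tau u)=-\tau F(u)$ together with autonomy makes $\tau(\Phi(2T-t))$ a solution, uniqueness at the regular orbit $t=T$ glues it to $\Phi$, and near $t=2T$ the structure is the $\tau$-image of the smooth cap at $t=0$; the paper asserts this last step in one line, without exhibiting the geometric realisation of $\tau$ that you rightly flag.

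One correction for that check. $\tau_4$ sends $q_1\leftrightarrow q_8$, i.e.\ $e^{135}\leftrightarrow e^{246}$, so it must include the exchange of the two $\Sp(1)$ factors (an outer automorphism of $\SO(4)$, as in $\tau_3$), composed with an exchange of the $e^{12}$- and $e^{56}$-planes. Your candidate, conjugation by $(g,g)$ with $g=(i+j)/\sqrt2$, is inner and preserves each factor, so it sends $C^i_{3,-1}$ to $C^j_{3,-1}$ rather than $C^j_{1,-3}$.
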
 

\begin{proof}
Since the system \eqref{eqn:system_F} is autonomous, meaning that its
right-hand side has no explicit dependence on $t$, and reversible with
respect to $\tau$, that is, 
 $F(\tau u)=-D\tau_u F(u)$, the curve $t\mapsto\tau(\Phi(2T-t))$ is also a solution.  It agrees with $\Phi$
at $t=T$, and uniqueness at this regular orbit shows that the two
solutions agree wherever both are defined.  Near $t=2T$ the reflected
solution is the image under $\tau$ of the smooth cap at $t=0$, so it
extends over the second singular orbit. Moreover, since the orbital volume $V$ is even about $T$, $V'(T)=0$, and hence the maximal-volume orbit for $\Phi,\tau(\Phi)$ occurs at $t=T$.
\end{proof}

\noindent

The topology of the resulting cohomogeneity-one manifold is governed by the singular orbits at both ends. 
\begin{itemize}
    \item The $\tau_4$ reflection takes the singular isotropy group $K^i_{3,-1}$ at $t=0$ to $K^j_{1,-3}$ at $t=2T$ and one recovers
the Berger diagram. 
\item For $\tau_{23}\coloneqq \tau_2\circ\tau_3$, the two collapsing circles lie on
the same imaginary quaternion axis, as the involution takes the singular isotropy group $K^i_{3,-1}$ at $t=0$ to $K^i_{1,-3}$, and the resulting manifold is $M_{23}$, as described in Section \ref{section:coho-1action}.
\end{itemize}

\begin{rem} The Berger homogeneous solution can be obtained via the reflection principle using the involution $\tau_4$. It is given by 
$$\Psi_B(t)=\begin{cases}\eta_{a,c,\nu}(t),&0\le t\le \pi/6,\\ \tau_4(\,\eta_{a,c,\nu}(\pi/3-t)),&\pi/6\le t\le \pi/3,\end{cases}$$
 for 
\[(a,c,\nu)=(\sqrt{5}/{20},\>-{3\sqrt{5}}/{100},\>{\sqrt{5}}/{50})\]
and $\lambda=6/\sqrt{5}$.
\end{rem}

Our aim in the subsequent sections is to find parameters $(a,c,\nu,T)$ such that $\eta_{a,c,\nu}(T)\in \operatorname{Fix}(\tau)$ for $\tau=\tau_4$ and $\tau=\tau_{23}$ in order to obtain complete $\SO(4)$-invariant nearly parallel $\G2$-structures on $B^7$ and $M_{23}$, respectively. The main tool we use to prove the existence of such parameters is the Krawczyk inclusion theorem, as stated in Theorem \ref{thm:krawczyk} below. We first use numerical computation to obtain approximate parameters $(a_0,c_0,\nu_0,T_0)$ in $C$ for which $\|\mathcal S_\tau(a_0,c_0,\nu_0,T_0)\|$ is sufficiently close to zero. Then Krawczyk's theorem implies that there exists a genuine fixed point in $(a_0,c_0,\nu_0,T_0)+[-\epsilon,\epsilon]^4$ for some $\epsilon>0$. 

\subsection{Validated solutions}
\label{sec:validation}

We now explain the computer-assisted part of the proof. The aim is to
turn an approximate numerical solution of the shooting problem into a
genuine solution of the nearly parallel $\G2$-equations.

We have already seen from \eqref{eq:shooting-map} that a zero of $\mathcal S_\tau$ gives a trajectory which meets the fixed locus of the time-reversing symmetry $\tau$.  By the reflection principle Lemma \ref{lemma:timereversing}, this
trajectory then extends smoothly across the hypersurface $t=T$.

Let $\eta_{a,c,\nu}$ be the family from
Theorem \ref{thm:complete-31}, smooth at the singular orbit with slope
$(3,-1)$.  The two involutions from Proposition \ref{prop:discrete_symm} used here are $\tau_4$ and $\tau_{23}=\tau_2\tau_3$.  Their fixed sets are
given by
\begin{align}
 \operatorname{Fix}(\tau_4)
 &=\{q_1=q_8,\ q_2=-q_4,\ q_3=q_6,\ q_5=-q_7\},
 \label{eq:section5-fix-tau4}\\
 \operatorname{Fix}(\tau_{23})
 &=\{q_1=q_8,\ q_2=q_7,\ q_3=q_6,\ q_4=q_5\}.
 \label{eq:section5-fix-tau23}
\end{align}

For the $\tau_4$ reflection, define
\begin{equation}\label{eq:section5-shoot-tau4}
 \mathcal S_4(a,c,\nu,T)=
 \begin{pmatrix}
 q_1(T)-q_8(T)\\
 q_2(T)+q_4(T)\\
 q_3(T)-q_6(T)\\
 q_5(T)+q_7(T)
 \end{pmatrix},
 \qquad q(t)=\eta_{a,c,\nu}(t).
\end{equation}
Hence $\mathcal S_4(a,c,\nu,T)=0$ precisely when the trajectory meets
$\operatorname{Fix}(\tau_4)$ at time $T$.

In the $\tau_{23}$ case, we obtain that $\nu=0$. Thus, from Theorem \ref{thm:complete-31}, we have an additional $\U(1)$-symmetry, and
\begin{equation}\label{eq:section5-u1-identities}
        q_2=q_3,\qquad q_6=q_7
\end{equation}
throughout the interval of existence.  Thus the six variables $(q_1,q_2,q_4,q_5,q_6,q_8)$ form an invariant subsystem, on which the four equations in
\eqref{eq:section5-fix-tau23} reduce to three equations.  We therefore
define
\begin{equation}\label{eq:section5-shoot-tau23-u1}
 \mathcal S_{23}^{\U(1)}(a,c,T)=
 \begin{pmatrix}
 q_1(T)-q_8(T)\\
 q_2(T)-q_6(T)\\
 q_4(T)-q_5(T)
 \end{pmatrix},
 \qquad q(t)=\eta_{a,c,0}(t).
\end{equation}
A zero of this reduced map satisfies all four equations defining
$\operatorname{Fix}(\tau_{23})$.

Validation of the zeros of the fixed-set maps $\mathcal{S}_4, \mathcal{S}^{\rm{U}(1)}_{23}$ is achieved via Theorem \eqref{thm:krawczyk} below.  This theorem was formulated by Krawczyk in \cite{Krawczyk1969} and modified into an existence test by Moore \cite{Moore77}. Moreover, Rump \cite{RUMP198351} proved uniqueness of the solution within a neighbourhood of the approximate zero. The inclusion was further refined in \cite{neumaier1990interval, kearfott1996rigorous,Rump2010, krawczyk-19}. Recently, it was used by \cite{Vasquez} to obtain new cohomogeneity-one Einstein metrics on $\mathbb{CP}^n$ for $3\leq n\leq 7$.

We begin by explaining the basic idea. Let $F : \mathbb{R}^n \rightarrow \mathbb{R}^n$ be a $C^1$ function. We would like to solve the equation
\begin{equation}
    F(x)=0.
\end{equation}
Newton's method uses the operator
\begin{equation*}
N(x) = x - DF(x)^{-1}F(x). 
\end{equation*}
It is well known that if $F(x^*)=0$ and $DF(x^*)$ is nonsingular, then
$x^*$ is an attracting fixed point for $N$.
It turns out that the same is true if we replace $DF(x)^{-1}$ by a fixed matrix $Y$, which is sufficiently close to $DF(x^*)^{-1}$. The modified Newton operator is given by
\begin{equation}\label{eq:Nm}
N_m(x) = x - YF(x).
\end{equation}
If $U$ is homeomorphic to a closed finite-dimensional ball and if
$N_m(U) \subset U$,
then, by the Brouwer fixed-point theorem, there exists a solution $x_0 \in U$ of $N_m$, and the solution is unique if $N_m$ is a contraction on $U$. Since $Y$ is invertible, we obtain that $F(x_0) = 0$.

Now let $[x]\in \mathbb{IR}^n$ be a box in $\R^n$. Since
\[\operatorname{diam}([x]-YF([x])) \ge \operatorname{diam}([x])+\operatorname{diam}(YF([x]))\]
it is impossible to verify \eqref{eq:Nm} in a single interval evaluation. This shortcoming can be overcome by the Krawczyk operator. Let $x_0\in[x]$, and let $Y\in \mathbb{R}^{n\times n}$ be an invertible matrix. Then this operator is defined by
\begin{equation}
K(x_0, [x], F) := x_0 - YF(x_0) + (\mathrm{Id} - Y[DF([x])])([x] - x_0). 
\end{equation}
Observe that $N_m([x])\subset K(x_0,[x],F)$ and thus if $F(x^*)=0$ for some $x^*\in [x]$ then 
\[x^*=N_m(x^*)\subset K(x_0,[x],F).\] 
A more interesting result is the converse:

\begin{thm}[Krawczyk inclusion]\label{thm:krawczyk}
Let $F:\mathbb R^n\to\mathbb R^n$ be continuously differentiable
on the box $[x]\in \mathbb{IR}^n$. Let $x_0\in [x]$ and $Y$ be any invertible $n\times n$ matrix. If $$K(x_0, [x], F) \subset \mathrm{int}([x]),$$ then there exists a unique zero of $F$ in $[x]$.

\end{thm}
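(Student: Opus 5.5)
The plan is to prove the theorem through the mean value form of $F$ on the box, followed by a Brouwer fixed-point argument for existence and a contraction estimate for uniqueness. Write $K:=K(x_0,[x],F)$ and $N_m(x)=x-YF(x)$ as in \eqref{eq:Nm}. The first step is an enclosure: $N_m([x])\subset K$. Fix $x\in[x]$. Since $[x]$ is convex and $F$ is $C^1$ there, the mean value theorem in integral form gives
\begin{equation*}
F(x)-F(x_0)=J(x)(x-x_0),\qquad J(x)=\int_0^1 DF\bigl(x_0+s(x-x_0)\bigr)\,ds .
\end{equation*}
Each entry of $DF$ along this segment lies in the corresponding entry of the interval matrix $[DF([x])]$. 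Interval entries are convex, so every entry of $J(x)$ also lies in $[DF([x])]$. Therefore
\begin{equation*}
N_m(x)=x_0-YF(x_0)+(\Id-YJ(x))(x-x_0)
\end{equation*}
belongs to $x_0-YF(x_0)+(\Id-Y[DF([x])])([x]-x_0)=K$, by inclusion monotonicity of interval arithmetic.

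The second step gives existence. The map $N_m$ is continuous, and by the first step together with the hypothesis it sends the box $[x]$ into $K\subset\operatorname{int}[x]\subset[x]$. The box is homeomorphic to a closed ball, so Brouwer's theorem yields a point $x^*\in[x]$ with $N_m(x^*)=x^*$. This means $YF(x^*)=0$, and since $Y$ is invertible, $F(x^*)=0$.

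The third step, uniqueness, is where I expect the real work to lie. I would first show that strict inclusion forces every matrix $A\in\Id-Y[DF([x])]$ to satisfy $\rho(|A|)<1$. Write $[x]=m+[-r,r]$ with radius vector $r>0$. The radius of $K$ is at least $|A|r$ componentwise, because the width of a product of an interval matrix with the symmetric interval vector $[x]-x_0$ dominates $|A|\,r$. Strict inclusion in the interior then gives $|A|r<r$ componentwise, and the Perron--Frobenius theorem yields $\rho(|A|)<1$.

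Now suppose $x^*,y^*\in[x]$ are two zeros of $F$. The same mean value argument gives $0=F(x^*)-F(y^*)=J(x^*-y^*)$ with $J\in[DF([x])]$. Set $A=\Id-YJ$; then $x^*-y^*=A(x^*-y^*)$. Since $\rho(A)\le\rho(|A|)<1$, the only such vector is $0$, so $x^*=y^*$. The same bound also shows that $YJ$, and hence $J$, is nonsingular, which is consistent with the existence argument.

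The delicate points are all in this third step: the componentwise width estimate for interval matrix–vector products, the need to use the \emph{strict} interior inclusion to get $|A|r<r$ rather than only $\le$, and the positivity of $r$ required for Perron--Frobenius. If some component of $r$ vanished (a degenerate box), I would treat that component separately, since then $[x]$ is a lower-dimensional box and the argument restricts to it.
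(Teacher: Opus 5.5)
Your proof is correct and follows the route the paper sketches just before the theorem: the enclosure $N_m([x])\subset K(x_0,[x],F)$ together with Brouwer gives existence, and uniqueness comes from a contraction property that the paper only attributes to Rump. Your bound $|A|\,r\le\operatorname{rad}(K)<r$ for every $A\in\Id-Y[DF([x])]$ supplies that property, since it makes $N_m$ a contraction in the $r$-weighted maximum norm.

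Two small remarks. First, the radius estimate does not need $[x]-x_0$ to be symmetric, because $\operatorname{rad}(A[v])=|A|\operatorname{rad}([v])$ for any point matrix $A$. Second, the degenerate case $r_i=0$ cannot occur: it would force $\operatorname{int}([x])=\emptyset$, whereas $K\ni N_m(x_0)$ is nonempty.
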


This method is
well suited to this problem because the shooting map is finite-dimensional,
but its evaluation requires solving an ODE.  Our computation therefore has
two steps.

\begin{enumerate}
\item First, produce rigorous enclosures for the singular start and for
the numerical flow. Since the ODE is singular at $t=0$, Taylor integration needs to start from $t=\delta$ for some $\delta>0$. We cannot numerically evaluate the exact values of the $q_i(\delta)$ by a finite Taylor expansion, but we prove a bound on the Taylor remainder by a singular Gronwall estimate and then use interval arithmetic on the entire interval to obtain rigorous enclosures. 
\item Second, use these rigorous enclosures in a finite-dimensional interval Krawczyk test. Once we obtain the rigorous enclosures, we can apply the Krawczyk interval test on these intervals and show that the approximate solutions pass the Krawczyk inclusion test, which proves the existence of a genuine solution in a neighbouring box of each approximate solution. 
\end{enumerate}

Let $z_0\in \mathbb R^n$ be an approximate zero of the shooting map.  We
validate a zero in a small box
\begin{align*}
z=z_0+ \Lambda\xi,
\qquad
\xi\in X:=[-1,1]^n,
\end{align*}
where $\Lambda$ is a diagonal scaling matrix given by $\Lambda=\operatorname{diag}(r_1,\ldots,r_n)$.
The scaled shooting map is
\begin{align*}
\widehat{\mathcal S}(\xi)
=
\mathcal S(z_0+\Lambda\xi).
\end{align*}
Thus the validation box is always the fixed unit cube $X=[-1,1]^n$.

Let $[\,\widehat{\mathcal S}(0)\,]$ be an interval enclosure of the
shooting residual at the centre and let
$[\,D\widehat{\mathcal S}(X)\,]$ be an interval enclosure of the Jacobian
on the whole cube.  If $Y$ is a numerical approximate inverse of the
midpoint Jacobian, then we can define the Krawczyk operator as
\begin{align*}
K(X)
=
-Y[\,\widehat{\mathcal S}(0)\,]
+
\bigl(I-Y[\,D\widehat{\mathcal S}(X)\,]\bigr)X.
\end{align*} By Theorem \ref{thm:krawczyk}  if
\begin{align*}
K(X)
\subset
\operatorname{int}(X),
\end{align*}
then $\widehat{\mathcal{S}}$ has a zero in $X$.

The Taylor arithmetic gives
a polynomial approximation $P(\xi)$ to $\widehat{\mathcal S}(\xi)$ on the
whole cube.  The remaining analytic work is to control the difference
between the true shooting map and this polynomial approximation, which we do in the following section.

\subsection{Singular start: $0\leq t\leq \texorpdfstring{\delta}{delta}$}
At $t=0$ the regularized variables $y$ satisfy a regular-singular
system
\begin{equation}\label{eq:section5-fuchsian-system}
        t y'=G(t,y;a,c,\nu).
\end{equation}
For the $\U(1)$-invariant problem we impose $\nu=0$ and use the reduced
six-dimensional system. Let

$$ \alpha= \begin{cases} (a,c,\nu),&\text{for }\tau_4,\\ (a,c),&\text{for }\tau_{23}\text{ on }\nu=0. \end{cases} $$

The terminal time $T$ is not included in $\alpha$, because it does not affect the local solution at the singular orbit.

Let $\alpha_0$ be the centre of the parameter box and write

$$ \alpha(\xi)=\alpha_0+\Lambda\xi, \qquad \xi\in[-1,1]^k, $$

where $\Lambda$ is the diagonal matrix containing the parameter radii. Here $k=3,2$ for $\tau_4,\tau_{23}$ respectively.
In the regularized variables, the singular initial-value problem has the form

$$ t\frac{\partial y}{\partial t} = G(t,y;\alpha). $$

Theorem \ref{thm:complete-31} shows that, for every admissible $\alpha$, this equation has a unique real-analytic solution $y(t,\alpha)$.

We want rigorous enclosures not only for $y(\delta,\alpha)$, but also for its derivatives with respect to the parameters. These derivatives are needed to enclose the Jacobian of the fixed-locus map. The derivative matrix
$$ W(t,\xi) = D_\xi y(t,\alpha(\xi)) $$
is an $n\times k$ matrix, where $n=8$ for $\tau_4$ and $n=6$ for the $\U(1)$-invariant solution for $\tau_{23}$.
Differentiating $ t\,\partial_t y=G(t,y;\alpha(\xi)) $ with respect to $\xi$ gives

$$ t\frac{\partial W}{\partial t} = D_yG(t,y;\alpha)\,W + D_\alpha G(t,y;\alpha)\,\Lambda. $$
We denote the augmented system of equations on $Z\coloneqq (y,W)$ by 
\begin{align*}
    tZ'&=  \mathcal{G}(t,Z;\alpha),  
\end{align*} where  $\mathcal{G}(t,Z;\alpha):=(G(t,y,\alpha),D_yG(t,y;\alpha)\,W + D_\alpha G(t,y;\alpha)\,L)$. We define the norm on $\R^n\times \operatorname{Mat}_{n\times k}(\R)$ by $$ \|(y,W)\|_\infty = \max\left\{ \|y\|_\infty,\max_{i,j}|W_{ij}| \right\}.$$
By \eqref{init_eta_ac}, the solution $\eta_{a,c,\nu}(t)$ is determined uniquely by $\alpha$; thus, by recursive differentiation, we can construct a polynomial
$$ \overline Z_N(t) = \sum_{j=0}^{N}Z_jt^j $$
which satisfies the augmented equation up to order $N$. 
For the $\tau_4$ computation we take $N=24$. For the $\U(1)$-invariant $\tau_{23}$ computation we take $N=8$. Since the polynomial $\overline Z_N$ is not the exact solution, the $O(t^{N+1})$ error is given by
$$ \mathcal R_N(t) = t\overline Z_N'(t) - \mathcal G(t,\overline Z_N(t);\alpha). $$
We require a bound on $R_N(t)$ that is uniform both in $t$ and in the parameters. More precisely, we find a constant $C_R$ such that
$$ \|\mathcal R_N(t)\|_\infty \leq C_Rt^{N+1}, \qquad 0\leq t\leq\delta, \ \alpha\in\alpha(\xi).$$

The existence of such a constant follows from the analyticity of $\mathcal G$. The important point in the computer-assisted argument is that we compute an explicit rigorous value of $C_R$.
Let $Z(t)$ denote the exact augmented solution and put

$$ e(t)=Z(t)-\overline Z_N(t). $$
If we denote by
\begin{align*}
     A_e(t) = \int_0^1 D_Z\mathcal G \bigl(t,\overline Z_N(t)+\theta e(t);\alpha\bigr) \,d\theta,
\end{align*} we obtain
\begin{align}\label{eqn:error-eqn}
    te'&=A_e(t)e-\mathcal R_N(t).
\end{align}

In addition to the coefficients $Z_0,\ldots,Z_N$, we compute an
interval enclosure of the next coefficient $Z_{N+1}$. Let
$C_{N+1}$ be an upper bound for its infinity norm. Fix a trial radius $H>0$ and consider the tube
\[
\mathcal T_H
=
\left\{
(t,Z,\alpha):
0<t\leq\delta,\ 
\alpha\in[\alpha],\
\|Z-\overline Z_N(t;\alpha)\|_\infty
\leq Ht^{N+1}
\right\}.
\]
Since
\[
Z(t)-\overline Z_N(t)
=
Z_{N+1}t^{N+1}+O(t^{N+2}),
\]
the inequality $C_{N+1}<H$ implies that the exact solution lies strictly inside
$\mathcal T_H$ for all sufficiently small positive $t$. We shall prove below, by a first-exit argument, that it remains in $\mathcal T_H$ for every $0<t\leq\delta$.

If we let 
\begin{align*}
    L\coloneqq D_yG(0,y_0;\alpha_0) = D_yM_{-1}(0,y_0;\alpha_0),
\end{align*}
then for $ C_j u = D_y\!\left(D_yG(0,y;\alpha_0)w_{j,0} +D_\alpha G(0,y;\alpha_0)\Lambda e_j\right)_{y=y_0}u$, consider the augmented vector field
$$ D_Z\mathcal G(0,Z_0) \begin{pmatrix} u\\ v_1\\ \vdots\\ v_k \end{pmatrix} = \begin{pmatrix} Lu\\ C_1u+Lv_1\\ \vdots\\ C_ku+Lv_k \end{pmatrix}. $$
Consequently,
$$ A_0:=D_Z\mathcal G(0,Z_0) = \begin{pmatrix} L&0&0&\cdots&0\\ C_1&L&0&\cdots&0\\ C_2&0&L&\cdots&0\\ \vdots&\vdots&&\ddots&\vdots\\ C_k&0&0&\cdots&L \end{pmatrix}, $$ and $ \det(m \Id-A_0)= \det(m\Id-L)^{k+1}$.

Hence by \eqref{eqn:det_M}, all eigenvalues of $A_0-m\Id$ are negative for $m>1$. By the Lyapunov theorem there is a symmetric positive-definite matrix $P$ satisfying
\begin{align}\label{eqn:P_matrix}
    -\bigl((A_0-mI)^TP+P(A_0-mI)\bigr)>0 .
\end{align}
By continuity, the same strict inequality, with a possibly smaller positive lower bound, holds in a sufficiently small neighbourhood of the singular initial data; hence, the same $P$ works for all $A\in  D_Z\mathcal G(\mathcal T_H)$. For the particular tube $\mathcal T_H$, this is verified directly by interval arithmetic. The positivity of $P$ and of the matrix in
\eqref{eqn:P_matrix} is proved by interval $LDL^T$
decomposition of $Q=-\bigl((A-mI)^TP+P(A-mI)\bigr)$ and $P$.  For the full parameter boxes the verified lower bounds
for the two smallest diagonal entries in the $LDL^T$ decomposition are
\begin{center}
\begin{tabular}{c|cc}
 & $P$ & $Q$\\ \hline
$\tau_4$ & $0.07825$ & $0.85567$\\
$\tau_{23}$  & $0.07230$ & $0.99982$.
\end{tabular}
\end{center}

Let $U(t,s)$, for $0<s\le t\le\delta$, be the solution operator of the homogeneous part of the error equation \eqref{eqn:error-eqn}
\begin{equation}\label{eqn:error-homo}
       t v'(t)=A_e(t)v(t),
       \qquad U(s,s)=I.
\end{equation}
Then 
\begin{align*}
    e(t)=-\int_0^t U(t,s) \frac{R^{N}}{s} ds.
\end{align*}
To bound $\|e(t)\|_\infty$, we need an estimate of the form
\begin{align}\label{eqn:bound_U}
    \|U(t,s)\| \leq C_L\left(\frac ts\right)^m,
\end{align} for $1<m<N+1$.
We use the matrix $P$ from \eqref{eqn:P_matrix} to prove this bound. Define the $P$-norm by
$$ |v|_P^2=v^TPv. $$
Let $ w(t)=t^{-m}v(t)$. Then, from \eqref{eqn:error-homo}, we have $$ tw'=(A_e(t)-mI)w. $$
Therefore
$$ \frac{d}{dt}|w|_P^2 = \frac1t w^T\left( (A_e-mI)^TP+P(A_e-mI) \right)w. $$
The verified matrix inequality \eqref{eqn:P_matrix} implies
$$ \frac{d}{dt}|w|_P^2\leq0. $$
Hence, for $0<s\leq t$,
\begin{align*}
    |w(t)|_P\leq |w(s)|_P \implies |v(t)|_P \leq \left(\frac ts\right)^m|v(s)|_P .
\end{align*}
Converting the $P$-norm to the infinity norm gives \eqref{eqn:bound_U} where the code uses the rigorous bound
$$  C_L= \sqrt{ n_Z\, \|P\|_\infty\, \|P^{-1}\|_\infty }, $$
and $n_Z$ is the dimension of the augmented variable $Z$. The matrix $P^{-1}$ and all norms are enclosed by outward-rounded interval arithmetic.

We now close the tube estimate by a first-exit argument. On any
interval $(0,t_1]$ on which the exact solution remains in
$\mathcal T_H$, variation of constants gives, for $0<s<t\leq t_1$,
\[
e(t)
=
U(t,s)e(s)
-
\int_s^t
U(t,r)\frac{\mathcal R_N(r)}{r}\,dr.
\]
Since $e(s)=O(s^{N+1})$ as $s\to0$ and
\[
\|U(t,s)\|_\infty
\leq
C_L\left(\frac ts\right)^m,
\qquad m<N+1,
\]
we have
\[
\|U(t,s)e(s)\|_\infty
=
O\bigl(s^{N+1-m}\bigr)
\longrightarrow0
\qquad\text{as }s\to0.
\]
Consequently,
\[
e(t)
=
-\int_0^t
U(t,s)\frac{\mathcal R_N(s)}{s}\,ds.
\]
Using $\|\mathcal R_N(s)\|_\infty
\leq C_Rs^{N+1}$, we obtain
\begin{align*}
\|e(t)\|_\infty
&\leq
\int_0^t
C_L\left(\frac ts\right)^m
C_Rs^N\,ds  =
\frac{C_LC_R}{N+1-m}\,t^{N+1}.
\end{align*}
Set
\[
H_0:=\frac{C_LC_R}{N+1-m}.
\]
The interval calculation verifies the two strict inequalities
\begin{equation}\label{eq:tube-closing-inequalities}
C_{N+1}<H,
\qquad
H_0<H.
\end{equation}

Suppose, for contradiction, that the exact solution leaves
$\mathcal T_H$ before $t=\delta$, and let $t_*\leq\delta$ be its
first exit time. The solution remains in $\mathcal T_H$ on
$(0,t_*]$, so the preceding estimate applies at $t=t_*$. Hence
\[
\|e(t_*)\|_\infty
\leq
H_0t_*^{N+1}
<
Ht_*^{N+1},
\]
contradicting the definition of $t_*$, according to which
\[
\|e(t_*)\|_\infty=Ht_*^{N+1}.
\]
Therefore no first exit occurs, and
\[
\|Z(t)-\overline Z_N(t)\|_\infty
\leq
H_0t^{N+1},
\qquad
0<t\leq\delta.
\]
Thus the exact solution remains in $\mathcal T_H$ throughout the
singular-start interval, and its error has the same order
$t^{N+1}$ as the Taylor defect.

\paragraph{Numerical error bounds.}
For the full $\tau_4$ parameter box, we use
\[
N=24,\qquad m=4,\qquad \delta=\frac18.
\]
The interval computation gives
\[
R
=
3.0981467332626974\times 10^7,
\qquad
C_L
=
11.4718403931572.
\]
Consequently,
\[
H_0
=
\frac{C_LC_R}{N+1-m}
=
\frac{C_LC_R}{21}
=
1.6924497542176703\times 10^7.
\]
The larger radius used to verify the Lyapunov inequality on the
whole tube is
\[
H=3.384899508435341\times 10^7.
\]
Although these constants are large, they multiply by $t^{25}$, thus, at
$t=\delta=1/8$, the actual error satisfies
\[
\left\|Z(\delta)-\overline Z_{24}(\delta)\right\|_\infty
\leq
H_0\delta^{25}
=
H_0\left(\frac18\right)^{25}
<
4.49\times 10^{-16}.
\]

For the full $\U(1)$-invariant $\tau_{23}$ parameter box, we use
\[
N=8,\qquad m=4,\qquad \delta=\frac1{1024}.
\]
The interval computation gives
\[
R
=
1.3555331515188007,
\qquad
C_L
=
8.73364532077527.
\]
It follows that
\[
H_0
=
\frac{C_LC_R}{N+1-m}
=
\frac{C_LC_R}{5}
=
2.3677491531835867.
\]
The trial radius used in the tube verification is
\[
H=4.735498306367173.
\]
The computed enclosure of $Z_{N+1}$ also satisfies
$C_{N+1}<H$, while the displayed values give $H_0<H$.
Thus \eqref{eq:tube-closing-inequalities} holds on the full parameter box.
At $t=\delta=1/1024$, the corresponding error satisfies
\[
\left\|Z(\delta)-\overline Z_8(\delta)\right\|_\infty
\leq
H_0\delta^9
=
H_0\left(\frac1{1024}\right)^9
<
1.92\times 10^{-27}.
\]
All displayed bounds are obtained with outward-rounded interval
arithmetic and hold uniformly over the corresponding full parameter
boxes.
\subsection{Regular flow: $\delta\leq t\leq T$}
The Taylor argument supplies a rigorous enclosure at the positive time $t=\delta$. From there onward, the ODE is regular.
At the end of the singular-start calculation we have $q(\delta;\beta)\in[q_\delta] $ and an enclosure of $ D_\beta q(\delta;\beta), $
uniformly over the parameter box. Here $\beta$ denotes the parameters other than the reflection time $T$.

Since $T$ also varies over an interval, it is convenient to replace $t\in[\delta,T]$ by
$$ t=\delta+s(T-\delta), \qquad 0\le s\le1. $$
The rescaled system \eqref{eqn:system_F} becomes
\begin{equation}\label{eq:section5-rescaled-flow}
 \frac{dq}{ds}=(T-\delta)F(q),
 \qquad
 \frac{dT}{ds}=0.
\end{equation}
Let $\Phi_s(q_\delta,T)$ denote the flow of this rescaled system. Then
$$ \Phi_1(q_\delta,T) = \bigl(q(T),T\bigr). $$
CAPD propagates the interval initial set $ [q_\delta]\times[T] $ from $s=0$ to $s=1$. It rigorously encloses:
\begin{enumerate}
    \item the solution $q(s)$ on every integration step,
    \item the endpoint $q(T)$,
    \item the derivative of the endpoint map with respect to both $q_\delta$ and $T$.
\end{enumerate}
More precisely, the validated $C^1$ solver produces an interval enclosure $[M]$ for every $(q_\delta,T)\in [q_\delta]\times[T]$ such that
$$ D_{(q_\delta,T)}\Phi_1(q_\delta,T)\subset [M] .  $$
The derivative with respect to the original scaled parameters is then obtained by the chain rule. CAPD also verifies on every integration step that the constraints 
$$ q_2+q_7>0,\qquad q_3+q_6>0,\qquad -(q_4+q_5)>0, $$ are satisfied throughout the interval. The output of this section is therefore a rigorous enclosure over the complete parameter box, of
$$ q(T;\xi) \qquad\text{and}\qquad D_\xi q(T;\xi). $$

For $\tau_4$ the parameter centre and radii are
\begin{align*}
 z_4={}&(0.042329982765393195,-0.06469940434457604,
       0.006759263945977987,0.7656914313714904),\\
 r_4={}&(10^{-6},10^{-6},10^{-6},10^{-6}),
 \qquad (\delta,N)=(1/8,24).
\end{align*}
For $\tau_{23}$ on $\nu=0$ they are
\begin{align*}
 z_{23}^{\U(1)}={}&(0.025880416406250467,-0.07764124921873236,0.8578017401911906),\\
 r_{23}^{\U(1)}={}&(10^{-7},3\cdot10^{-7},5\cdot10^{-7}),
 \qquad (\delta,N)=(1/1024,8).
\end{align*}
Here $N$ is the degree of the singular Taylor polynomial.  The regular
flow uses order $24$ in both cases.

\subsection{Compact inhomogeneous solutions}

We now use the endpoint enclosures obtained from the validated regular
flow to prove that the fixed-locus maps have zeros in the stated
parameter boxes.

Recall that for $\tau_4$, the fixed-locus map is
\[
\mathcal S_4(a,c,\nu,T)
=
\begin{pmatrix}
q_1(T)-q_8(T)\\
q_2(T)+q_4(T)\\
q_3(T)-q_6(T)\\
q_5(T)+q_7(T)
\end{pmatrix}.
\]
For the $\U(1)$-invariant $\tau_{23}$ problem, where
$q_3=q_2$ and $q_7=q_6$, we use
\[
\mathcal S_{23}(a,c,T)
=
\begin{pmatrix}
q_1(T)-q_8(T)\\
q_2(T)-q_6(T)\\
q_4(T)-q_5(T)
\end{pmatrix}.
\]
Thus $\mathcal S_\tau=0$ is precisely the condition that the endpoint belongs to the corresponding reflection fixed locus.

For each of the two fixed-locus maps, CAPD provides an interval
enclosure $[f_0]$ of
$\widehat{\mathcal S}_\tau(0)$ and an interval matrix
$[J_\tau]$ containing
\[
D\widehat{\mathcal S}_\tau(\xi)
\qquad
\text{for every }\xi\in X_\tau:=[-1,1]^{k}.
\]
The invertible matrix $A_\tau$ in Theorem \ref{thm:krawczyk} is a fixed matrix obtained from a
numerical approximation to the inverse of the midpoint of
$[J_\tau]$.  All subsequent operations involving $A_\tau$ are performed with outward-rounded interval arithmetic.

The verified bounds are
\[
\begin{array}{c|cc}
&
\|[ f_0]\|_\infty
&
\|I-A_\tau [J_\tau]\|_\infty

\\ \hline
\tau_4
&
<1.8\cdot10^{-14}
&
<0.003706

\\
\tau_{23}
&
<1.5\cdot10^{-14}
&
<0.79901

\end{array}
\]

More explicitly, for the $\tau_4$ problem the computed Krawczyk image
satisfies
\[
\begin{split}
K_4\subset{}&
[-5.9\cdot10^{-4},5.9\cdot10^{-4}]
\times[-5.2\cdot10^{-4},5.2\cdot10^{-4}]\\
&\times[-4.5\cdot10^{-4},4.5\cdot10^{-4}]
\times[-3.8\cdot10^{-3},3.8\cdot10^{-3}].
\end{split}
\]
In particular,
\begin{align}\label{eqn:S4_inclusion}
    K_4\subset\operatorname{int}[-1,1]^4.
\end{align}

For the $\U(1)$-invariant $\tau_{23}$ problem,
\[
K_{23}
\subset
[-0.322,0.322]
\times[-0.322,0.322]
\times[-0.800,0.800],
\]
and hence
\begin{align}\label{eqn:S23_inclusion}
    K_{23}\subset\operatorname{int}[-1,1]^3.
\end{align}

The hypotheses of Theorem \ref{thm:krawczyk} are therefore satisfied in
both cases.  It follows that $\widehat{\mathcal S}_4$ has a unique zero
in $[-1,1]^4$ and that $\widehat{\mathcal S}_{23}$ has a unique zero
in $[-1,1]^3$.  Equivalently, $\mathcal S_4$ and
$\mathcal S_{23}$ each have a unique zero in their respective certified parameter boxes.

\subsubsection{The inhomogeneous solution on $B^7$}
By \eqref{eqn:S4_inclusion}, the map $\mathcal S_4$ has a unique zero in
\begin{equation}\label{eq:section5-tau4-box}
        z_4+10^{-6}[-1,1]^4.
\end{equation}
By the reflection principle, the corresponding trajectory reflects
smoothly across $\operatorname{Fix}(\tau_4)$.  The singular orbit at
$t=2T$ is the $\tau_4$-image of the singular orbit at $t=0$, and the
resulting compact manifold is $SO(5)/SO(3)$.

It remains to show that the new complete solution thus obtained is not isometric to the known homogeneous Berger solution. We do so by comparing the interval lengths for the two metrics. In our normalisation $|dt|^2=1$, so $t$ is the arclength parameter. 

\begin{prop}\label{prop:not-homogeneous-berger}
The new metric obtained by the $\tau_4$ reflection is not
isometric, even up to homothety, to the standard homogeneous
Berger metric.
\end{prop}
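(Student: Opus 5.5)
The plan is to compare a scale-invariant quantity: the diameter of the orbit space measured against a fixed normalisation, for instance the nearly parallel constant $\lambda$. Both metrics are nearly parallel and hence Einstein. Under the homothety of Remark~\ref{inv_ca}, $t\mapsto st$ and $\lambda\mapsto\lambda/s$, so the product $\lambda\cdot L$ is invariant, where $L$ is the arclength distance between the two singular orbits. An isometry up to homothety must respect this normalisation.

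The first step is to justify that $L$ is an isometry invariant. I would argue this as follows.
\begin{itemize}
\item The homogeneous Berger metric has isometry group $\SO(5)$, and the cohomogeneity-one $\SO(4)\subset\SO(5)$ action on it is the one in the group diagram of Section~\ref{section:coho-1action}.
\item Suppose $f$ is a homothety from the new metric to the Berger metric. Then $f$ conjugates the $\SO(4)$ action on the new metric to a subgroup $f\SO(4)f^{-1}\subset\SO(5)$.
\item This subgroup is isomorphic to $\SO(4)$ and acts with cohomogeneity one. Its maximal subgroups of rank two are conjugate to the standard $\SO(4)$, so the subgroup is conjugate to the standard one.
\item Hence $f$ can be composed with an element of $\SO(5)$ to be $\SO(4)$-equivariant. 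It then maps singular orbits to singular orbits and preserves the distance between them, up to the scale factor.
\end{itemize}
One subtlety remains. Since the structures are proper and the Killing spinor is unique up to sign, the homothety preserves $\varphi$ up to sign and scale. It therefore rescales $\lambda$ consistently with $t$, so $\lambda L$ is genuinely preserved.

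The second step is to compute $\lambda L$ in both cases.
\begin{itemize}
\item For the homogeneous Berger metric, with $\lambda=6/\sqrt5$, the singular orbits sit at $t=0$ and $t=\pi/3$. So $L_B=\pi/3\approx1.0472$, and $\lambda L_B=2\pi/\sqrt5\approx 2.810$.
\item For the new solution, with the same $\lambda$, the half-length is the certified reflection time $T$. By \eqref{eq:section5-tau4-box}, $T\in 0.7656914\ldots+10^{-6}[-1,1]$, so $L=2T\in[1.53138,1.53139]$.
\end{itemize}
This interval does not contain $\pi/3$, so $\lambda L\neq\lambda L_B$.

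Two checks tie the computation to the setup. The $\lambda$ used in the shooting computation must be the same $6/\sqrt5$; the shooting code fixes it. The normalisation $|dt|^2=1$ must hold in \eqref{eq:gen-phi}, so that $t$ really is arclength.

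I expect the main obstacle to be the group-theoretic step: showing that any homothety can be made $\SO(4)$-equivariant, or equivalently that the orbit-space length is an intrinsic invariant. If that proves awkward, the fallback is a genuinely intrinsic invariant that needs no equivariance, such as the volume normalised by $\lambda^{-7}$. This is computable from $\int_0^{2T}|p_1p_2p_3|\,dt$ times the fixed orbit volume, and it can be enclosed by the same CAPD run and compared with the homogeneous value. A second fallback is to use that the isometry group of the new metric is strictly smaller than $\SO(5)$: the metric is of cohomogeneity one, and a homogeneous metric would force constant orbit data along $t$.
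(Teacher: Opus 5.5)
Your proposal is correct and is essentially the paper's argument. Both reduce a putative homothety to an $\SO(4)$-equivariant isometry, using that every $\SO(4)\subset\SO(5)=\mathrm{Isom}_0(B,g_B)$ is conjugate to the standard one, and then compare the orbit-space lengths $2T\approx 1.53138$ and $\pi/3$. The one difference is that the paper removes the scale freedom via the common Einstein constant $\mathrm{Ric}=\tfrac{27}{10}g$ (equivalently $\mathrm{Ric}=\tfrac38\lambda^2 g$), so your Killing-spinor justification that $\lambda L$ is invariant is valid but not needed.
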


\begin{proof}
Let $g$ denote the new metric obtained, and let $g_B$
be the homogeneous Berger metric, both normalised by
$\lambda=6/\sqrt5$. Then
\[
    \operatorname{Ric}(g)=\frac{27}{10}g,\ \ \operatorname{Ric}(g_B)=\frac{27}{10}g_B.
\]
Consequently, any homothety between these metrics must be an isometry.

Suppose that such an isometry
$f\colon(B,g)\to(B,g_B)$ exists. It conjugates the given
$G=\SO(4)$ action to a subgroup of
$\operatorname{Isom}_0(B,g_B)=\SO(5)$.
Every subgroup of $\SO(5)$ isomorphic to $\SO(4)$ is conjugate
to the standard subgroup fixing a vector in $\R^5$; see
\cite[Sections~2.2 and~2.6]{Ball-Madnick}.
After composing $f$ with an isometry of $g_B$, we may therefore
assume that $f$ maps the $G$-orbits to the standard
$\SO(4)$-orbits. In particular, it induces an isometry of the
metric orbit spaces
\[
    (B,g)/G \longrightarrow (B,g_B)/\SO(4).
\]

These orbit spaces are closed intervals, whose lengths we now
compare. For the reflected solution, $t$ is arclength along a
normal geodesic and the two singular orbits occur at $t=0$
and $t=2T$. Hence $L(g)=2T$. The certified parameter enclosure gives
\[
    \left|T-0.7656914313714904\right|\leq 10^{-6},
\]
and therefore
\[
    L(g)\in[1.531380,\,1.531386].
\]
In the same normalisation $\lambda=6/\sqrt{5}$ from \eqref{eq:hom_ng2}, $$L(g_B)=\frac{\pi}{3}.$$

Since $\pi/3\notin[1.531380,\,1.531386]$, the two metric orbit
spaces cannot be isometric. This contradicts the existence
of $f$.
\end{proof}

We can now state the main result of the section. 

\begin{thm}\label{thm:certified-berger}
There exists a smooth inhomogeneous complete nearly parallel $G_2$-structure on
$SO(5)/SO(3)$, invariant under the cohomogeneity-one $SO(4)$-action.

\end{thm}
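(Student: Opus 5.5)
The plan is to assemble Theorem \ref{thm:certified-berger} from four ingredients already in place: local existence at a singular orbit, the certified zero of the shooting map, the reflection principle, and the non-homogeneity argument.

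First, local smoothness. By Theorem \ref{thm:complete-31}, for every $(a,c,\nu)$ with $a>0$, $c<0$ the solution $\eta_{a,c,\nu}$ of \eqref{eqns:gen_ng2} extends smoothly across the singular orbit with slope $(3,-1)$, that is, with isotropy $K^i_{3,-1}$ at $t=0$. The induced metric on the singular orbit is positive definite precisely on this branch, and the certified box $z_4+10^{-6}[-1,1]^4$ satisfies $a>0$, $c<0$.

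Second, the certified zero. The Krawczyk inclusion \eqref{eqn:S4_inclusion} and Theorem \ref{thm:krawczyk} give a unique zero $(a_*,c_*,\nu_*,T_*)$ of $\mathcal S_4$ in this box. To make this rigorous I need two facts.
\begin{enumerate}
\item The singular-start tube estimate gives a true enclosure of $\eta_{a,c,\nu}(\delta)$ and of its parameter derivatives. This is the first-exit argument, using the verified Lyapunov matrix $P$ and the inequalities \eqref{eq:tube-closing-inequalities}.
\item The CAPD $C^1$ enclosure on $[\delta,T]$ verifies the chamber inequalities \eqref{eq:riemannian-chamber} at every step.
\end{enumerate}
Together these show that the trajectory stays in $\mathcal U$ on $(0,T_*]$. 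Hence $\varphi$ is a genuine positive $3$-form with $p_1p_2p_3\ne0$ there, and hypothesis (i) of Lemma \ref{lemma:timereversing} holds.

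Third, reflection and global smoothness. Since $\eta(T_*)\in\operatorname{Fix}(\tau_4)$, Lemma \ref{lemma:timereversing} produces a solution $\Psi$ on $[0,2T_*]$. It is real-analytic across $t=T_*$ by uniqueness for the regular ODE. It closes smoothly at $t=2T_*$ because it is the $\tau_4$-image of the smooth cap at $t=0$, and $\tau_4$ maps $K^i_{3,-1}$ to $K^j_{1,-3}$. The resulting group diagram is $H_B\subset\{C^i_{3,-1}H_B,C^j_{1,-3}H_B\}$, which is the Berger diagram of Section \ref{section:coho-1action}. The compact manifold is therefore $SO(5)/SO(3)$.

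On this manifold the $\SO(4)$-invariant $3$-form satisfies $d\varphi=\lambda\psi$ on the regular part, and by continuity everywhere; this forces $d\psi=0$. Completeness follows from compactness. At this step I must check that $\tau_4$ acts geometrically, via a conjugation in $\SO(4)$ combined with $t\mapsto -t$, so that the reflected form is the pullback of an honest $\G2$-structure. This is the content of the remark after Proposition \ref{prop:discrete_symm}.

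Fourth, inhomogeneity, which follows from Proposition \ref{prop:not-homogeneous-berger}. Normalise $\lambda=6/\sqrt5$ so that both metrics have $\operatorname{Ric}=\frac{27}{10}g$. Any isometry would conjugate the $\SO(4)$-action into the standard $\SO(4)\subset\SO(5)$ and induce an isometry of orbit spaces. But the certified value gives $2T_*\in[1.531380,1.531386]$, which does not contain $\pi/3$.

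One gap in that argument must also be closed. The metric could in principle be homogeneous under some other group without being isometric to $g_B$. This is ruled out by the classification of \cite{friedkath}: the only homogeneous nearly parallel structure on $SO(5)/SO(3)$ is the Berger one.

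The main obstacle is the rigour of the computer-assisted step, particularly the singular start. Validated integration can only begin at $t=\delta$, and one needs a remainder bound that is uniform over the whole parameter box, including the extra datum $\nu$ coming from the resonant eigenvalue $n=1$ in \eqref{eqn:det_M}. I would handle this with the weighted Lyapunov estimate at exponent $m=4>1$, which lies strictly above the resonance. The Krawczyk check itself is routine once these enclosures exist.
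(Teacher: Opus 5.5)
Your proposal is correct and is essentially the paper's argument: the Krawczyk-certified zero of $\mathcal S_4$ in $z_4+10^{-6}[-1,1]^4$ (resting on the singular-start tube estimate and the CAPD enclosure), the reflection principle with $\tau_4$ producing the Berger diagram, and Proposition~\ref{prop:not-homogeneous-berger} comparing $L(g)=2T\approx1.5314$ with $L(g_B)=\pi/3$. Your appeal to the classification in \cite{friedkath}, to pass from non-isometry with $g_B$ to genuine inhomogeneity, makes explicit a step the paper leaves implicit; the only imprecision is calling $\tau_4$ a conjugation in $\SO(4)$, because $q_1\leftrightarrow q_8$ shows that $\tau_4$ exchanges the two $\Sp(1)$ factors, so it comes from an outer automorphism (conjugation by an element of $\mathrm{O}(4)\setminus\SO(4)$), though this does not affect the argument.
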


\subsubsection{The $\U(1)$-invariant solution on $M_{23}$}

Let
\[
 z_{23}^{\U(1)}=
 (0.025880416406250467,-0.07764124921873236,
 0.8578017401911906)
\]
and
\[
 D_{23}=\operatorname{diag}(10^{-7},3\cdot10^{-7},5\cdot10^{-7}).
\]
By \eqref{eqn:S23_inclusion}, the reduced map
$\mathcal S_{23}^{\U(1)}$ has a unique zero in
\begin{equation}\label{eq:section5-tau23-u1-box}
 z_{23}^{\U(1)}+D_{23}[-1,1]^3.
\end{equation}
Note that this is a uniqueness statement within the $\U(1)$-invariant subfamily. No uniqueness among all non-invariant nearby solutions is asserted.

The identities \eqref{eq:section5-u1-identities} hold along the whole
trajectory, and the three matching equations imply all four equations
in \eqref{eq:section5-fix-tau23}.  The reflection principle therefore
gives a smooth complete solution on $M_{23}$.

\begin{prop}\label{prop:certified-tau23}
There exists a smooth complete nearly parallel $\G2$-structure on
$M_{23}$, invariant under the cohomogeneity-one $\SO(4)$-action and
under the additional $\U(1)$-action.  Its shooting parameters satisfy
\begin{align*}
 a&\in 0.025880416406250467+10^{-7}[-1,1],\\
 c&\in -0.07764124921873236+3\cdot10^{-7}[-1,1],\\
 \nu&=0,\\
 T&\in 0.8578017401911906+5\cdot10^{-7}[-1,1].
\end{align*}
\end{prop}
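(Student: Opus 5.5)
The plan is to assemble the pieces already established for the $\U(1)$-invariant subfamily. First invoke Theorem~\ref{thm:complete-31} with $\nu=0$. This gives, for every $(a,c)$ with $a>0$, $c<0$, a real-analytic solution $\eta_{a,c,0}$ of \eqref{eqns:gen_ng2} which extends smoothly over the singular orbit of slope $(3,-1)$ at $t=0$. It satisfies $q_2=q_3$ and $q_6=q_7$ on its whole interval of existence, so it is invariant under the additional circle action.

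The next step is to pass to the reduced shooting map $\mathcal S_{23}^{\U(1)}$. I would check the algebraic claim that a zero of this map gives a point of $\operatorname{Fix}(\tau_{23})$. The identities \eqref{eq:section5-u1-identities} turn $q_3=q_6$ into $q_2=q_6$, which is the second component. They also turn $q_2=q_7$ into $q_2=q_6$ again, so the four equations in \eqref{eq:section5-fix-tau23} collapse to three.

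Existence of the zero comes from the computer-assisted argument.
\begin{enumerate}
\item The singular-start tube estimate, with $N=8$, $m=4$, $\delta=1/1024$, encloses $Z(\delta)$ uniformly over the box.
\item The validated CAPD $C^1$ flow on $[\delta,T]$ encloses $q(T)$ and $D_\xi q(T)$. It also certifies that the chamber inequalities \eqref{eq:riemannian-chamber} hold throughout, so the trajectory stays in the domain $\mathcal U$ and is non-degenerate on $(0,T]$.
\item The verified inclusion \eqref{eqn:S23_inclusion}, fed into Theorem~\ref{thm:krawczyk}, gives a unique zero in the box \eqref{eq:section5-tau23-u1-box}.
\end{enumerate}
This yields the stated enclosures for $a$, $c$ and $T$, with $\nu=0$.

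Then apply the reflection principle, Lemma~\ref{lemma:timereversing}, with $\tau=\tau_{23}=\tau_2\circ\tau_3$. This is time-reversing, since $\tau_2$ is time-reversing and $\tau_3$ preserves time. It produces a solution on $[0,2T]$ which is smooth at $t=T$ and closes at $t=2T$ on the $\tau_{23}$-image of the initial singular orbit, namely $K^i_{1,-3}$. The group diagram $\mathbb Z_2^2\subset\{K^i_{3,-1},K^i_{1,-3}\}\subset\SO(4)$ is exactly the diagram of $M_{23}$ from Section~\ref{section:coho-1action}, so the compact manifold is $M_{23}$. Completeness follows because the manifold is compact.

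Finally, I must check that the additional $\U(1)$-invariance survives the reflection. The circle fixes the locus $q_2=q_3$, $q_6=q_7$. The map $\tau_3$ sends $q_i\mapsto q_{9-i}$, so it carries $(q_2,q_3,q_6,q_7)$ to $(q_7,q_6,q_3,q_2)$ and therefore preserves this locus, and $\tau_2$ does not touch the $q_i$. Hence the reflected half is also circle-invariant. I also need $p_2+p_3=0$, or the analogue given by the circle fixed set in the chosen chamber, and this follows from \eqref{eq:p-from-q} together with the identities on the $q_i$.

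I expect the main obstacle to be confirming that the reflected structure really is smooth and $\SO(4)\times\U(1)$-invariant at the second singular orbit, rather than merely locally defined. Concretely, this means the circle identified at $t=0$ corresponds under $\tau_{23}$ to a circle commuting with the action near $K^i_{1,-3}$. I would handle this by noting that $\tau_{23}$ conjugates the circle action to itself up to orientation, which is checked directly on the coframe $e^3+ie^5$, $e^4+ie^6$.
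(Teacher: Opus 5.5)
Your proposal is correct and follows the paper's argument: the Krawczyk inclusion \eqref{eqn:S23_inclusion} certifies a unique zero of $\mathcal S_{23}^{\U(1)}$. The identities $q_2=q_3$, $q_6=q_7$ then upgrade its three equations to all four equations of $\operatorname{Fix}(\tau_{23})$, and Lemma~\ref{lemma:timereversing} with $\tau_{23}$ closes the solution on $M_{23}$.

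One small slip: for the circle relevant at the $(3,-1)$ orbit, the fixed locus requires $p_2=p_3$, not $p_2+p_3=0$. The condition $p_2=p_3$ follows directly from \eqref{eq:constraint-set} once $q_2+q_7=q_3+q_6$. The condition $p_2+p_3=0$ belongs to the $(1,-1)$ circle, and it is impossible under the sign convention \eqref{eq:p-from-q}, where $p_2,p_3>0$.
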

We compare the certified solution with the explicit Sasaki--Einstein
metric \(A^{1,3,2}\) in \cite{TomasielloZaffaroni2011} with \eqref{eqn:phi_t0} for $m=3,n=-1$. 
After renormalising the Sasaki--Einstein metric in \cite[(4.16)--(4.18)]{TomasielloZaffaroni2011}
so that nearly parallel constant $\lambda=\frac6{\sqrt5}$, we obtain 
\[
\begin{aligned}
\varphi_A\big|_{\Sigma_-}
={}&-\frac5{18}v^{12}u^1
+\frac{5\sqrt3}{36}
 \left(v^1u^{23}+v^1u^{45}+v^2u^{25}+v^2u^{34}\right)-\frac{5\sqrt5}{216}u^{124}
+\frac{5\sqrt5}{72}u^{135}.
\end{aligned}
\]
This directly implies that $c=-3a$, and
\[
 a_A =\frac{5\sqrt5}{432},
 \qquad
 c_A=-3a_A=-\frac{5\sqrt5}{144}, \qquad \nu_A=0.
\]
Since \(t\) is arclength, the corresponding shooting time is the the length of radial part of the Sasaki–Einstein metric
\[
\begin{split}
 T_A&=\frac{\sqrt5}{3}
   \int_0^{\pi/2}
   \sqrt{\frac{4-\cos^2\theta}{7-\cos^2\theta}}\,d\theta,
\end{split}
\]
A direct interval evaluation of this one-dimensional integral gives
\[
T_A\in(0.8578016920,\,0.8578017884).
\]
Consequently,
\[
 (a_A,c_A,T_A)
 \in
 z_{23}^{\U(1)}+D_{23}(-1,1)^3.
\]
Since the Krawczyk argument shows that
\(\mathcal S_{23}^{\U(1)}\) has a unique zero in
\eqref{eq:section5-tau23-u1-box}.  Therefore the certified zero is
exactly \((a_A,c_A,T_A)\), and the lifted metric is the explicit
Sasaki--Einstein metric \(A^{1,3,2}\).

\paragraph{Computer codes to reproduce the numerical results.} The singular Taylor coefficients and their parameter derivatives are computed in Julia with $256$-bit outward-rounded interval arithmetic.
The regular flow and its first derivative are enclosed by the multiprecision $C^1$ interval solver (CAPD).  The final interval matrix products and Krawczyk inclusions are again carried out in Julia.  

The scripts \texttt{approximate\_S4.jl} and
{\texttt{approximate\_S23.jl}} locate the approximate zeros of the two shooting maps, while {\texttt{certify\_tau4\_capd.jl}} and {\texttt{certify\_tau23\_u1\_capd.jl}} rigorously certify them.  The supporting files {\texttt{SingularStart.jl}}, \texttt{CapdFlow.jl}, \texttt{G2CapdCertificates.jl}, and \texttt{IntervalKrawczyk.jl}
validate the regular-singular Taylor start, the CAPD enclosure of the
solution and its variational equation, preservation of the
$\G2$-chamber, and the strict Krawczyk inclusion.  Finally, \texttt{approximate\_S7\_gluing.jl}, together with
{\texttt{ApproximateS7Gluing.jl}}, performs a non-rigorous search for two-ended solutions on $S^7$. 

The source code accompanying this paper, including the programs used for the rigorous certificates and the numerical experiments, is available at
\url{https://github.com/raginisinghalmath/G2ReflectionCertificates}.

\medskip

Nearly parallel $\G2$-structures are classified \cite{Bar_realkilling,friedkath} as proper, Sasaki-Einstein, and 3-Sasakian relative to the dimension of their space of Killing spinors $\mathcal K$.  If the simply connected complete
manifold $\widetilde M$ is not round $S^7$, then\[
 \dim\mathcal K\in\{1,2,3\}.
\]
The round case does not occur here.  Indeed, $H^4(B^7;\mathbb Z)\cong\mathbb Z_{10}$, whereas the universal cover of $M_{23}$ is an $\mathbb RP^3$-bundle over $S^2\times S^2$ and has
$b_2=2$.  Neither universal cover is therefore diffeomorphic to
$S^7$. 

If $\Psi_1$, $\Psi_2$ are independent unit Killing spinors on a nearly $\G2$-manifold $M$, then there exists a nowhere-vanishing unit vector field $V$ such that $\Psi_2=V\cdot\Psi_1$, where $\cdot$ denotes Clifford multiplication. Since the structures in Theorem \ref{thm:certified-berger} and Proposition \ref{prop:certified-tau23} are invariant under the cohomogeneity-one action, the vector field $V$ must also be invariant under the same action. However, since the singular isotropy group does not fix any vector field, any smooth invariant vector field must vanish at the singular set, contradicting the fact that $V$ is nowhere vanishing. Hence no such $V$ can exist and $\dim\mathcal{K}=1$ for the nearly parallel $\G2$-structures constructed on $B^7$ and $M_{23}$. Moreover, since $B^7$ is simply connected, we have the following result. 

\begin{prop}\label{prop:proper-solutions}
The nearly parallel $\mathrm G_2$-structure constructed in
Theorem \ref{thm:certified-berger} is proper. Equivalently, the metric cone
associated with the structure has full holonomy
$\operatorname{Spin}(7)$.
\end{prop}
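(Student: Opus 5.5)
The plan is to use the classification of nearly parallel $\G2$-structures by the dimension of the space $\mathcal K$ of real Killing spinors, applied to the simply connected manifold $B^7=\SO(5)/\SO(3)$. By \cite{Bar_realkilling,friedkath}, a complete simply connected nearly parallel $\G2$-manifold falls into one of four classes. It is the round sphere, with $\dim\mathcal K=8$. Or it is proper, 3-Sasakian or Sasaki--Einstein, according as $\dim\mathcal K=1,3,2$. In these three cases the cone holonomy is respectively $\Spin(7)$, $\Sp(2)$ or $\SU(4)$. Hence it suffices to show two things: $B^7$ is not round, and $\dim\mathcal K=1$ for the structure of Theorem~\ref{thm:certified-berger}. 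Since $B^7$ is simply connected, no passage to the universal cover is needed. The equivalence with full holonomy $\Spin(7)$ of the cone is then exactly B\"ar's correspondence between Killing spinors on $M$ and parallel spinors on $C(M)$.

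The round case is excluded topologically: $H^4(B^7;\Z)\cong\Z_{10}\neq0$, so $B^7$ is not diffeomorphic to $S^7$.

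For $\dim\mathcal K=1$, I would argue by contradiction. Suppose $\dim\mathcal K\ge2$, and let $\Psi_1$ be the Killing spinor defining $\varphi$. Choose a second unit Killing spinor $\Psi_2\perp\Psi_1$. Since Killing spinors have constant length and the spinor bundle of a $\G2$-manifold satisfies $\mathbb S=\R\Psi_1\oplus TM\cdot\Psi_1$, we can write $\Psi_2=V\cdot\Psi_1$ with $V$ a nowhere-vanishing unit vector field. The key step is \emph{invariance}. The connected group $\SO(4)$ acts by isometries preserving $\varphi$, hence it preserves $\Psi_1$ and the finite-dimensional space $\mathcal K$. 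The orthogonal complement $\Psi_1^\perp\cap\mathcal K$ has dimension $1$ or $2$ in the Sasaki--Einstein or 3-Sasakian cases.

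I expect this to be the main obstacle, because $\SO(4)$ need not act trivially on this complement. It acts on it through a homomorphism $\SO(4)\to\SO(1)$ or $\SO(2)$. Both targets are abelian, while $\SO(4)$ is semisimple, so the action is trivial. Therefore $\Psi_2$ is invariant, and so is $V$.

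Finally, evaluate $V$ at a point of the singular orbit $\Sigma_-$ with isotropy $K^i_{3,-1}$. The vector $V$ must be fixed by the isotropy representation of $K^0\cong S^1$ on $T_qM=V\oplus\n$. From the weights computed in Section~\ref{sec:smoothness}, $K^0$ acts with weight $2$ on the normal slice and with weights $m,n$ on $u_2,\dots,u_5$. The only possible fixed direction is $u_1$, and the $\Z_2$ component sends $u_1\mapsto a u_1$ with $a=-1$, so no nonzero fixed vector remains. Hence $V(q)=0$, contradicting $|V|=1$. This gives $\dim\mathcal K=1$, so the structure is proper and the cone has holonomy $\Spin(7)$.
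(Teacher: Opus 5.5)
Your proof is correct and follows the same route as the paper: use the Killing-spinor classification, exclude the round case via $H^4(B^7;\Z)\cong\Z_{10}$, write a second Killing spinor as $V\cdot\Psi_1$, and note that an invariant vector field must vanish on a singular orbit whose isotropy fixes no nonzero tangent vector. Your semisimplicity argument, that $\SO(4)$ must act trivially on $\Psi_1^\perp\cap\mathcal K$, supplies a justification for the invariance of $V$, which the paper asserts without proof.
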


\begin{rem}[The $S^7$ matching problem]
    Using the families of solutions $\xi_{a,b,c}$ and $\eta_{a,c,\nu}$ extending to the singular orbits $K^i_{1,-1}$ and $K^j_{1,-3}$, respectively, one can obtain a complete solution on $S^7$ by solving a matching problem. If there exist parameters $(a_1,b_1,c_1)$ such that $c_1>a_1$, $a_1c_1-b_1^2>0$, and $(a_2,c_2,\nu_2)$ such that $a_2>0,c_2<0,\nu_2\geq 0$, with $\xi_{a_1,b_1,c_1,}(T_1)=\tau_4(\eta_{a_2,c_2,\nu_2}(T_2) ) $ for some $T_1,T_2>0$, then 
    \begin{align*}
       \Phi(t)&= \begin{cases}
            \xi_{a_1,b_1,c_1}(t) & 0\leq t\leq T_1\\
            \tau_4(\eta_{a_2,c_2,\nu_2}(T_1+T_2-t))& T_1\leq t \leq T_1+T_2
        \end{cases}
    \end{align*}
    defines a complete $\SO(4)$-invariant nearly parallel $\G2$-structure on $S^7$. For $\lambda=6/\sqrt{5}$, the homogeneous nearly parallel $\G2$-structure on the squashed $S^7$ described in \eqref{eqn:S7-ortho} can be obtained via this construction for
    \begin{align*}
     \left(a_1,b_1,c_1,T_1\right)&= \left(\frac{\sqrt{5}}{25},\frac{-2\sqrt{5}}{25},\frac{19\sqrt{5}}{25},\frac{\pi}{6}\right),\qquad
        \left(a_2,c_2,\nu_2,T_2\right)= \left(\frac{\sqrt{5}}{25},\frac{-3\sqrt{5}}{5},\frac{\sqrt{5}}{50},\frac{\pi}{6}\right) .
        \end{align*}
  \end{rem}

   In our numerical search, we could not find any other set of parameters satisfying the matching condition, which motivates the following conjecture.

\begin{conj}
Every complete nearly parallel $\mathrm G_2$-metric on $S^7$
invariant under the cohomogeneity-one $\SO(4)$-action considered
here is homogeneous. 
\end{conj}

\begin{rem}[The family $A^{3,1,k+1}$]
For $k\geq 1$, let $A^{3,1,k+1}$ \cite{TomasielloZaffaroni2011} denote the total space of the
principal circle bundle over
$\mathbb{CP}^{2}\#\overline{\mathbb{CP}}^{\,2}$ with Euler class
\[
    e=3x+y+(k+1)z,
    \qquad
    z^{2}+2(x+y)z=0.
\]
Equivalently, up to reversing the parametrization of one of the
singular circles, it is the cohomogeneity-one manifold with group
diagram
\[
 \Delta\mathbb Z_{4}
 \ \subset\
 \left\{
     C^{\,i}_{1,-3},
     C^{\,i}_{2k+1,\,1-2k}
 \right\}
 \ \subset\
 S^{3}\times S^{3},
\]

The matching problem between
$\eta_{a_{1},c_{1},\nu_{1}}$ and
$\eta^{k}_{a_{k},c_{k},\nu_{k}}$ has precisely the above group diagram. For every $k=2,\ldots,8$ considered numerically, this matching
problem admitted a solution with residual of order $10^{-12}$. In every case the matching parameters satisfied, to numerical
precision,
\[
    \nu_{1}=\nu_{k}=0,
    \qquad
    \frac{c_{1}}{a_{1}}=-3,
    \qquad
    \frac{c_{k}}{a_{k}}
       =-\frac{2k+1}{2k-1}.
\]
In particular, the computation recovers the $\mathrm{SU}(2)^2\times \mathrm{U}(1)$ metrics on $A^{3,1,k+1}$, but gives no numerical evidence within this ansatz for a second, genuinely symmetry-breaking nearly parallel $\mathrm{G}_{2}$-metric on these manifolds.
\end{rem}

\section{Local sine cone desingularisation}
\label{sec:desingularisation}

The purpose of this section is to replace the singular end of the sine cone by a small smooth singular orbit.  The model for the smooth singular orbit is a finite quotient of the asymptotically conical (AC) torsion-free $\mathrm G_2$-metric in the 
$C_7$ family constructed by Foscolo--Haskins--Nordstr\"om \cite{FHN}. Let $g_N$ be the invariant nearly Kähler metric on $N=SO(4)/\Z_2^2$. The AC member of the $C_7$ family is asymptotic to the torsion-free $\mathrm G_2$-cone over $(S^3\times S^3)/\mathbb Z_4$, equipped with the nearly Kähler structure descended from $S^3\times S^3$. After taking an additional free $\mathbb Z_2$-quotient of this, we obtain the asymptotic cone $C(N)=(0,\infty)\times N$ with the metric $g_C=dr^2+r^2g_N$, while the sine cone on $N$ is $SC(N)=(0,\pi)\times N$ with $ g_{\mathrm{sc}}=dt^2+(\sin t)^2g_N$. The cone $C(N)$ therefore occurs in two different ways:
\begin{itemize}
    \item it is the tangent cone of the sine cone over the nearly K\"ahler structure on $N$ at $t=0$,
    \item it is the
asymptotic cone of $C_7$ at infinity.
\end{itemize}

We now explain how these models arise from the smooth nearly parallel
$\G2$-structures constructed in Theorem \ref{thm:complete-11}.  In
Subsection~\ref{subsection:inner-limit} we introduce a one-parameter subfamily
$$
\varphi_{\epsilon,\beta(\epsilon)}
=
\xi_{a,0,c},
\qquad \epsilon>0,\quad \beta(\epsilon)>0.
$$
Here $\epsilon$ measures the scale of the singular orbit, while
$\beta$ is the parameter in the singular initial data coming from the $C_7$ family described in Subsection \ref{subsection:C7-family}.  The purpose of the matching argument is to choose a function $\beta=\beta(\epsilon)$, with
$\beta(\epsilon)\to\beta_{\mathrm{ac}}$, for which the resulting
one-parameter family $\varphi_\epsilon:= \varphi_{\epsilon,\beta(\epsilon)}$ locally desingularises the sine cone.

There are two different limits of this same family, corresponding to
two different spatial scales.  In the outer region, $t$ is kept
fixed away from the collapsing singular orbit.  No rescaling is made, and we prove that
$$
\varphi_\epsilon\longrightarrow\varphi_{\mathrm{sc}}
\qquad\text{as }\epsilon\longrightarrow0
$$
on compact subsets of the smooth part of the sine cone.

To see the geometry near the singular orbit, we instead introduce the inner coordinate $\widehat t=\frac{t}{\epsilon}$
and the dilation
$$
D_\epsilon(\widehat t,x)=(\epsilon\widehat t,x).
$$
The rescaled structure and metric are
$$
\widehat\varphi_{\epsilon}
  =\epsilon^{-3}D_\epsilon^*\varphi_\epsilon,
\qquad
\widehat g_{\epsilon}
  =\epsilon^{-2}D_\epsilon^*g_\epsilon.
$$
If $d\varphi_\epsilon
  =\lambda*_{\varphi_\epsilon}\varphi_\epsilon$, then
$$
d\widehat\varphi_\epsilon
  =
  \epsilon\lambda
  *_{\widehat\varphi_\epsilon}\widehat\varphi_\epsilon.
$$
Thus, as $\epsilon\to0$, the nearly parallel constant tends to zero
on the inner scale.  The inner limit is therefore torsion-free.  The
choice $\beta(\epsilon)\to\beta_{\mathrm{ac}}$ gives
$$
\widehat\varphi_\epsilon
\longrightarrow
\varphi_{C_7}
\qquad\text{as }\epsilon\longrightarrow0,
$$
where on the $\SO(4)$-manifold, $\varphi_{C_7}$ denotes the finite quotient described in Proposition \ref{prop:c7-finite-quotient}.

The outer and inner descriptions are related through the intermediate region $\epsilon\ll t\ll1$. Indeed, along any choice of $t=t(\epsilon)$ satisfying these
inequalities,
$$
t\longrightarrow0,
\qquad
\widehat t=\frac{t}{\epsilon}\longrightarrow\infty.
$$
Thus, in the intermediate region, the outer sine cone solution approaches its tangent cone, while
the inner $C_7$ solution approaches its asymptotic cone.  These are the same torsion-free cone as demonstrated in Figure \ref{fig:local-sine cone-desingularisation}.  It is this common cone, rather than the two limiting solutions directly, that provides the matching region.

In Subsection \ref{subsection:cone-fixed-point}, we explain that the common cone is a hyperbolic fixed point of
the scale-invariant dynamical system.  The $C_7$ trajectory approaches this fixed point along its stable direction. There are two independent outgoing directions: one is tangent to the sine cone trajectory, while the other is a faster-growing unstable direction that obstructs the desired matching. We vary the parameter $\beta$ to eliminate this unwanted component. The resulting solution then leaves a neighbourhood of the cone close to the sine cone trajectory, giving the desired local desingularisation illustrated in Figure \ref{fig:local-passage-xC}.

\begin{figure}[H]
\centering
\scalebox{0.85}{
\begin{tikzpicture}[
    x=1.25cm,
    y=1cm,
    >={Latex[length=2.5mm]},
    every node/.style={font=\small}
]

\draw[thick,smooth]
plot coordinates {
(0,0.48)
(0.5,0.55)
(1.0,0.68)
(1.5,0.82)
(2.0,0.97)
(2.5,1.10)
(3.0,1.20)
(3.5,1.32)
(4.0,1.48)
(4.5,1.62)
(5.0,1.70)
(5.5,1.667)
(6.0,1.571)
(6.5,1.414)
(7.0,1.202)
(7.5,0.944)
(8.0,0.651)
(8.5,0.332)
(9.0,0)
};

\draw[thick,smooth]
plot coordinates {
(0,-0.48)
(0.5,-0.55)
(1.0,-0.68)
(1.5,-0.82)
(2.0,-0.97)
(2.5,-1.10)
(3.0,-1.20)
(3.5,-1.32)
(4.0,-1.48)
(4.5,-1.62)
(5.0,-1.70)
(5.5,-1.667)
(6.0,-1.571)
(6.5,-1.414)
(7.0,-1.202)
(7.5,-0.944)
(8.0,-0.651)
(8.5,-0.332)
(9.0,0)
};

\draw[thick] (0,0) ellipse [x radius=0.16,y radius=0.48];

\draw[dashed,gray] (2.55,-1.35) -- (2.55,1.35);
\draw[dashed,gray] (3.65,-1.48) -- (3.65,1.48);

\node[align=center] at (1.15,1.25)
  {scaled AC $C_7$ cap\\$\hat t=t/\epsilon$};

\node[align=center] at (3.10,2.00)
  {Cone region \\ $g_C=ds^2+s^2g_N$};

\node[align=center] at (6.00,2.10)
  {sine cone region\\
   $g_{\mathrm{sc}}=dt^2+\sin^2(t)g_N$};

\node[align=right,anchor=east] at (-0.25,0)
  {smooth singular orbit\\$\hat t=0$};

\node[align=left,anchor=west] at (9.15,0)
  {sine cone singularity\\$t=\pi$};

\draw[->,thick] (0.35,-2.15) -- (3.35,-2.15)
  node[midway,below]
  {$\hat t\to\infty$};

\draw[->,thick] (2.85,-2.65) -- (8.85,-2.65)
  node[midway,below]
  {$t:0\longrightarrow\pi$};

\node[align=center] at (3.10,-1.75)
  {$\epsilon\ll t\ll1$\\
   $1\ll \hat t\ll\epsilon^{-1}$};

\end{tikzpicture}}
\caption{Schematic diagram of the local desingularisation.  The AC $C_7$ metric replaces the singular end $t=0$ of
the sine cone.  In the overlap region, the $C_7$ metric is close to
its asymptotic cone and the sine cone metric is close to its tangent
cone.}
\label{fig:local-sine cone-desingularisation}
\end{figure}

\subsection{The $C_7$ family of torsion-free $\G2$-structures}\label{subsection:C7-family}

We first describe the group action and the cohomogeneity-one torsion-free $\mathrm G_2$-structure of the ALC $C_7$ family obtained in \cite{FHN}. The action is closely related to the $\SU(2)\times \SU(2)$-action described in Section \ref{section:coho-1action}. For more details, see \cite[Proposition 4.1]{FHN}. We describe the one-parameter family of smooth $\SU(2) \times \SU(2) \times  \U(1)$–invariant torsion-free $\G2$–structures using conventions that differ from those of FHN by an embedding of the principal isotropy group in $\SU(2)\times \SU(2)$.

The group $\widetilde G$ defined in \eqref{eqn:G_tilde} acts on itself by left multiplication. For coprime positive integers $m,n$, recall the circle subgroup contained in the
maximal torus generated by the imaginary quaternion $i$,
$$
C^i_{m,-n}
=
\left\{
(e^{im\theta},e^{-in\theta}):
\theta\in\mathbb R
\right\}
\subset
\Sp(1)\times\Sp(1).
$$
Let $C^i_{m,-n}$ act on $\mathbb C$ by $
(e^{im\theta},e^{-in\theta})\cdot z=e^{2i(m+n)\theta}z$. Thus the slice representation has weight $2(m+n)$.  The corresponding
cohomogeneity-one manifold is
$$
M_{m,-n}
=
\widetilde G\times_{C^i_{m,-n}}\mathbb C,
$$
where for $h\in C^i_{m,-n}$, $
(g,z)\sim
\bigl(gh^{-1},h\cdot z\bigr)$. The action of $\widetilde G$ is induced by left multiplication $
g_0\cdot[g,z]=[g_0g,z]$.

At $z=0$, the isotropy group is $C^i_{m,-n}\subset T^2$. There is a fibration
$$
S^1
\simeq
T^2/C^i_{m,-n}
\longrightarrow
\widetilde G/C^i_{m,-n}
\longrightarrow
\widetilde G/T^2
\simeq
S^2\times S^2.
$$
Thus the singular orbit is a circle bundle over $S^2\times S^2$. For coprime $m,n$, it is diffeomorphic to $S^2\times S^3$.

For $z\neq0$, the isotropy group is the kernel of the slice
representation \[\Gamma_{m+n}=
\left\{(e^{im\theta},e^{-in\theta})\in C^i_{m,-n}: e^{2i(m+n)\theta}=1\right\}\cong
\mathbb Z_{2(m+n)}.\]
The group diagram is therefore
$$
\Gamma_{m+n}
\subset
C^i_{m,-n}
\subset
\widetilde G.
$$
For every pair of coprime positive integers $m,n$, the authors of \cite[Theorem C]{FHN} construct a one-parameter family $\{\varphi^{m,n}_\beta\}$ of local torsion-free $\mathrm G_2$-structures closing smoothly on the singular orbit $\widetilde G/C^i_{m,-n}$. In our conventions, for $\beta>0$, 
\begin{align}\label{eqn:phi_mn}
    \g2^{m,n}_\beta& = n^2r_0^3e^{135}-m^2r_0^3e^{246}+d(
    A^{m,n}_\beta (e^{34}-e^{56})-B^{m,n}_\beta e^{12}).
\end{align}
The functions $A^{m,n}_\beta, B^{m,n}_\beta$ satisfy the asymptotics
\begin{align*}
     A^{m,n}_\beta(t) &= r_0^2\beta t+O(t^3), \qquad
     B^{m,n}_\beta(t) = mn\,r_0^3 + \frac{\sqrt{mn}(m+n)r_0}{2\beta}\,t^2 + O(t^4).
\end{align*}
The constant $r_0$ fixes the scale of the metric. In \cite[Theorem D]{FHN}, the authors prove that for every coprime positive pair $(m,n)$, there is a unique critical value $\beta_{\mathrm{ac}}^{m,n}$ such that
\begin{itemize}
    \item for $\beta>\beta_{\mathrm{ac}}^{m,n}$ the metric is complete and ALC,
    \item for $\beta=\beta_{\mathrm{ac}}^{m,n}$ the metric is complete and AC,
    \item for $\beta<\beta_{\mathrm{ac}}^{m,n}$ the metric is incomplete.
\end{itemize}
The AC member is asymptotic, with rate $-3$, to the cone over $S^3\times S^3/\Z_{2(m+n)}$.

The $C_7$ family corresponds to the case $m=n=1$.  The group diagram for the $M_{1,1}$ manifold is
$$
\mathbb Z_4
\subset
C^i_{1,-1}
\subset
\Sp(1)\times\Sp(1).
$$
We denote the $\SU(2)^2\times \U(1)$-invariant family $\g2^{1,1}_\beta$ of torsion-free $\G2$-structures on $M_{1,1}$ by $\g2_\beta $ for convenience. From \eqref{eqn:phi_mn}
\begin{align}\label{eqn:phi_c7}
    \g2_\beta&= r_0^3e^{135}-r_0^3e^{246}+d(
    A_\beta (e^{34}-e^{56})-B_\beta e^{12}),
\end{align}
with
\begin{align}\label{eqn:AB_beta}
    A_\beta(t)&= r_0^2\beta t+O(t^3), \qquad
     B_\beta(t) = r_0^3 + \frac{r_0}{\beta}\,t^2 + O(t^4).
\end{align}

For the $\g2_\beta$ metric to have the required principal orbit $\SO(4)/\mathbb Z_2^2$,  we take a free finite quotient that enlarges the principal isotropy from $\mathbb Z_4$ to the quaternion group.

\begin{prop}[Extension of the finite action over the $C_7$ singular orbit]\label{prop:c7-finite-quotient}

Let $b=(j,j)$. Define $\sigma:M_{1,1}\longrightarrow M_{1,1},
\ \sigma[g,z]=[gb,\overline z]$ to be the free involution preserving the
torsion-free $C_7$ structure. The quotient $X_{C_7}:=M_{1,1}/\langle\sigma\rangle$ has group diagram
\begin{equation*}
 \widehat H=\langle a,b\rangle\cong Q_8
 \subset
 \widehat K=K_0\rtimes\langle b\rangle
 \subset \widetilde G.
\end{equation*}
\par\noindent
After composing the second factor with a quaternionic inner
automorphism, this is the diagram
$\Delta Q_8\subset\Delta\mathrm{Pin}(2)\subset S^3\times S^3$.
Consequently
the effective isotropy groups are
$\mathbb Z_2^2\subset O(2)\subset\SO(4)$, exactly as described in Section \ref{section:coho-1action}.
\end{prop}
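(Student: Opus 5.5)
The proof splits into four checks: $\sigma$ is well defined on $M_{1,1}=\widetilde G\times_{C^i_{1,-1}}\mathbb C$, it is a free involution, it preserves $\varphi_\beta$, and the quotient has the stated group diagram. Throughout, $K_0=C^i_{1,-1}=\{(e^{i\theta},e^{-i\theta})\}$, $a$ denotes a generator of $\Gamma_2\cong\mathbb Z_4$ (for instance $a=(i,-i)$, which acts on the slice by $e^{4i\cdot\pi/2}=1$), and $b=(j,j)$.

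\textbf{Well-definedness and the involution property.} Conjugation by $b$ normalises $K_0$: since $jij^{-1}=-i$, we have $b(e^{i\theta},e^{-i\theta})b^{-1}=(e^{-i\theta},e^{i\theta})$. So $b$ inverts $K_0$.

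Take $h\in K_0$ acting on $\mathbb C$ by the character $\chi(h)=e^{4i\theta}$. Then
\[
[ghb,\ \overline{\chi(h)^{-1}z}]=[gb\,(b^{-1}hb),\ \chi(b^{-1}hb)\,\overline z],
\]
because $\overline{\chi(h)^{-1}}=\chi(h^{-1})=\chi(b^{-1}hb)$. Hence $\sigma$ respects the relation $(g,z)\sim(gh^{-1},h\cdot z)$.

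Next, $\sigma^2[g,z]=[gb^2,z]$ and $b^2=(-1,-1)$. This element lies in $K_0$ (take $\theta=\pi$) and acts trivially on the slice since $e^{4\pi i}=1$. Therefore $\sigma^2=\mathrm{id}$.

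\textbf{Freeness.} Suppose $[gb,\overline z]=[g,z]$. Then $b=h\in K_0$ up to the relation, which is impossible because $b\notin K_0$. More precisely, $gb=gh^{-1}$ would force $b\in K_0$, so no point is fixed.

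\textbf{Preservation of $\varphi_\beta$.} The map $\sigma$ commutes with the left $\widetilde G$-action, so it is determined by its action on the principal orbit together with $z\mapsto\overline z$. On the principal orbit it is right multiplication by $b$, which acts on left-invariant forms by $\mathrm{Ad}_{b^{-1}}$. With $b=(j,j)$ this fixes $e^3,e^4$ and sends
\[
e^1\mapsto -e^1,\quad e^2\mapsto-e^2,\quad e^5\mapsto -e^5,\quad e^6\mapsto -e^6,
\]
assuming $e^1,e^3,e^5$ correspond to $i,j,k$ in the respective factors. Complex conjugation of the slice fixes the radial parameter, so $t$ is preserved.

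Under these sign changes, $e^{135}$, $e^{246}$, $e^{34}-e^{56}$ and $e^{12}$ are all invariant, and $d$ commutes with the pullback. It follows from \eqref{eqn:phi_c7} that $\sigma^*\varphi_\beta=\varphi_\beta$ for every $\beta$.

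This sign bookkeeping is where I expect the main difficulty. The basis identification \eqref{basis6} may give a different sign pattern, in which case one composes with a quaternionic inner automorphism. At worst, $b$ could be replaced by another element such as $(j,-j)$, still normalising $K_0$, chosen so that it preserves both $e^{135}-e^{246}$ and $e^{34}-e^{56}$.

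\textbf{Group diagram.} On $M_{1,1}$ the isotropy groups are $\Gamma_2=\langle a\rangle\subset K_0$, and the quotient by the free $\sigma$ has isotropy groups of the form $\{g\in\widetilde G:\ g\cdot x\in\{x,\sigma x\}\}$.

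At a principal point $[1,z]$ with $z$ real, $\sigma[1,z]=[b,z]=b\cdot[1,z]$, so $b$ stabilises the point in the quotient. This gives $\widehat H=\langle a,b\rangle$. Since $a=(i,-i)$ and $b=(j,j)$ have order $4$, with $aba^{-1}=b^{-1}$ and $a^2=b^2=(-1,-1)$, this group is $Q_8$.

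At the singular point $z=0$, the stabiliser is $\widehat K=K_0\rtimes\langle b\rangle$.

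Conjugating the second factor by $k$ sends $-i\mapsto i$ and $j\mapsto -j$, which turns these into $\Delta Q_8\subset\Delta\mathrm{Pin}(2)$, with $\widehat K/\widehat H\cong S^1$.

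Finally, project to $\SO(4)$, where $(-1,-1)$ maps to the identity. This gives $\mathbb Z_2^2\subset O(2)$, matching $K^i_{1,-1}$ and $H$ from Section~\ref{section:coho-1action}.
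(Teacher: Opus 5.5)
Your route is the same as the paper's. You show that $b$ inverts $K_0=C^i_{1,-1}$, so $\sigma$ descends. Then $b$ joins the principal isotropy, giving $\widehat H=\Gamma\cup b\Gamma=\langle a,b\rangle\cong Q_8$ via $a^2=b^2=(-1,-1)$ and $bab^{-1}=a^{-1}$, and the singular isotropy becomes $K_0\cup bK_0$. Your extra checks go beyond the paper, which takes them for granted: $\sigma^2=\mathrm{id}$, freeness from $b\notin K_0$, and $\sigma^*\varphi_\beta=\varphi_\beta$. However, two steps are wrong as written.

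\textbf{The inner automorphism.} Conjugation by $k$ on the second factor does not produce $\Delta Q_8$. It does send $-i\mapsto i$, but it also sends $j\mapsto -j$. So $b=(j,j)\mapsto(j,-j)$, and $\langle a,b\rangle$ becomes $\{\pm(1,1),\pm(i,i),\pm(j,-j),\pm(k,-k)\}$, which is not diagonal. Use conjugation by $j$ instead: it sends $-i\mapsto i$, $j\mapsto j$ and $-k\mapsto k$. Hence $a\mapsto(i,i)$, $b\mapsto(j,j)$, $K_0\mapsto\{(e^{i\theta},e^{i\theta})\}$ and $\widehat K\mapsto\Delta\mathrm{Pin}(2)$. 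The conjugation-by-$k$ convention in the Remark of Section 2 has the same defect, so do not import it here.

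\textbf{Well-definedness.} Since $|\chi(h)|=1$, you have $\overline{\chi(h)^{-1}}=\chi(h)$, not $\chi(h^{-1})$. Put $h'=b^{-1}hb=h^{-1}$. The correct identity is $[ghb,\overline{\chi(h)^{-1}z}]=[gb\,h',\chi(h')^{-1}\overline z]=[gb,\overline z]$. Your displayed right-hand side instead equals $[gb,\chi(h')^{2}\overline z]$, and would actually refute well-definedness. The mechanism is that conjugation by $b$ inverts $K_0$ and $z\mapsto\overline z$ inverts the character, and these two inversions cancel.

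\textbf{The sign bookkeeping.} This part is robust, and your fallback is vacuous. $\mathrm{Ad}_{b^{-1}}$ is conjugation by $j$ in \emph{both} factors, which is an orientation-preserving rotation by $\pi$ of each $\mathfrak{sp}(1)$. It therefore fixes $e^{135}$ and $e^{246}$. Also, $e^{2l-1}$ and $e^{2l}$ are the self-dual and anti-self-dual forms built from the same $f^{ab}$, i.e.\ left and right multiplication by the same imaginary unit. So it acts on them with the same sign and fixes $e^{12},e^{34},e^{56}$, whichever of $e^3,e^5$ is the $j$-direction. Finally, $(j,-j)$ induces exactly the same $\mathrm{Ad}$ as $(j,j)$, so replacing $b$ would change nothing on forms.
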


\begin{proof}
    Since $C^i_{1,-1}$ acts on the normal disc by weight $4$, the original principal isotropy is
$$ H_0=\ker\rho =\langle a\rangle\cong\mathbb Z_4, \qquad a=u_{\pi/2}=(i,-i). $$
Now let $b=(j,j)$. Since $bu_\theta b^{-1}=u_{-\theta}$, the element $b$ normalises $C^i_{1,-1}$. The free involution
$$ \sigma[g,z]=[gb,\overline z] $$
therefore descends to $X_{C_7}$. After taking this quotient, $b$ becomes part of the principal isotropy, which is enlarged to
$$ \widehat H=\langle a,b\rangle. $$
The relations $a^2=b^2=(-1,-1), bab^{-1}=a^{-1}$ show that
\begin{align*}
    \widehat H &= \{\pm(1,1),\pm(i,-i),\pm(j,j),\pm(k,-k)\} \cong Q_8, \qquad
    \widehat K= \langle C^i_{1,-1},b\rangle = K^i_{1,-1}
\end{align*}
where $K^i_{1,-1}$ is the singular isotropy group for the $\SO(4)$ action.
\end{proof}

The quotient $ X_{C_7}$ is therefore a smooth AC torsion-free $\mathrm G_2$-manifold whose asymptotic cone is
$C\left(\SO(4)/\mathbb Z_2^2\right)$. This is exactly the cone appearing as the tangent cone of the sine cone at $t=0$.

\subsection{Scaled singular orbit and the limiting solution}\label{subsection:inner-limit}

Fix $r_0>0$ and define a subfamily $\varphi_{\epsilon,\beta}\coloneqq \xi_{a_\epsilon,0,c_\epsilon}$ of the family of solutions
$\xi_{a,b,c}$ defined in Theorem \ref{thm:complete-11} by
\begin{equation}\label{eq:small-bolt-family}
 b_\epsilon=0,\qquad
 a_\epsilon=r_0^3\epsilon^3,\qquad
 c_\epsilon=a_\epsilon+\lambda\beta^2r_0^4\epsilon^4.
\end{equation}

With the convention $p_1p_2p_3<0$, the initial square roots are chosen
so that
\begin{equation*}
 p_1(0)=0,\qquad
 p_2(0)=\beta r_0^2\epsilon^2,\qquad
 p_3(0)=-\beta r_0^2\epsilon^2.
\end{equation*}

Introduce the inner variable $\hat t=t/\epsilon$ and set
\begin{equation}\label{eq:inner-rescaling}
 \widehat q_i(\hat t)=\epsilon^{-3}q_i(\epsilon\hat t),
 \qquad
 \widehat p_i(\hat t)=\epsilon^{-2}p_i(\epsilon\hat t),
 \qquad
 \widehat\lambda=\epsilon\lambda.
\end{equation}
Consequently, if
\[
    d\varphi_{\epsilon,\beta}
       =\lambda *\varphi_{\epsilon,\beta},
\]
then
\[
    d\widehat\varphi_{\epsilon,\beta}
       =\widehat\lambda
          *\widehat\varphi_{\epsilon,\beta},
    \qquad
    \widehat\lambda=\epsilon\lambda.
\]
Thus the rescaled nearly parallel equations converge, as
$\epsilon\to0$, to the torsion-free equations obtained by setting
$\widehat\lambda=0$.

\begin{prop}[Torsion-free $\mathrm G_2$ limits under inner rescaling]
\label{prop:inner-limit}
Let $\widehat\varphi_{\epsilon,\beta}$ be the rescaling of
$\g2_{\epsilon,\beta}$ defined by \eqref{eq:inner-rescaling}.
If $\varphi_\beta$ is regular on $[0,R]$, then, for every $k\geq0$,
\[
    \widehat\varphi_{\epsilon,\beta}
        \longrightarrow \varphi_\beta
        \qquad\text{in }C^k([0,R])
\]
as $\epsilon\to0$. In particular, when
$\beta=\beta_{\mathrm{ac}}$, the convergence holds on every compact
interval $[0,R]$, and the inner limit is the AC $C_7$ metric in \eqref{eqn:phi_c7}.
\end{prop}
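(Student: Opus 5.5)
The plan is to treat the rescaled solutions as solutions of a singular initial value problem that depends smoothly on the extra parameter $\widehat\lambda=\epsilon\lambda$. Then I would invoke continuous dependence on parameters, first near $\hat t=0$ and then on the regular part $[\hat t_0,R]$.

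First I would check the effect of the scaling. By the homothety of Remark~\ref{inv_ca}, with $s=\epsilon^{-1}$, the rescaled data $(\widehat q,\widehat p)$ solve \eqref{eqns:gen_ng2} with constant $\widehat\lambda=\epsilon\lambda$. The initial data become
\[
\widehat a=r_0^3,\qquad \widehat c=r_0^3+\widehat\lambda\,\beta^2r_0^4/\epsilon\cdot\epsilon^{0}=r_0^3+\lambda\epsilon\beta^2r_0^4,\qquad \widehat b=0,
\]
so that $\widehat c-\widehat a=\widehat\lambda\beta^2r_0^4$. The combination $(\widehat c-\widehat a)/\widehat\lambda=\beta^2r_0^4$ is therefore independent of $\epsilon$. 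This is exactly what makes $\widehat p_2(0)=-\widehat p_3(0)=\beta r_0^2$ fixed, via \eqref{eq:p-from-q}.

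The key step is to rewrite the $(1,-1)$ singular system of Section~\ref{section:sing_11} in variables that stay regular as $\widehat\lambda\to0$. The constraint $\widehat q_4+\widehat q_5=-\widehat\lambda\widehat p_2\widehat p_3$ degenerates in the limit. I would therefore use $\widehat p_1,\widehat p_2,\widehat p_3$ together with the eight $\widehat q_i$ as unknowns, and use \eqref{eqns:gen_ng2} together with the evolution of the $p_i$ coming from \eqref{SU3FORNG2}. In these variables the right-hand sides are polynomial/rational in $\widehat\lambda$ and smooth at $\widehat\lambda=0$. Here $1/(p_1p_2p_3)$ is multiplied by $\widehat\lambda$, and the $\lambda$-terms in the initial values \eqref{eqn:y_intial_11} enter through $\sqrt{\lambda(c-a)}=\widehat\lambda\beta r_0^2$, which is smooth. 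At $\widehat\lambda=0$ the system becomes the torsion-free system \eqref{eqns:gen_tfg2} plus $d\psi=0$. The initial data reduce to $q_1=q_8^{-}$-type constants $\pm r_0^3$, $p_2(0)=-p_3(0)=\beta r_0^2$, which match the expansion \eqref{eqn:AB_beta} of $\varphi_\beta$: $A_\beta'(0)=r_0^2\beta$ and $B_\beta(0)=r_0^3$.

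Next I would apply Theorem~\ref{thm:IVPsoln} with $\widehat\lambda$ adjoined as a parameter. The resonance structure \eqref{eqn:det_11}, $n^3(n+1)(n+2)^4$, shows that $n\Id-dM_{-1}$ is invertible for all $n\ge1$. The solution is therefore unique, and it depends continuously, indeed smoothly, on $(\widehat\lambda,\beta)$ on some interval $[0,\hat t_0]$ uniform for small $\widehat\lambda$. One must check that $\hat t_0$ and the derivative bounds are uniform, which I would get from the fixed-point (contraction) construction underlying the theorem. By uniqueness at $\widehat\lambda=0$, the limit solution is the torsion-free solution with the same data, and this is $\varphi_\beta$, since the FHN family is characterised by its smooth singular data $(r_0,\beta)$.

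On $[\hat t_0,R]$ the system is regular. If $\varphi_\beta$ is nondegenerate there, i.e.\ it stays in the chamber \eqref{eq:riemannian-chamber} with $p_1p_2p_3\ne0$, then standard smooth dependence on initial data and on $\widehat\lambda$ gives $C^k$ convergence as $\epsilon\to0$. Derivatives of all orders are controlled through the equation. For $\beta=\beta_{\mathrm{ac}}$ the FHN metric is complete, so it is regular on every $[0,R]$.

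I expect the main obstacle to be the uniformity near $\hat t=0$ as $\widehat\lambda\to0$. The formulas \eqref{eqn:y_intial_11} contain terms like $\sqrt{(c-a)/\lambda}$ and $1/(c-a)$, which must be checked to have finite limits under our scaling. Terms such as $\lambda^2/(c-a)=\widehat\lambda/(\beta^2r_0^4)$ do tend to $0$, and the square roots combine to $\beta r_0^2$. The linearisation eigenvalues must also stay nonresonant uniformly in $\widehat\lambda$. Since \eqref{eqn:det_11} is independent of parameters, this should hold.
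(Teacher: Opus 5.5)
Your argument is correct in outline and has the same skeleton as the paper's proof:
\begin{itemize}
\item adjoin $\widehat\lambda=\epsilon\lambda$ as a parameter of the $(1,-1)$ singular initial-value problem;
\item use the parameter-free polynomial \eqref{eqn:det_11} to get uniqueness and a uniform interval $[0,\hat t_0]$;
\item identify the $\widehat\lambda=0$ solution with $\varphi_\beta$ through $A'(0)=r_0^2\beta$, $B(0)=r_0^3$ and uniqueness;
\item finish with continuous dependence on $[\hat t_0,R]$ and completeness of the AC member.
\end{itemize}

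Where you differ is the choice of unknowns, and here your route is the sounder one. The paper stays in the regularised $Y$-variables of Section~\ref{section:sing_11}, where $p$ has been eliminated through \eqref{eq:p-from-q}, and simply asserts that $M_{-1}$ and $M_0$ depend analytically on $(\widehat\lambda,\beta)$. After rescaling, however, the quantities $A,B,C$ of that section are all $O(\widehat\lambda)$ at the initial point. At $\hat t=0$ one has $P_1=|y_2-y_6|/(\widehat\lambda\beta r_0^2)$, so $\partial_{y_2}P_1=O(\widehat\lambda^{-1})$. Consequently:
\begin{itemize}
\item $d_{y_0}M_{-1}$ is unbounded as $\widehat\lambda\to0$, even though $\det(n\Id-d_{y_0}M_{-1})$ does not depend on the parameters;
\item the $Y$-system has no limit at $\widehat\lambda=0$;
\item the same degeneration affects the vector field of \eqref{eqn:system_F} on $[\hat t_0,R]$.
\end{itemize}
Keeping $(p,q)$ as unknowns and evolving $\omega$ by the last equation of \eqref{SU3FORNG2}, as you propose, gives a system that is smooth in $\widehat\lambda$ up to $\widehat\lambda=0$. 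That is what the uniform $\hat t_0$ and the continuity on the regular part actually require.

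The cost is that \eqref{eqn:det_11} is the indicial polynomial of the paper's eight-dimensional $y$-system, not of yours, so it cannot simply be quoted. To close this step:
\begin{itemize}
\item Solve the linear constraints \eqref{eq:constraint-set} for $q_5,q_6,q_7$; this is smooth in $\widehat\lambda$ and avoids extra indicial roots from constraint-violating directions.
\item Regularise $p_1=t\tilde p_1$ and $p_2\pm p_3$ according to their parities.
\item For each fixed $\widehat\lambda>0$, the result is conjugate near the initial point to the paper's system. It therefore has the same indicial roots along the normalisation hypersurface \eqref{eqn:normalization}.
\item Check the root transverse to that hypersurface separately, since off it the two vector fields need not agree.
\item Continuity of your indicial operator then carries non-resonance to $\widehat\lambda=0$. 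Alternatively, read it off directly from the torsion-free singular problem treated in \cite{FHN}.
\end{itemize}
Finally, your first expression for $\widehat c$ is garbled. The value you go on to use, $\widehat c=r_0^3+\widehat\lambda\beta^2r_0^4$, is the correct one.
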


\begin{proof}
Under the rescaling of \eqref{eq:inner-rescaling} the rescaled parameters at the singular orbit are
\[
   \widehat a_\epsilon
       =\epsilon^{-3}a_\epsilon=r_0^3,
   \qquad
   \widehat b_\epsilon=0, \qquad \widehat c_\epsilon
       =\epsilon^{-3}c_\epsilon
       =r_0^3+\widehat\lambda\beta^2r_0^4.
\]
and
\[
   \frac{\widehat c_\epsilon-\widehat a_\epsilon}
        {\widehat\lambda}
       =\beta^2r_0^4.
\]
The choice of square roots determined by $p_1p_2p_3<0$ gives
\[
   \widehat p_1(0)=0,
   \qquad
   \widehat p_2(0)=\beta r_0^2,
   \qquad
   \widehat p_3(0)=-\beta r_0^2.
\]
Under this scaling, the expressions in the first-order terms of the Taylor expansion of $\widehat\varphi_{\epsilon,\beta}$ in \eqref{eqn:y_intial_11} have removable singularities. Indeed,
with the chosen branch,
\[
   \sqrt{\frac{\widehat c_\epsilon-\widehat a_\epsilon}
                    {\widehat\lambda}}
       =\beta r_0^2,
   \qquad
   \sqrt{\widehat\lambda
          (\widehat c_\epsilon-\widehat a_\epsilon)}
       =\widehat\lambda\beta r_0^2.
\]
Substitution into the smooth initial data \eqref{eqn:y_intial_11}
therefore gives an initial point depending analytically on
$(\widehat\lambda,\beta)$.

Since $\widehat b_\epsilon=0$, Theorem \ref{thm:complete-11} shows that the rescaled solutions lie in the distinguished $\U(1)$-invariant locus. At $\widehat\lambda=0$, the expansion
is therefore
\[
 \varphi
   =r_0^3e^{135}-r_0^3e^{246}
     +d\left(
        A(e^{34}-e^{56})-Be^{12}
       \right),
\]
with
\[
       A(0)=0,\qquad A'(0)=r_0^2\beta,
       \qquad B(0)=r_0^3,\qquad B'(0)=0.
\]
These are precisely the initial conditions defining the FHN
torsion-free solution $\varphi_\beta$. Hence uniqueness of the
torsion-free singular initial-value problem identifies the
$\widehat\lambda=0$ solution with $\varphi_\beta$.

For completeness, let $Y_{\widehat\lambda,\beta}$ denote the
regularised variables used in Section~\ref{sec:smoothness}. Their
equations have the form
\[
  \frac{dY}{d\widehat t}
    =
    \frac{1}{\widehat t}
       M_{-1}(Y;\widehat\lambda,\beta)
       +
       M_0(\widehat t,Y;\widehat\lambda,\beta),
\]
and the initial condition
\[
   Y(0)=Y_0(\widehat\lambda,\beta)
\]
depends analytically on $(\widehat\lambda,\beta)$. For the
$(1,-1)$ singular orbit, \eqref{eqn:det_11} gives
\[
 \det\left(
      n\Id-d_{Y_0}M_{-1}
     \right)
   =n^3(n+1)(n+2)^4.
\]
This is nonzero for every positive integer $n$. The solution is therefore unique given $\widehat\lambda,\beta$. Its Taylor
coefficients are determined recursively by equations of the form
\[
 \left(n\Id-d_{Y_0}M_{-1}\right)Y_n
     =\mathcal F_n(Y_1,\ldots,Y_{n-1}),
\]
and depend analytically on $(\widehat\lambda,\beta)$. Hence there exists  $\widehat t_0>0$, independent of sufficiently small
$\widehat\lambda$, such that
\[
  Y_{\widehat\lambda,\beta}
      \longrightarrow Y_{0,\beta}
      \qquad\text{in }C^k([0,\widehat t_0])
\]
for every $k\geq0$. Since
$\widehat\lambda=\epsilon\lambda$, this proves
\[
  \widehat\varphi_{\epsilon,\beta}
      \longrightarrow\varphi_\beta
      \qquad\text{in }C^k([0,\widehat t_0]).
\]

We now extend the convergence away from the singular orbit. Choose
$0<\widehat t_0<R$ such that $\varphi_\beta$ is regular on
$[0,R]$. On $[\widehat t_0,R]$ the orbit is principal, so the
regularised singular equations reduce to an ordinary nonsingular
ODE. Ordinary continuous dependence extends the convergence to every
fixed interval $[0,R]$ on which the limiting torsion-free solution is
regular.

Applying the same argument to the differentiated equations, or
differentiating the ODE inductively, gives convergence in
$C^k([\widehat t_0,R])$ for every $k\geq0$. Combining this with the
convergence near the singular orbit proves
\[
  \widehat\varphi_{\epsilon,\beta}
     \longrightarrow\varphi_\beta
     \qquad\text{in }C^k([0,R]).
\]

Finally, for $\beta=\beta_{\mathrm{ac}}$, the FHN solution
$\varphi_{\beta_{\mathrm{ac}}}$ is complete and asymptotically
conical. It is the AC member of the $C_7$ family. Hence it is regular
on every compact interval $[0,R]$, and the preceding convergence
holds for arbitrary $R>0$.
\end{proof}

The proposition identifies the geometry near the shrinking singular
orbit with the AC $C_7$ model.  To connect this scaled singular orbit limit with the
sine cone solution away from the singular orbit, we study how both solutions approach their common cone.  We therefore turn to the linearisation of the equations \eqref{eqns:gen_ng2} at the cone solution. Although the inner and outer limits have the same cone, this alone does
not guarantee that they can be matched.  We must understand how the $C_7$ solution approaches the cone as $\hat t\to\infty$ and how the sine cone solution approaches it as $t\to0$. The linearisation at the cone determines these directions and allows us to verify that the matching is transverse.

\subsection{The cone solution and its linearisation}\label{subsection:cone-fixed-point}

For a rescaled solution put $ \hat t=e^s$ and
\begin{equation}\label{eq:log-coordinates}
 Q_i(s)=e^{-3s} \widehat{q_i}(e^s),\qquad
 P_i(s)=e^{-2s} \widehat{p_i}(e^s),\qquad
 z(s)=\widehat\lambda e^s.
\end{equation}
After eliminating $P_i$ by the three algebraic constraints \eqref{eq:constraint-set}, the rescaled equations \eqref{eqns:gen_ng2} and $\frac{\d z}{\d s} = z$ become an autonomous nine-dimensional system
\begin{equation}\label{eq:log-system}
 \frac{\mathrm d Y}{\mathrm ds}=\mathcal F(Y),\qquad
 Y=(Q_1,\ldots,Q_8,z).
\end{equation}
If $(\omega,\psi_+,\psi_-)$ is the homogeneous nearly Kähler structure on $S^3\times S^3$, its torsion-free $\G2$-cone has the form, up to conventions,
\begin{align*}
    \varphi_C&=t^2dt\wedge\omega+t^3\psi_+.
\end{align*}
Consequently, its coefficients are exactly homogeneous
\begin{align*}
    p_i^C(t)&=t^2P_i^{NK},\qquad q_i^C(t)=t^3Q_i^{NK}, \ \text{and} \qquad \lambda_C=0.
\end{align*}
Therefore, in normalized variables,
\begin{align*}
    Q_i(s)&=Q_i^{NK},\qquad z(s)=0,
\end{align*}
independently of $s$ and the homogeneous nearly K\"ahler cone is a fixed point $x_C$ of \eqref{eq:log-system} in the slice $z=0$. The nearly-parallel sine cone trajectory is asymptotic to this equilibrium as $s\to-\infty$. With the normalisation $\lambda=4$ in the sine cone formula
of \Cref{sec:open-orbit}, 
\begin{equation}\label{eq:cone-fixed-point}
 x_C=\frac1{54}
 (0,\sqrt3,-\sqrt3,-\sqrt3,\sqrt3,\sqrt3,-\sqrt3,0,0).
\end{equation}

The $\U(1)$-invariant locus $\mathcal P_{\U(1)}$ is an invariant
six-dimensional submanifold of \eqref{eq:log-system}.  Direct
linearisation of the reduced six-dimensional system at $x_C$ gives
\begin{equation}\label{eq:u1-spectrum}
 \operatorname{spec}
 \left(D\mathcal F(x_C)|_{T_{x_C}\mathcal P_{\U(1)}}\right)
 =\left\{1,-1,-3,-3,
 \frac{-7+\sqrt{145}}2,
 \frac{-7-\sqrt{145}}2\right\}.
\end{equation}

Recall that a fixed point $p$ of a dynamical system $x' = F(x)$
is called hyperbolic if the real parts of all eigenvalues of the linearisation $DF(p)$ of the system at the
fixed point are non-zero. Suppose that $DF(p)$ has $k$ eigenvalues with negative real part
and $n-k$ eigenvalues with positive real part, counted with algebraic
multiplicity.  The stable manifold theorem then gives local invariant
submanifolds $W^s_{\mathrm{loc}}(p)$ and
$W^u_{\mathrm{loc}}(p)$, of dimensions $k$ and $n-k$,
respectively.  Their tangent spaces at $p$ are the stable and
unstable spectral subspaces of $DF(p)$.  Trajectories in
$W^s_{\mathrm{loc}}(p)$ converge to $p$ in forward time, whereas
trajectories in $W^u_{\mathrm{loc}}(p)$ converge to $p$ in backward time \cite[Chapter 2.7]{Per96}.  Moreover, the
Hartman--Grobman theorem states that, in a neighbourhood of $p$, the nonlinear flow is topologically conjugate to the flow of the linearised system $\dot{y}=DF(p)y$ \cite[Chapter 2.8]{Per96}.  Thus the signs of the real parts of the eigenvalues determine the qualitative behaviour of the flow near a hyperbolic equilibrium.

Since all eigenvalues of $D\mathcal F(x_C)$ are real and nonzero, the conical fixed point $x_C$ is hyperbolic. The stable manifold theorem therefore yields smooth local stable and unstable manifolds tangent at $x_C$ to the sums of the negative and positive eigenspaces, respectively.
 
For the unreduced nine-dimensional algebraic system the additional
eigenvalues are $-12$ and a second copy of each of the last two roots. Let $n=D\mathcal H(x_C)$ denote the normal covector to the volume-normalisation constraint \eqref{eqn:normalization}; then $nD\mathcal F(x_C)=-12n$, providing an independent check of the
linearisation.  The tangent $v_{\mathrm{sc}}$ to the sine cone orbit
satisfies
\begin{equation*}
 D\mathcal F(x_C)v_{\mathrm{sc}}=v_{\mathrm{sc}}.
\end{equation*}

We record more precisely which infinitesimal deformations remain after fixing the metric of $X_{C_7}$.  This identification is essential in the proof that the matching is transverse.

\begin{lemma}
\label{lem:fixed-r0-modes}
Let $\mathcal T=\mathcal P_{\U(1)}\cap\{z=0\}$ be the torsion-free $\U(1)$-invariant locus. At $x_C$, one has
\begin{equation}\label{eq:torsion-free-splitting}
T_{x_C}\mathcal T
=E_{-1}\oplus E_{-3}^{(1)}\oplus E_{-3}^{(2)}
\oplus E_{\gamma_-}\oplus E_{\gamma_+},
\qquad
\gamma_\pm=\frac{-7\pm\sqrt{145}}2.
\end{equation}
The eigenspace $E_{-1}$ consists of infinitesimal deformations induced
by translating the radial coordinate $\hat t$, while
$E_{-3}^{(1)}\oplus E_{-3}^{(2)}$ consists of infinitesimal
deformations varying the conserved coefficients $q_1$ and $q_8$.
After fixing the radial origin at $\hat t=0$ and fixing the
singular-orbit data $q_1=r_0^3, q_8=-r_0^3$, these deformation directions are excluded. In the resulting
two-dimensional reduced phase space, the stable and unstable tangent
spaces at the cone are respectively
\begin{equation}\label{eqn:fixed-r0-modes}
E^s_{r_0}(x_C)=E_{\gamma_-},
\qquad
E^u_{r_0}(x_C)=E_{\gamma_+}.
\end{equation}
Moreover, the infinitesimal variation obtained by differentiating the
one-parameter family of AC ends in
\cite[Proposition~5.3(ii)]{FHN} with respect to its parameter lies in
$E_{\gamma_-}$. Thus this family is tangent to
$E_{\gamma_-}$ at the cone.
\end{lemma}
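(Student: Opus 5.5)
We plan to work entirely inside the torsion-free $\U(1)$-invariant locus $\mathcal T=\mathcal P_{\U(1)}\cap\{z=0\}$. This locus is invariant under \eqref{eq:log-system} and is five-dimensional. Indeed, $\mathcal P_{\U(1)}$ is six-dimensional and $z$ is one of its coordinates, with $dz/ds=z$. Hence $\{z=0\}$ is invariant and the eigenvalue $1$ of \eqref{eq:u1-spectrum} is carried by the $z$-direction. It follows that $\operatorname{spec}(D\mathcal F(x_C)|_{T_{x_C}\mathcal T})=\{-1,-3,-3,\gamma_-,\gamma_+\}$, and since these eigenvalues are real we obtain the splitting \eqref{eq:torsion-free-splitting}. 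The eigenvalue $-3$ is not a priori semisimple. We will check that it is by exhibiting two independent eigenvectors, namely the ones coming from the conserved quantities below.

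The next step is to identify the symmetry modes explicitly. On $\mathcal T$, the torsion-free equations \eqref{eqns:gen_tfg2} give $q_1'=q_8'=0$. In log variables $Q_1=e^{-3s}\hat q_1$, so perturbing the constant $\hat q_1$ (respectively $\hat q_8$) while staying in $\mathcal T$ produces a solution $x_C+\delta\, e^{-3s}w_1+O(\delta^2)$. Here $w_1$ is the linear perturbation of the cone obtained by adding a constant to $q_1$, followed by solving the remaining equations to first order. Differentiating in $\delta$ shows that $w_1$ and $w_8$ are eigenvectors with eigenvalue $-3$. They are independent, since they have independent $Q_1$ and $Q_8$ components, and $Q_1=Q_8=0$ at $x_C$ by \eqref{eq:cone-fixed-point}. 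This also proves semisimplicity of $-3$. Similarly, $\hat t\mapsto\hat t+\tau$ maps the cone to another solution. Its log-variables are $Q_i(s)=e^{-3s}(e^s+\tau)^3Q_i^{NK}$, and the derivative in $\tau$ at $\tau=0$ is $3e^{-s}x_C$ restricted to the $Q$-coordinates. So $E_{-1}=\mathbb R\,x_C|_{Q}$; this direction is nonzero and tangent to $\mathcal T$ because the conditions defining $\mathcal P_{\U(1)}$ are homogeneous. Fixing $\hat t$'s origin and the values $q_1=r_0^3$, $q_8=-r_0^3$ removes exactly these three directions. This leaves a two-dimensional quotient on which $D\mathcal F(x_C)$ has eigenvalues $\gamma_-<0<\gamma_+$, and this gives \eqref{eqn:fixed-r0-modes}.

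For the last claim, we take the one-parameter family of AC ends of \cite[Proposition~5.3(ii)]{FHN}. These are torsion-free, $\U(1)$-invariant solutions with fixed $q_1,q_8$, asymptotic to $C(N)$ as $\hat t\to\infty$. Hence in log variables they are trajectories converging to $x_C$ as $s\to+\infty$, so they lie in the local stable manifold of $x_C$ inside $\mathcal T$. The plan is to use the FHN asymptotics: the family parameter multiplies the leading decaying term, which decays like $\hat t^{\gamma_-}$ with rate matching the stated rate $-3$ after the conventions are aligned. The remaining terms, such as $e^{-3s}$ and $e^{-s}$ contributions, are fixed by $r_0$ and the radial origin. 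Differentiating in the parameter then kills those fixed contributions. Modulo $E_{-1}\oplus E_{-3}^{(1)}\oplus E_{-3}^{(2)}$, the variation is therefore a stable vector in the reduced plane, and so it lies in $E_{\gamma_-}$.

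The main obstacle is this final step. It requires translating FHN's parametrisation and decay rates into our coordinates \eqref{eq:log-coordinates}, and in particular checking that the parameter enters at the $\gamma_-$ rate rather than through the $-3$ or $-1$ modes. If the exponents cannot be matched cleanly, we would instead argue abstractly, using that the reduced stable manifold is one-dimensional and tangent to $E_{\gamma_-}$.
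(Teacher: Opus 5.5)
Your outline follows the paper's proof step for step. You discard the eigenvalue $1$ using the $z$-equation, identify $E_{-1}$ with translation of $\hat t$, identify the two $(-3)$-modes with the conserved coefficients $q_1,q_8$, and read off the AC-parameter mode from the FHN asymptotics. Your quotient argument for removing the eigenvalue $1$ is equivalent to the paper's remark that $v_{\mathrm{sc}}$ has nonzero $z$-component. Your explicit identification $E_{-1}=\mathbb R\,x_C$ and your concern about semisimplicity of $-3$ are points the paper omits. The quickest way to settle semisimplicity is this: $dQ_1/ds=-3Q_1$ and $dQ_8/ds=-3Q_8$ hold exactly on $\mathcal T$, so $dQ_1,dQ_8$ are independent left eigenvectors. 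Hence the geometric multiplicity of $-3$ equals its algebraic multiplicity $2$. This also justifies the solvability you use implicitly when you ``solve the remaining equations to first order''.

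The last step is where your proposal is incomplete, and part of your description is wrong. The $\chi$-mode does not decay at ``rate $-3$''. That rate belongs to $E_{-3}^{(1)}\oplus E_{-3}^{(2)}$, i.e.\ to the $q_1,q_8$ contributions that give the AC rate of $C_7$, and fixing $r_0$ removes exactly these. Likewise, $\chi$ does not multiply the leading deviation of the full trajectory from $x_C$; that deviation comes from the $q_1,q_8$ modes. The concrete input the paper uses is FHN's expansion of the combination $B-A$, whose leading coefficient is $\chi$ itself: $\frac{54}{\sqrt3}\hat t^{-3}(B-A)=\chi\,\hat t^{-\nu_\infty}+O(\hat t^{-12})$ with $\nu_\infty=\frac{7+\sqrt{145}}2$. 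Since $-\nu_\infty=\gamma_-$ exactly and $-12<\gamma_-$, differentiating in $\chi$ gives a variation whose leading term is $e^{\gamma_- s}$ with coefficient $1$. This variation is nonzero and therefore spans the line $E_{\gamma_-}$. Your abstract fallback says that a decaying variation, taken modulo the symmetry modes, must lie in the stable line of the reduced plane. That only shows the $\chi$-variation lies in $E_{\gamma_-}$, possibly trivially. It cannot give the non-vanishing needed for the family to be tangent to $E_{\gamma_-}$, which is the content of the lemma's final sentence.
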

\begin{proof}
Since $z'=z$, the infinitesimal deformation in the $z$-direction
has eigenvalue $1$. By \eqref{eq:u1-spectrum}, this eigenvalue is
simple, and its eigenspace is generated by the tangent
$v_{\mathrm{sc}}$ to the sine cone trajectory. Since
$v_{\mathrm{sc}}$ has nonzero $z$-component, it is transverse to
the torsion-free locus $\{z=0\}$. Restricting the linearisation to $\mathcal T$ therefore removes this eigenspace and gives \eqref{eq:torsion-free-splitting}.

We next identify the geometric meaning of the remaining infinitesimal deformations. On the cone, $\widehat q_i^C(\hat t)=Q_i^C\hat t^3$. Translating the radial coordinate by $\hat t\mapsto\hat t+\delta$ gives
$$
e^{-3s}\widehat q_i^C(e^s+\delta)
 =
Q_i^C(1+\delta e^{-s})^3
 =
Q_i^C+3\delta Q_i^C e^{-s}
 +O(\delta^2e^{-2s}).
$$

Thus the infinitesimal deformation induced by translating the radial coordinate has leading order $e^{-s}$, and belongs to $E_{-1}$.

In the torsion-free system, $q_1$ and $q_8$ are conserved under the evolution equations. Infinitesimal variations of these constants give
$$
\delta Q_1(s)=\delta q_1\,e^{-3s},
\qquad
\delta Q_8(s)=\delta q_8\,e^{-3s},
$$
hence these two independent infinitesimal deformations span
$E_{-3}^{(1)}\oplus E_{-3}^{(2)}$. Fixing the radial origin at $\hat t=0$ and fixing $q_1=r_0^3,q_8=-r_0^3$ excludes the infinitesimal deformations in
$E_{-1}\oplus E_{-3}^{(1)}\oplus E_{-3}^{(2)}$.

It remains to identify the infinitesimal deformation determined by the one-parameter family of AC ends. Let $\chi$ denote the parameter
in \cite[Proposition~5.3(ii)]{FHN}. In terms of the FHN coefficient functions $A$ and $B$, their asymptotic expansion satisfies
$$
\frac{54}{\sqrt3}\hat t^{-3}(B-A)
 =
\chi\hat t^{-\nu_\infty}+O(\hat t^{-12}),
\qquad
\nu_\infty=\frac{7+\sqrt{145}}2.
$$
Differentiating with respect to $\chi$ therefore produces a nonzero infinitesimal deformation whose leading term is
$$
\hat t^{-\nu_\infty}
 =
e^{-\nu_\infty s}
 =
e^{\gamma_-s},
\qquad
\gamma_-=-\nu_\infty.
$$
Since $E_{\gamma_-}$ is one-dimensional, this infinitesimal variation spans $E_{\gamma_-}$. Finally, since $\gamma_-<0<\gamma_+$ after the radial origin and the singular-orbit data have been fixed, $E_{\gamma_-}$ and $E_{\gamma_+}$ are respectively the
stable and unstable tangent spaces at $x_C$. 
\end{proof}

The desingularisation is a two-scale matching problem. On the inner
scale $\widehat t=t/\epsilon$, the solution converges to the AC
$C_7$ metric, while on the outer scale $t$ it should converge to
the sine cone. These two limiting solutions are not compared directly.
Instead, both are compared with their common torsion-free cone in the
intermediate region
\[
        \epsilon\ll t\ll1,
        \qquad\text{equivalently}\qquad
        1\ll\widehat t\ll\epsilon^{-1}.
\]
In the scale-invariant variables, this common cone is the fixed point
$x_C$.

The rate at which the $C_7$ metric approaches $x_C$ is essential
for controlling this overlap. More precisely,
\[
   Y_{C_7}(s)-x_C=O(e^{-3s})
   =O(\widehat t^{-3}),
   \qquad \widehat t=e^s\longrightarrow\infty.
\]
The exponent $-3$ arises from the two conserved coefficients
$q_1$ and $q_8$. Since the scale-invariant variables are
$Q_i=\widehat t^{-3}q_i$, fixing the nonzero values
\[
        q_1=r_0^3,\qquad q_8=-r_0^3
\]
produces precisely an $O(\widehat t^{-3})$ contribution. Thus the
$-3$-eigenspaces describe the leading approach of the $C_7$
trajectory to the cone. Once
the scale $r_0$ has been fixed, their coefficients are fixed as well.

On the other hand, the sine cone approaches the same fixed point with
\[
       Y_{\mathrm{sc}}(\log t)-x_C=O(t)
       \qquad\text{as }t\longrightarrow0.
\]
Evaluating the two estimates at $t=\epsilon R$, or equivalently
$\widehat t=R$, gives the two principal errors
\[
       O(R^{-3})
       \qquad\text{and}\qquad
       O(\epsilon R).
\]
Consequently one may choose $R\to\infty$ while
$\epsilon R\to0$, so that both errors tend to zero. The following lemma makes the overlap estimate precise.

\begin{lemma}[Overlap near the cone]\label{lem:overlap}
Fix $R>1$ sufficiently large.  For $\epsilon>0$ small and
$\beta$ close to $\beta_{\mathrm{ac}}$, the solution $Y_{\epsilon,\beta}$ of \eqref{eq:log-system} is
defined up to $\widehat t=R$. Away from the singular orbit at $t=\epsilon R$, 
$$
\left|
 Y_{\epsilon,\beta}(\log R)
-
Y_{\mathrm{sc}}(\log(\epsilon R))
\right|
\leq
CR^{-3}+\rho_R(\epsilon,\beta)+C\epsilon R.
$$
Thus the two solutions are arbitrarily close if $R$ is 
large and  $\epsilon$,
$\lvert\beta-\beta_{\mathrm{ac}}\rvert$ are small.
\end{lemma}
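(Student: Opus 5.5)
The estimate comes from a three-term triangle inequality through two intermediate points. Both are obtained by evaluating the cone-normalised limiting solutions at the matching time. The inner comparison point is $Y_{C_7}(\log R)$, the scale-invariant variables \eqref{eq:log-coordinates} of the AC $C_7$ solution $\varphi_{\beta_{\mathrm{ac}}}$ with $\widehat\lambda=0$, augmented by $z=\epsilon\lambda R$ so that it lies in the same nine-dimensional phase space. The outer comparison point is the cone fixed point $x_C$, with the $z$-coordinate again set to $\epsilon\lambda R$. We then write
\[
 \bigl|Y_{\epsilon,\beta}(\log R)-Y_{\mathrm{sc}}(\log(\epsilon R))\bigr|
 \le |Y_{\epsilon,\beta}(\log R)-Y_{C_7}(\log R)|
 +|Y_{C_7}(\log R)-x_C|
 +|x_C-Y_{\mathrm{sc}}(\log(\epsilon R))|.
\]
The key observation is that the map from $(\widehat q,\widehat p,\widehat\lambda)$ at $\widehat t=R$ to $Y$ is fixed and smooth, given by multiplication by $R^{-3}$ on the $\widehat q_i$ and $z=\widehat\lambda R$. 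Closeness in the inner coordinates therefore transfers directly to closeness in $Y$.

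For the first term we invoke Proposition~\ref{prop:inner-limit} on the compact interval $[0,R]$. Since $\varphi_{\beta_{\mathrm{ac}}}$ is complete, it is regular on $[0,R]$. By openness of regularity and continuous dependence on $\beta$ in the nonsingular part of the flow, $\varphi_\beta$ is also regular on $[0,R]$ for $|\beta-\beta_{\mathrm{ac}}|$ small. The same proposition, applied jointly in $(\widehat\lambda,\beta)$, shows that the solution $Y_{\epsilon,\beta}$ exists up to $\widehat t=R$ and stays in the Riemannian chamber \eqref{eq:riemannian-chamber}. This is where the initial data, which depend analytically on $(\widehat\lambda,\beta)$, and the uniqueness coming from \eqref{eqn:det_11} are used. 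We define $\rho_R(\epsilon,\beta)$ to be this first term. The $C^0$ convergence in $(\widehat\lambda,\beta)\to(0,\beta_{\mathrm{ac}})$ then gives $\rho_R\to0$ for each fixed $R$. The $z$-components of the two points agree exactly, so they contribute nothing.

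For the second term we use the AC asymptotics of \cite{FHN}: the solution approaches its cone at rate $-3$. In the variables $Q_i=\widehat t^{-3}\widehat q_i$, the constants $q_1=r_0^3$ and $q_8=-r_0^3$ contribute exactly $O(R^{-3})$. The remaining modes decay at least as fast, being radial translation (rate $-1$ in $\widehat t^{-1}$ times a shift, and absent here because the origin is fixed) and $E_{\gamma_-}$ with $\gamma_-<-3$, as in Lemma~\ref{lem:fixed-r0-modes}. The $z$-coordinates agree by construction. This yields $CR^{-3}$.

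For the third term we use the explicit formula \eqref{eqn:sine cone}. Its coefficients are $\sin^2t$ and $\sin^3t\,(\sin t,\cos t)$, and these are analytic in $t$. Dividing by $t^2$ and $t^3$ gives $Q_i^{\mathrm{sc}}(\log t)=Q_i^{NK}+O(t)$. The sine cone has $z=\lambda t$, which equals $\epsilon\lambda R$ at $t=\epsilon R$ and so matches the $z$-coordinate chosen above. Since $x_C$ is the cone fixed point \eqref{eq:cone-fixed-point}, this term is $O(\epsilon R)$ after fixing the normalisations of $\lambda$ consistently.

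The main obstacle is the first step: securing uniformity so that Proposition~\ref{prop:inner-limit} holds uniformly for $\beta$ near $\beta_{\mathrm{ac}}$, not just for fixed $\beta$. I would handle it by treating $(\widehat\lambda,\beta)$ as joint parameters in the regularised singular system. Smooth dependence of the singular IVP solution on these parameters, followed by the regular flow on $[\widehat t_0,R]$, gives joint continuity. A minor additional check is that the chamber conditions hold uniformly along the way.
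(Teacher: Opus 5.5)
Your proposal matches the paper's proof. It uses the same triangle inequality through $Y_{C_7}(\log R)$ and $x_C$, with $\rho_R$ supplied by Proposition~\ref{prop:inner-limit}, the $R^{-3}$ term taken from the AC asymptotics, and the $O(t)$ approach of the explicit sine cone; your bookkeeping of the $z$-coordinate and your joint-continuity remark in $(\widehat\lambda,\beta)$ make explicit what the paper leaves implicit.

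One caution on the middle term. Placing the radial origin at the singular orbit fixes the $E_{-1}$ (radial translation) component of $Y_{C_7}-x_C$ but does not remove it, because the apparent vertex of the asymptotic cone need not sit at $\widehat t=0$; this already fails for the Bryant--Salamon metrics. So without re-centring $\widehat t$, the honest bound for that term is $CR^{-1}$ rather than $CR^{-3}$. The paper's own appeal to the rate $-3$ is equally silent on this point, and it does not affect the qualitative conclusion of the lemma.
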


\begin{proof}
By Proposition~\ref{prop:inner-limit}, for every fixed $R$ as $(\epsilon,\beta)\to(0,\beta_{\mathrm{ac}})$,
$$
Y_{\epsilon,\beta}(\log R)
\longrightarrow
Y_{\beta_{\mathrm{ac}}}(\log R).
$$
We denote the difference between these two terms by
$\rho_R(\epsilon,\beta)$. The solution $Y_{\beta_{\mathrm{ac}}}=Y_{C_7}$ approaches the cone
with rate $-3$.  Hence
$$
\left|
Y_{\beta_{\mathrm{ac}}}(\log R)-x_C
\right|
\leq CR^{-3}.
$$
Thus we get
\begin{equation*}
\left|
Y_{\epsilon,\beta}(\log R)-x_C
\right|
\leq
CR^{-3}+\rho_R(\epsilon,\beta),
\end{equation*}
where $\rho_R(\epsilon,\beta)\longrightarrow 0$ as
$(\epsilon,\beta)\longrightarrow (0,\beta_{\mathrm{ac}})$.
The sine cone approaches the same cone as $t\to0$, and
$$
\left|Y_{\mathrm{sc}}(\log t)-x_C\right|\leq Ct.
$$
Setting $t=\epsilon R$ gives
$$
\left|
Y_{\mathrm{sc}}(\log(\epsilon R))-x_C
\right|
\leq C\epsilon R.
$$
The triangle inequality now gives the claimed overlap estimate.
\end{proof}

There are two distinct pieces of asymptotic information here. In the
full scale-invariant phase space, the reference $C_7$ trajectory
approaches $x_C$ at rate $-3$, because of the fixed conserved
coefficients $q_1$ and $q_8$. After fixing the radial origin, the
scale $r_0$, and hence these conserved coefficients, the remaining
torsion-free deformation space has one stable direction
$E_{\gamma_-}$ and one unstable direction $E_{\gamma_+}$. The
variation of the AC family with respect to $\beta$ is tangent to
$E_{\gamma_-}$.

For the nearly parallel problem there are two outgoing directions:
$E_1$, tangent to the sine cone trajectory, and
$E_{\gamma_+}$, which moves the solution away from the sine cone.
A general perturbation of the $C_7$ solution therefore leaves a
neighbourhood of the cone with a nonzero $E_{\gamma_+}$-component.
The role of the parameter $\beta$ is to eliminate precisely this
component. Once it has been removed, the only remaining outgoing
direction is $E_1$, and the solution follows the sine cone
trajectory on the outer scale.

\subsection{Transverse matching and local desingularisation}

The spectrum \eqref{eq:u1-spectrum} contains two positive roots,
$1$ and $\gamma_+=\frac{-7+\sqrt{145}}2$.
The first is the sine cone direction, while the second is an unwanted
outgoing direction. Hence dimension counting alone does not show that the trajectory of
$\widehat{\varphi}_{\epsilon,\beta}$ leaves a neighbourhood of
$x_C$ in the sine cone direction.

Let $Y_\beta$ denote the torsion-free FHN trajectory and set $ V=\left.\partial_\beta Y_\beta\right|_{\beta=\beta_{\mathrm{ac}}} $. Differentiating $ \dot Y_\beta=\mathcal F_0(Y_\beta) $ with respect to $\beta$ gives
$$ \dot V=D\mathcal F_0(Y_{\beta_{\mathrm{ac}}})V. $$
The asymptotic condition for $V$ at the singular orbit is obtained by differentiating the $\beta$-dependent Taylor expansion of the FHN family. From Lemma \ref{lem:fixed-r0-modes} in the fixed-$r_0$, torsion-free, $\U(1)$-invariant reduced space, the two eigenvalues are $\gamma_-<0<\gamma_+$. Let $v_\pm$ be the generators of $E_{\gamma_\pm}$, respectively. Then 
\begin{align*}
    V(s)&=\mathfrak{a}e^{\gamma_+ s} v_++ \mathfrak{b} e^{\gamma_-s}v_{-}+ \textup{l.o.t}.
\end{align*}
Choose a left eigenvector $\ell_+$ for $\gamma_+$ and normalise it by
$\ell_+(v_+)=1$. Since $\ell_+(v_-)=0$, the $E_{\gamma_+}$ coefficient is
\begin{equation}\label{eq:exchange-coefficient}
\mathfrak a(s):=
e^{-\gamma_+s}\ell_+(V(s)),
\qquad
\mathfrak a:=
\lim_{s\to\infty}\mathfrak a(s).
\end{equation}
where the limit is taken in the asymptotic coordinates. Since the $C_7$ solution approaches $x_C$ with rate $-3$ and $|V(s)|=O(e^{\gamma_+s})$, the limit exists because
\begin{align*}
    \mathfrak{a}'(s)&=e^{-\gamma_+s}(D\mathcal{F}_0(Y_{\beta_{ac}}(s))-D\mathcal{F}_0(x_C))V(s)=O(e^{-\gamma_+s} e^{-3s}e^{\gamma_+s})=O(e^{-3s})
\end{align*}
 which is integrable on $[s_0,\infty)$ for $s_0>0$.  Its vanishing is independent of the normalisation of $\ell_+$.

\begin{prop} \label{prop:exchange-nonzero}
For the AC member of the $C_7$ family $ \mathfrak a\neq0.$
\end{prop}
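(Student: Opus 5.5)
The idea is to identify the coefficient $\mathfrak a$ with the derivative, with respect to $\beta$, of the coefficient of the growing mode $\hat t^{\gamma_+}$ in the large-$\hat t$ expansion of the FHN solutions. Then we use the trichotomy of \cite[Theorem D]{FHN} to show that this derivative cannot vanish at $\beta_{\mathrm{ac}}$.

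First, in the fixed-$r_0$, torsion-free, $\U(1)$-invariant reduced phase space of Lemma~\ref{lem:fixed-r0-modes}, $x_C$ is a hyperbolic saddle with a one-dimensional stable direction $E_{\gamma_-}$ and a one-dimensional unstable direction $E_{\gamma_+}$. By the stable manifold theorem there is a smooth function $\Theta$, defined on a transversal section $\Sigma=\{Y:\ |Y-x_C|=\delta\}$ near $x_C$, such that $W^s_{\mathrm{loc}}(x_C)\cap\Sigma=\{\Theta=0\}$. We may take $\Theta$ to be the $\ell_+$-component in $C^1$-linearising coordinates, which exist because the eigenvalues $\gamma_\pm$ are real and non-resonant. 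For $\beta$ near $\beta_{\mathrm{ac}}$, the trajectory $Y_\beta$ reaches $\Sigma$ at a time $s_\beta$ depending smoothly on $\beta$. This follows from Proposition~\ref{prop:inner-limit} and from the fact that $Y_{\beta_{\mathrm{ac}}}$ converges to $x_C$ at rate $-3$. Define $h(\beta)=\Theta(Y_\beta(s_\beta))$. Differentiating, and transporting the $\ell_+$-component of the variation $V$ from $s_\beta$ to $s=\infty$ using the integrable estimate $\mathfrak a'(s)=O(e^{-3s})$ already established, we obtain
\[
h'(\beta_{\mathrm{ac}})=\kappa\,\mathfrak a
\]
for some non-zero constant $\kappa$, namely $e^{\gamma_+ s}$ evaluated at the exit time times the normalisation of $\Theta$. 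The tangential component of $V$ along the flow does not contribute, because $D\Theta$ annihilates the flow direction on $W^s$ only up to terms controlled by the same estimate. So it suffices to show $h'(\beta_{\mathrm{ac}})\neq 0$.

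Second, we interpret the sign of $h$. By \cite[Theorem D]{FHN}, $h(\beta_{\mathrm{ac}})=0$. For $\beta>\beta_{\mathrm{ac}}$ the solution is ALC, and for $\beta<\beta_{\mathrm{ac}}$ it is incomplete. Both behaviours are incompatible with staying near the cone. Hartman--Grobman then shows that these two trajectories leave the neighbourhood of $x_C$ along the two opposite branches of $W^u_{\mathrm{loc}}$, so $h$ changes sign at $\beta_{\mathrm{ac}}$. That they use opposite branches should follow from the FHN comparison theory: the ALC branch has the circle fibre length $\sim B-A$ stabilising with positive sign, and the incomplete branch has the opposite sign of the same combination $\frac{54}{\sqrt3}\hat t^{-3}(B-A)$. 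A sign change alone only gives a zero of odd order, however, not a non-zero derivative.

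The main obstacle is upgrading this sign change to strict transversality. The plan is to use the FHN monotonicity, or comparison, argument \cite[Section~5]{FHN}: their solutions are ordered in $\beta$, with $A_\beta$ and $B_\beta$ monotone in $\beta$ at every fixed $\hat t$. Differentiating this comparison gives a positive lower bound for the $\beta$-variation of $B-A$ on some interval $[\hat t_1,\hat t_2]$ in the cone region. Writing $V=\mathfrak a e^{\gamma_+ s}v_+ +\mathfrak b e^{\gamma_- s}v_-+O(e^{-3s})$ there, if $\mathfrak a=0$ then $V$ would decay like $e^{\gamma_- s}$. In that case $V$ would be a bounded solution of the linearised equation with the singular-orbit asymptotics obtained from \eqref{eqn:AB_beta} ($\partial_\beta A'(0)=r_0^2$, $\partial_\beta B''(0)=-2r_0/\beta^2$), i.e.\ a non-trivial decaying infinitesimal deformation of the AC $C_7$ metric within the $\U(1)$-invariant class with fixed $r_0$ and fixed radial origin. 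FHN show that the AC member is isolated and rigid in this class: the unique AC solution is obtained from the stable manifold meeting the one-parameter family of smooth-bolt solutions, and the rate $-\nu_\infty$ decaying deformation is not smooth at the bolt. This rules out such a $V$, so $\mathfrak a\neq0$. If this rigidity is not directly extractable from \cite{FHN}, the fallback is a computer-assisted verification in the style of Section~\ref{sec:validation}: a validated integration of the pair $(Y_{\beta_{\mathrm{ac}}},V)$ from the bolt to a large $s_0$, followed by an enclosure of $\mathfrak a(s_0)$ bounded away from zero, with the tail controlled by $|\mathfrak a-\mathfrak a(s_0)|\le Ce^{-3s_0}$.
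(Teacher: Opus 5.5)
There is a genuine gap at the decisive step. Your reduction is fine: it suffices to rule out a variation $V$ that decays like $e^{\gamma_-s}$, i.e.\ to exclude $\mathfrak a=0$. But the argument you give for excluding it is not a proof. You say that FHN show the AC member is ``isolated and rigid'' and that ``the rate $-\nu_\infty$ decaying deformation is not smooth at the bolt''. That is a restatement of the proposition, namely transversality of the bolt family to the stable manifold, and it does not follow from anything you cite. Uniqueness of $\beta_{\mathrm{ac}}$ in \cite[Theorem~D]{FHN} only says that $h$ has an isolated zero. As you yourself note, that gives a zero of odd order, not a non-zero derivative.

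Likewise, ``differentiating this comparison gives a positive lower bound'' is incorrect. Differentiating the strict inequality $B_{\beta_{\mathrm{ac}}+h}<B_{\beta_{\mathrm{ac}}}$ in $h$ yields only a weak inequality. A bound on a fixed interval $[\hat t_1,\hat t_2]$ also says nothing about whether $V$ decays as $s\to\infty$. The computer-assisted fallback would be a legitimate but different proof, and it is only sketched.

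The missing idea is to extract from the comparison theory sign information on \emph{both} the variation and its derivative, on the whole half-line, rather than a quantitative lower bound. Take $x=A_\beta$ as independent variable and set $W(x)=\partial_\beta B_\beta(x)|_{\beta=\beta_{\mathrm{ac}}}$.

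\begin{itemize}
\item The bolt expansion $B_\beta(x)=r_0^3+x^2/(\beta^3r_0^3)+O(x^4)$ gives $W<0$ and $W'<0$ for small $x>0$.
\item Hence, for small $h>0$, both $B_{\beta_{\mathrm{ac}}+h}<B_{\beta_{\mathrm{ac}}}$ and $\frac{d}{dx}B_{\beta_{\mathrm{ac}}+h}<\frac{d}{dx}B_{\beta_{\mathrm{ac}}}$ hold at some $x_0$.
\item \cite[Lemma~7.9(ii)]{FHN} propagates both inequalities to all $x\geq x_0$. Letting $h\to0$ gives $W\le0$ and $W'\le0$ for all $x>0$.
\item If $\mathfrak a=0$, then $W(x)=\tilde c\,x^{\rho}+o(x^{\rho})$ with $\rho=(3+\gamma_-)/3<0$. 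So $W$ and $W'$ have opposite signs unless $\tilde c=0$.
\item If $\tilde c=0$ as well, then $V\equiv0$, because the reduced variational equation is two-dimensional with coefficients converging at an integrable rate. This contradicts $W\not\equiv0$.
\end{itemize}

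Put differently, a non-trivial function that is non-positive and non-increasing cannot decay to zero. The weak inequalities you get by differentiating the comparison are therefore already enough, and no rigidity statement is needed.
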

\begin{proof}
Since $A_\beta$ is increasing, we may use $x=A_\beta$
as the independent variable.  Write $B_\beta$ as a function of $x$ and define
$$
W(x)=\left.\partial_\beta B_\beta(x)
\right|_{\beta=\beta_{\mathrm{ac}}}.
$$
The expansion at the singular orbit is
$$
B_\beta(x)
=
r_0^3+\frac{x^2}{\beta^3r_0^3}+O(x^4).
$$
Differentiating with respect to $\beta$ gives
$$
W(x)
=
-\frac{3x^2}{\beta_{\mathrm{ac}}^4r_0^3}
+O(x^4), \qquad W'(x)
=
-\frac{6x}{\beta_{\mathrm{ac}}^4r_0^3}
+O(x^3).
$$
Therefore both $W(x)$ and $W'(x)$ are strictly negative for all
sufficiently small $x>0$. For $h>0$ sufficiently small, consider the two solutions
$B_{\beta_{\mathrm{ac}}+h}$ and $B_{\beta_{\mathrm{ac}}}$,
parametrised by $x$. The smooth dependence of the singular initial-value problem on
$\beta$, together with the expansion above, gives
\begin{align*}
 B_{\beta_{\mathrm{ac}}+h}(x)
 -B_{\beta_{\mathrm{ac}}}(x)
 &=
 -\frac{3h}{\beta_{\mathrm{ac}}^4r_0^3}x^2
 +O(h^2x^2)+O(hx^4),\\
 \frac{d}{dx}B_{\beta_{\mathrm{ac}}+h}(x)
 -\frac{d}{dx}B_{\beta_{\mathrm{ac}}}(x)
 &=
 -\frac{6h}{\beta_{\mathrm{ac}}^4r_0^3}x
 +O(h^2x)+O(hx^3).
\end{align*}
Consequently, for some sufficiently small $x_0>0$ and all sufficiently
small $h>0$,
\begin{align}\label{eq:initial-beta-ordering}
 B_{\beta_{\mathrm{ac}}+h}(x_0)
 &<
 B_{\beta_{\mathrm{ac}}}(x_0),\quad 
 \frac{d}{dx}B_{\beta_{\mathrm{ac}}+h}(x_0)
 <
 \frac{d}{dx}B_{\beta_{\mathrm{ac}}}(x_0).
\end{align}

For $h>0$, the solution with parameter
$\beta_{\mathrm{ac}}+h$ is an ALC member of the
$C_7$ family, while the solution with parameter
$\beta_{\mathrm{ac}}$ is the AC member. Both solutions remain in the
comparison region
\[
 B>\max\{x,r_0^3\},\qquad F(x,B)>0,
 \qquad
 \frac{dB}{dx}>0.
\]
Hence \cite[Lemma~7.9(ii)]{FHN}, applied with
$ B_1=B_{\beta_{\mathrm{ac}}+h}$ and $B_2=B_{\beta_{\mathrm{ac}}}$
shows that the two strict inequalities in
\eqref{eq:initial-beta-ordering} are preserved for every $x\geq x_0$.
Together with the local expansion near $x=0$, this gives
\[
 B_{\beta_{\mathrm{ac}}+h}(x)
 <
 B_{\beta_{\mathrm{ac}}}(x),
 \qquad
 \frac{d}{dx}B_{\beta_{\mathrm{ac}}+h}(x)
 <
 \frac{d}{dx}B_{\beta_{\mathrm{ac}}}(x)
\]
for every $x>0$. Dividing these inequalities by $h>0$ and letting $h\to 0$, gives for every $x>0$,
\begin{equation}\label{eq:W-signs}
 W(x)\leq0,
 \qquad
 W'(x)\leq0.
\end{equation}

Suppose, for a contradiction, that $\mathfrak a=0$.
The radial origin and the scale $r_0$ have already been fixed.  Thus the deformation spaces with eigenvalues $-1$ and $-3$ are absent.  Moreover,
$V$ lies in the torsion-free locus $z=0$.  After removing the
$E_{\gamma_+}$, the only remaining possible asymptotic deformation direction is
therefore $E_{\gamma_-}$. Since $B_\beta=e^{3s}\overline B_\beta$, an $E_{\gamma_-}$-variation of the scale-invariant variable $\overline B_\beta$ gives a variation of $B_\beta$ of order $e^{(3+\gamma_-)s}$. Consequently, for some constant $\widetilde C$,
$$
W(s)
=
\widetilde C e^{(3+\gamma_-)s}
+o\bigl(e^{(3+\gamma_-)s}\bigr).
$$
On the other hand, $A(s)=A_0e^{3s}(1+o(1))$ for some $A_0>0$.  Since $x=A(s)$, the preceding expansion becomes
$$
W(x)=\tilde{c}x^\rho+o(x^\rho),
$$
where the constant $A_0$ has been absorbed into $\tilde c$, and
$\rho=\frac{3+\gamma_-}{3}=-\frac{1+\sqrt{145}}6<0$.
Differentiating with respect to $x$ gives
$$
W'(x)
=
\rho  \tilde c  x^{\rho-1}
+o(x^{\rho-1})
$$
as $x\to\infty$.
Since $\rho<0$, if $\tilde c\neq0$, then $W$ and $W'$ have opposite signs.  This contradicts \eqref{eq:W-signs}.

It remains to consider $\tilde c=0$.  In this case the
$E_{\gamma_-}$-coefficient also vanishes.  Thus $V$ has no component in either $E_{\gamma_+}$ or $E_{\gamma_-}$. Since the reduced variational equation is two-dimensional and its
coefficient matrix converges to $D\mathcal F_0(x_C)$ at the integrable
rate $O(e^{-3s})$, every solution is uniquely determined by its
$E_{\gamma_+}$- and $E_{\gamma_-}$-coefficients. Since both
coefficients vanish, we obtain $V\equiv0$. This is impossible because
the expansion at the singular orbit shows that $W(x)$ is not
identically zero.

\noindent
Both possibilities lead to a contradiction.  Hence $\mathfrak a\neq0.$
\end{proof}

The non-vanishing of $\mathfrak a$ says that varying $\beta$ moves the
torsion-free $C_7$ trajectory transversely in the
$E_{\gamma_+}$-direction.  We now formulate the nonlinear matching
condition that detects this direction for the rescaled nearly parallel
solutions.

\begin{lemma}[Local passage near the cone]\label{lem:local-passage}
Consider the reduced nearly parallel system near $x_C$, after fixing
$r_0$ and the radial origin.  Suppose that
\[
 T_{x_C}\mathcal P_{r_0}
 =E_{\gamma_-}\oplus E_1\oplus E_{\gamma_+},
 \qquad
 \gamma_-<0<1<\gamma_+,
\]
and that the sine cone orbit is tangent to $E_1$.

Choose sufficiently small incoming and outgoing sections
$\Sigma_{\mathrm{in}}$ and $\Sigma_{\mathrm{out}}$, where the stable
$C_7$ orbit meets $\Sigma_{\mathrm{in}}$ at $P_{\mathrm{in}}$.  Then
there are a neighbourhood $\mathcal O$ of $P_{\mathrm{in}}$ in
$\Sigma_{\mathrm{in}}$ and a $C^1$ function
\[
 \Theta:\mathcal O\longrightarrow\mathbb R
\]
with the following properties:
\begin{enumerate}
\item $\Theta(P_{\mathrm{in}})=0$;
\item up to multiplication by a nonzero constant,
      $D\Theta(P_{\mathrm{in}})$ is the $E_{\gamma_+}$-coefficient
      transported to $\Sigma_{\mathrm{in}}$;
\item if $P_j\to P_{\mathrm{in}}$, $z(P_j)>0$, and
      $\Theta(P_j)=0$, then the trajectories through $P_j$ meet
      $\Sigma_{\mathrm{out}}$, and their exit points converge to the
      intersection of the sine cone orbit with
      $\Sigma_{\mathrm{out}}$.
\end{enumerate}
\end{lemma}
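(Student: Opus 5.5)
We prove the lemma by straightening the flow near the hyperbolic point $x_C$ and then running a standard Shilnikov-type passage (exchange-lemma) argument in the linearising coordinates. By hypothesis the reduced system near $x_C$ is three-dimensional, with simple real eigenvalues $\gamma_-<0<1<\gamma_+$, and the slice $\{z=0\}$ is invariant because $z'=z$.

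\textbf{Step 1: coordinates and the map $\Theta$.} Choose $C^1$ coordinates $(u_-,u_1,u_+)$ centred at $x_C$ that straighten the local invariant manifolds. The local stable manifold is $W^s=\{u_1=u_+=0\}$. It is one-dimensional and contains the $C_7$ orbit, which by Lemma~\ref{lem:fixed-r0-modes} is tangent to $E_{\gamma_-}$. The unstable manifold $\{u_-=0\}$ is two-dimensional. We also straighten the strong unstable manifold $W^{uu}$, tangent to $E_{\gamma_+}$, as $\{u_-=u_1=0\}$. Because the eigenvalues are simple and satisfy only finitely many resonance conditions, Belitskii/Sternberg $C^1$ linearisation applies. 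If one prefers to avoid it, it suffices to use a $C^1$ invariant foliation by strong unstable fibres over the weak unstable manifold $W^{wu}\supset$ sine cone orbit. We may also take $u_1$ to be a function of $z$ alone, since $z$ is an exact eigenfunction with $z'=z$.

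Set $\Sigma_{\mathrm{in}}=\{u_-=\delta\}$ and $\Sigma_{\mathrm{out}}=\{u_1=\delta\}$; the latter is well adapted to the sine cone orbit, which lies in $W^{wu}=\{u_-=u_+=0\}$. Define
\[
\Theta(P):=u_+(P)-h\bigl(u_1(P)\bigr),
\]
where $\{u_+=h(u_1)\}$ is the trace of the weak-unstable fibre structure. In linearised coordinates $h\equiv0$, so $\Theta=u_+$. Then $\Theta(P_{\mathrm{in}})=0$, since $P_{\mathrm{in}}\in W^s$ has $u_1=u_+=0$. This gives property (1). Moreover $D\Theta(P_{\mathrm{in}})=du_+$, and at $x_C$ this is $\ell_+$ up to a nonzero scalar. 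Transporting it along the $C_7$ orbit from $x_C$ back to $\Sigma_{\mathrm{in}}$ by the linearised flow rescales it by a nonzero constant only. This gives property (2).

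\textbf{Step 2: passage estimate.} In linear coordinates the flow is $u_-(s)=u_-e^{\gamma_-s}$, $u_1(s)=u_1e^{s}$, $u_+(s)=u_+e^{\gamma_+s}$. Take $P_j\to P_{\mathrm{in}}$ with $z(P_j)>0$, so that $u_1(P_j)>0$ and $u_1(P_j)\to0$, and with $\Theta(P_j)=0$, so that $u_+(P_j)=0$. The trajectory through $P_j$ stays in $\{u_+=0\}$ and reaches $u_1=\delta$ after time $s_j=\log(\delta/u_1(P_j))\to\infty$. Its exit point is therefore
\[
\bigl(\delta\,(u_1(P_j)/\delta)^{-\gamma_-}\cdot(\text{bounded}),\ \delta,\ 0\bigr),
\]
and since $-\gamma_->0$ this tends to $(0,\delta,0)$. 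That limit point is exactly the intersection of the sine cone orbit with $\Sigma_{\mathrm{out}}$, which proves property (3).

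Without exact linearisation, the same conclusion needs control of $u_+$ along the trajectory. We would use a cone-field/Lyapunov estimate: $|u_+|\le K|u_1|^{\gamma_+}$ is forward invariant near the fibre through $u_+=0$, because $\gamma_+>1$, and on $\Sigma_{\mathrm{out}}$ the quantity $u_+$ then tends to $0$. This is the classical exchange-lemma inclination argument.

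\textbf{Main obstacle.} The delicate point is the regularity of the coordinate change, namely obtaining a $C^1$ function $\Theta$ whose differential at $P_{\mathrm{in}}$ is the $E_{\gamma_+}$ projection. We would rely on the existence of a $C^1$ strong-unstable foliation. This holds by the spectral gap $\gamma_+>1$, through the $C^1$ section theorem (Hirsch--Pugh--Shub) for the dominated splitting $E_{\gamma_-}\oplus E_1\,|\,E_{\gamma_+}$. We would then take $\Theta$ to be the fibre coordinate transverse to the centre-stable manifold $W^{cs}$, tangent to $E_{\gamma_-}\oplus E_1$, restricted to $\Sigma_{\mathrm{in}}$. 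With this choice $\Theta=0$ means $P\in W^{cs}$, and the passage statement reduces to the $\lambda$-lemma applied within the invariant two-dimensional manifold $W^{cs}$. There the dynamics is a planar saddle with eigenvalues $\gamma_-<0<1$, and the exit convergence follows directly.
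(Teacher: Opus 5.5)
Your proof is correct in substance, but it follows a different route from the paper's.

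\textbf{The paper's route.} The paper never linearises. It chooses nonlinear coordinates $(x,z,w)$ in which $\dot z=z$ exactly, the $C_7$ orbit is $\{z=w=0\}$, and the sine cone orbit is $\{x=w=0\}$ (this is the condition $f_\pm(0,z,0)=0$). It then solves a Shilnikov-type boundary-value problem. By variation of constants, the requirement $w(S(z_0))=0$ at the exit time $S(z_0)=\log(z_{\mathrm{out}}/z_0)$ becomes a fixed-point equation for $w_0$. This is solved by contraction as $w_0=h(z_0)$, and one sets $\Theta=w-h(z)$. The argument is elementary, but it leaves the $C^1$ extension of $h$ to $z_0=0$ to ``standard contraction estimates''.

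\textbf{What your route buys.} Your $C^1$ (Belitskii) linearisation makes $\Theta$ explicit and turns (2) and (3) into exact computations. Along the flow, $du_+$ is multiplied by exactly $e^{\gamma_+s}$. Combined with $du_+=\ell_++o(1)$ near $x_C$ and $|V(s)|=O(e^{\gamma_+s})$, this shows $\mathfrak a$ is a fixed nonzero multiple of $D\Theta(P_{\mathrm{in}})V(s_{\mathrm{in}})$. That is sharper than the paper's remark that $D\Theta$ has a nonzero $dw$-component.

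\textbf{Check the Belitskii hypothesis.} Verify it explicitly rather than describing it as ``finitely many resonance conditions''. There are in fact infinitely many resonances, for example $1=\gamma_-+\gamma_++8$, since $\gamma_-+\gamma_+=-7$. So Sternberg's smooth linearisation theorem is not available. Belitskii's $C^1$ criterion only excludes relations $\lambda_i=\lambda_j+\lambda_k$ with $\lambda_j<0<\lambda_k$. It holds here because neither $\gamma_-+1$ nor $\gamma_-+\gamma_+=-7$ is an eigenvalue.

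\textbf{One step needs repair.} Because $\gamma_+>1$, the invariant curve through $x_C$ tangent to $E_1$ is not unique. In linear coordinates, every curve $\{u_-=0,\ u_+=Cu_1^{\gamma_+},\ u_1>0\}$ is invariant and tangent to $E_1$. An arbitrary $C^1$ linearisation therefore puts the sine cone orbit on one of these curves, possibly with $C\neq0$, and not on $\{u_-=u_+=0\}$ as you assert. With $\Theta=u_+$, your exit points would then converge to $(0,\delta,0)$ rather than to the sine-cone point $(0,\delta,C\delta^{\gamma_+})$. There are two fixes:
\begin{itemize}
\item compose the linearisation with $(u_-,u_1,u_+)\mapsto(u_-,u_1,u_+-C(u_1)_+^{\gamma_+})$, which commutes with the linear flow and is $C^2$ because $\gamma_+>2$; or
\item take $\Theta=u_+-Cu_1^{\gamma_+}$, which is your $h$. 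Since $h'(0)=0$, property (2) is unchanged.
\end{itemize}

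\textbf{Your fallback.} The same caveat applies: $W^{cs}$ is not unique, and you must choose it to contain the sine cone orbit. With that choice you need no strong-unstable foliation and no Hirsch--Pugh--Shub argument. Any $C^1$ defining function of $W^{cs}\cap\Sigma_{\mathrm{in}}$ works, because its differential at $P_{\mathrm{in}}$ is automatically proportional to the transported $E_{\gamma_+}$-coefficient. The kernel of that coefficient consists exactly of the vectors whose forward growth is $o(e^{\gamma_+s})$, which includes $T W^{cs}$.

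\textbf{Drop the cone-field estimate.} The bound $|u_+|\le K|u_1|^{\gamma_+}$ only gives $|u_+|\le K\delta^{\gamma_+}$ on $\Sigma_{\mathrm{out}}$. It does not give $u_+\to0$.
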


\begin{proof}
Choose coordinates $(x,z,w)$ adapted to
$E_{\gamma_-}\oplus E_1\oplus E_{\gamma_+}$.  We may arrange that the
$C_7$ orbit is contained in $\{z=w=0\}$, the sine cone orbit is
$\{x=w=0,\ z>0\}$, and
\[
 \dot x=\gamma_-x+f_-(x,z,w),\qquad
 \dot z=z,\qquad
 \dot w=\gamma_+w+f_+(x,z,w),
\]
where $f_\pm$ vanish to second order and $f_\pm(0,z,0)=0$.  Take
\[
 \Sigma_{\mathrm{in}}=\{x=x_{\mathrm{in}}\},
 \qquad
 \Sigma_{\mathrm{out}}=\{z=z_{\mathrm{out}}\},
\]
with $x_{\mathrm{in}}$ and $z_{\mathrm{out}}$ fixed and small.  Then
$P_{\mathrm{in}}=(x_{\mathrm{in}},0,0)$.

For an incoming point $(x_{\mathrm{in}},z_0,w_0)$ with $z_0>0$, the
time at which its trajectory reaches $\Sigma_{\mathrm{out}}$ is
\[
 S(z_0)=\log\frac{z_{\mathrm{out}}}{z_0}.
\]
Variation of constants in the $w$-equation gives
\[
 w(S)=e^{\gamma_+S}
 \left(
 w_0+\int_0^S e^{-\gamma_+\tau}
 f_+(x(\tau),z(\tau),w(\tau))\,d\tau
 \right).
\]
Thus the boundary condition $w(S(z_0))=0$ is equivalent to
\[
 w_0=-\int_0^{S(z_0)}e^{-\gamma_+\tau}
 f_+(x(\tau),z(\tau),w(\tau))\,d\tau.
\]
Standard contraction estimates, uniform as $z_0\downarrow0$, give a
unique solution $w_0=h(z_0)$.  The function $h$ extends as a $C^1$
function to $z_0=0$ and satisfies $h(0)=0$.  Define
\[
 \Theta(x_{\mathrm{in}},z,w)=w-h(z).
\]
Then $\Theta=0$ is precisely the nonlinear matching condition that
removes the $E_{\gamma_+}$ exit.  At $P_{\mathrm{in}}$ its differential
has a nonzero $dw$-component and is therefore, up to scale, the
transported $E_{\gamma_+}$-coefficient.

For a trajectory satisfying $\Theta=0$, we have $w(S)=0$, while
\[
 x(S)=O\!\left(z_0^{-\gamma_-}\right)\longrightarrow0.
\]
Its exit point consequently converges to
$(0,z_{\mathrm{out}},0)$, the intersection of the sine cone orbit with
$\Sigma_{\mathrm{out}}$.
\end{proof}

The preceding lemma turns the local matching problem into a scalar
shooting problem.  Indeed, the equation $\Theta=0$ is precisely the
nonlinear condition that eliminates the $E_{\gamma_+}$-component
during passage near the cone.  The rescaled singular-orbit solutions
meet $\Sigma_{\mathrm{in}}$ at points depending on
$(\epsilon,\beta)$.  Composing this intersection map with $\Theta$
therefore defines a scalar matching function
$\mathcal A(\epsilon,\beta)$.

At $(\epsilon,\beta)=(0,\beta_{\mathrm{ac}})$, the incoming point
lies on the $C_7$ trajectory, and hence
$\mathcal A(0,\beta_{\mathrm{ac}})=0$.  The non-vanishing of the
exchange coefficient $\mathfrak a$ shows that varying $\beta$
changes this matching condition transversely.  The implicit function
theorem can therefore be used to choose $\beta=\beta(\epsilon)$ so
that

$$
\mathcal A(\epsilon,\beta(\epsilon))=0.
$$

For this choice, the local passage lemma identifies the outgoing limit
with the sine cone trajectory, while the rescaled inner solutions
converge to the AC $C_7$ structure (see Figure \ref{fig:local-passage-xC}).  This gives the two limiting
statements in the following theorem.
\begin{figure}[H]
\centering
\scalebox{0.85}{
\begin{tikzpicture}[
  x=1.1cm,
  y=1.1cm,
  >={Latex[length=2.2mm]},
  reference/.style={very thick},
  selected/.style={very thick},
  untuned/.style={thick,dashed},
  direction/.style={thin,gray!75,->}
]

\coordinate (xC) at (0,0);

\draw[dashed,gray!65] (xC) ellipse (2.05 and 1.5);
\node[gray!70] at (1.62,1.18) {$U$};

\draw[direction] (xC) -- (4.35,0)
  node[right,black] {$E_1$};

\draw[direction] (xC) -- (2.75,2.15)
  node[above right,black] {$E_{\gamma_+}$};

\draw[reference,blue!70!black,->]
  (-4.2,2.35)
  .. controls (-2.75,2.20) and (-1.25,0.55) ..
  (-0.10,0.04);

\node[blue!70!black,align=left] at (-4.18,2.62)
  {$C_7$ solution };

\draw[reference,green!50!black,->]
  (xC)
  .. controls (1.15,0) and (2.55,-0.30) ..
  (4.25,-0.72);

\node[green!50!black,align=left] at (5.12,-1.15)
  {sine cone solution\\[-1mm]
   tangent to $E_1$};

\draw[selected,red!75!black,->]
  (-4.15,1.72)
  .. controls (-2.65,1.65) and (-0.82,0.28) ..
  (0.12,-0.10)
  .. controls (1.30,-0.20) and (2.75,-0.48) ..
  (4.28,-0.82);

\node[red!75!black,align=left] at (-4.8,1.8)
  {$\widehat\varphi_{\epsilon,\beta(\epsilon)}$};

\node[red!75!black] at (1.35,-0.66)
  {$\mathcal A(\epsilon,\beta(\epsilon))=0$};

\draw[untuned,orange!80!black,->]
  (-4.10,1.08)
  .. controls (-2.50,1.05) and (-0.72,0.18) ..
  (0.12,0.05)
  .. controls (0.82,0.45) and (1.65,1.18) ..
  (2.58,1.95);

\node[orange!80!black,align=left] at (-4.98,0.9)
  { $\widehat\varphi_{\epsilon,\beta}$, $\mathcal{A}(\epsilon,\beta)\neq 0$};

\fill (xC) circle (2.2pt);
\node[below left=1pt] at (xC) {$x_C$};

\end{tikzpicture}}

\caption{The $C_7$ trajectory approaches $x_C$ through the local stable manifold. In general, a nearby solution $\widehat{\g2}_{\epsilon,\beta}$ exits with a nonzero
$E_{\gamma_+}$-component. Choosing $\beta=\beta(\epsilon)$ so that
$\mathcal A(\epsilon,\beta(\epsilon))=0$ removes this component, and
the selected solution leaves a neighbourhood of $x_C$ tangent to $E_1$ which is the sine cone trajectory.}
\label{fig:local-passage-xC}
\end{figure}
\begin{thm}[Local desingularisation of the sine cone]
\label{thm:local-desingularisation}
There exist $\epsilon_0>0$ and a $C^1$ function
\[
\beta:[0,\epsilon_0)\longrightarrow(0,\infty),
\qquad
\beta(0)=\beta_{\mathrm{ac}},
\]
with the following property.  For every $T<T_{\mathrm{sc}}$, there
exists $\epsilon_T\in(0,\epsilon_0)$ such that, for every
$0<\epsilon<\epsilon_T$, $\varphi_{\epsilon,\beta(\epsilon)}$  is
defined on $[0,T]$.  Moreover, if $\varphi_{\mathrm{sc}}$ denotes the
sine-cone structure, then for every $0<\delta<T<T_{\mathrm{sc}}$ we have
\begin{equation}\label{eq:outer-convergence}
\varphi_{\epsilon,\beta(\epsilon)}
\longrightarrow\varphi_{\mathrm{sc}}
\qquad\text{in }C^\infty([\delta,T])
\end{equation}
as $\epsilon\to0$.  After the inner rescaling, the same family
satisfies
\begin{equation}\label{eq:c7-inner-convergence}
 \widehat\varphi_{\epsilon,\beta(\epsilon)}
 \longrightarrow\varphi_{\beta_{\mathrm{ac}}}
 \qquad\text{in }C^\infty_{\mathrm{loc}}(X_{C_7}).
\end{equation}
\end{thm}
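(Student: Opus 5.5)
The plan is to assemble the ingredients already established into an implicit function argument for the scalar matching function, and then to propagate the resulting closeness through the three regions: inner, cone neighbourhood and outer.

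First, fix the incoming section $\Sigma_{\mathrm{in}}$ of Lemma~\ref{lem:local-passage} at log-time $s_{\mathrm{in}}=\log R$ for a large fixed $R$. By Proposition~\ref{prop:inner-limit} and the analytic dependence of the singular initial data \eqref{eqn:y_intial_11} on $(\widehat\lambda,\beta)$, the point where the rescaled trajectory $Y_{\epsilon,\beta}$ meets $\Sigma_{\mathrm{in}}$ depends $C^1$ on $(\epsilon,\beta)$ near $(0,\beta_{\mathrm{ac}})$, including at $\epsilon=0$, since $\widehat\lambda=\epsilon\lambda$ enters analytically. Its $z$-coordinate is $\epsilon\lambda R>0$ for $\epsilon>0$. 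Hence
\[
\mathcal A(\epsilon,\beta)=\Theta\bigl(Y_{\epsilon,\beta}\cap\Sigma_{\mathrm{in}}\bigr)
\]
is $C^1$ on a neighbourhood of $(0,\beta_{\mathrm{ac}})$, and $\mathcal A(0,\beta_{\mathrm{ac}})=\Theta(P_{\mathrm{in}})=0$.

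Second, compute $\partial_\beta\mathcal A(0,\beta_{\mathrm{ac}})$. At $\epsilon=0$ the trajectories are the torsion-free FHN trajectories $Y_\beta$, so $\partial_\beta$ of the entry point is $V(\log R)$. By item~2 of Lemma~\ref{lem:local-passage}, $D\Theta(P_{\mathrm{in}})$ is, up to a nonzero constant, the transported $E_{\gamma_+}$-coefficient. By \eqref{eq:exchange-coefficient} this gives $\partial_\beta\mathcal A(0,\beta_{\mathrm{ac}})=\kappa\, e^{\gamma_+\log R}\,\mathfrak a(\log R)$ with $\kappa\ne0$. Since $\mathfrak a(s)\to\mathfrak a$ with $\mathfrak a'(s)=O(e^{-3s})$, choosing $R$ large makes $\mathfrak a(\log R)\neq0$ by Proposition~\ref{prop:exchange-nonzero}. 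The implicit function theorem then yields a $C^1$ function $\beta(\epsilon)$ with $\beta(0)=\beta_{\mathrm{ac}}$ and $\mathcal A(\epsilon,\beta(\epsilon))=0$, and positivity of $\beta$ follows by continuity.

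Third, derive the convergence statements. The inner convergence \eqref{eq:c7-inner-convergence} follows directly from Proposition~\ref{prop:inner-limit}. The argument there is uniform for $\beta$ in a compact neighbourhood of $\beta_{\mathrm{ac}}$, by continuous dependence on parameters, so letting $\beta=\beta(\epsilon)\to\beta_{\mathrm{ac}}$ still gives $C^k$ convergence on each $[0,R']$, for every $k$.

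Through the cone region, item~3 of Lemma~\ref{lem:local-passage} with $P_j=Y_{\epsilon_j,\beta(\epsilon_j)}\cap\Sigma_{\mathrm{in}}\to P_{\mathrm{in}}$ shows that the trajectories reach $\Sigma_{\mathrm{out}}=\{z=z_{\mathrm{out}}\}$. Their exit points converge to the sine cone point. Because $z=\widehat\lambda\widehat t=\lambda t$, this exit occurs at the fixed outer time $t=z_{\mathrm{out}}/\lambda=:\delta_0$. In particular the solution stays nondegenerate there, since it remains in a neighbourhood of $x_C$ inside the Riemannian chamber \eqref{eq:riemannian-chamber}.

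On $[\delta_0,T]$ with $T<T_{\mathrm{sc}}$, the outer system \eqref{eqns:gen_ng2} is a regular ODE near the sine cone trajectory. Continuous dependence on initial data, applied to the differentiated equations as well, gives existence on $[\delta_0,T]$ for $\epsilon<\epsilon_T$ and $C^\infty$ convergence to $\varphi_{\mathrm{sc}}$. For $\delta<\delta_0$ one shrinks $z_{\mathrm{out}}$. Any $\delta>0$ can be reached this way because the exit-point convergence in item~3 holds for each fixed small section, and the intermediate interval $[\delta,\delta_0]$ is handled by the same regular-ODE continuity.

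The main obstacle is the second step: justifying that $D\Theta(P_{\mathrm{in}})\cdot V(\log R)$ really equals a nonzero multiple of $\mathfrak a(\log R)$. The issue is that the adapted coordinates of Lemma~\ref{lem:local-passage} live in the full nearly parallel phase space, whereas $\mathfrak a$ was defined in the reduced torsion-free space. I would first check that on $\{z=0\}$ the $E_{\gamma_+}$-direction of the full system coincides with that of the reduced torsion-free system, which holds since $z$ decouples with $\dot z=z$. I would then check that the $C^1$ extension of $h$ to $z_0=0$ satisfies $h\equiv0$ on $z=0$ near $P_{\mathrm{in}}$, so that $D\Theta$ restricted to $z=0$ is exactly $dw$. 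A secondary technical point is $C^1$ regularity of the entry map at $\epsilon=0$, which I would obtain by treating $\widehat\lambda$ as an additional analytic parameter in Theorem~\ref{thm:IVPsoln}.
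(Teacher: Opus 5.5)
Your proposal is correct and is essentially the paper's proof: you build the scalar matching function $\mathcal A=\Theta\circ\mathcal I$ on $\Sigma_{\mathrm{in}}$, identify $\partial_\beta\mathcal A(0,\beta_{\mathrm{ac}})$ with a nonzero multiple of the exchange coefficient $\mathfrak a$ via Lemma~\ref{lem:local-passage}(2) and Proposition~\ref{prop:exchange-nonzero}, apply the implicit function theorem, and then use Lemma~\ref{lem:local-passage}(3) with regular ODE dependence for the outer limit and Proposition~\ref{prop:inner-limit} for the inner one. The paper also tracks the flow-direction correction $\mathcal F_0(P_{\mathrm{in}})\,\partial_\beta s_{\mathrm{in}}$ in $\partial_\beta\mathcal I$, which $D\Theta(P_{\mathrm{in}})$ annihilates and which your identification of the entry-point derivative with $V(\log R)$ tacitly relies on; and for small $\delta$ you should not shrink $z_{\mathrm{out}}$, since that changes $\Theta$ and hence $\beta(\epsilon)$, whereas the backward continuous dependence on $[\delta,z_{\mathrm{out}}/\lambda]$ that you also invoke already works for every $\delta>0$ with a single $\beta$.
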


\begin{proof} Write
\[
\widehat Y_{\epsilon,\beta}(s)
=
\bigl(x_{\epsilon,\beta}(s),
z_{\epsilon,\beta}(s),
w_{\epsilon,\beta}(s)\bigr)
\]
in the coordinates of Lemma~\ref{lem:local-passage}, so that
\[
\Sigma_{\mathrm{in}}=\{x=x_{\mathrm{in}}\}.
\]
Define
\[
F(\epsilon,\beta,s)
:=
x_{\epsilon,\beta}(s)-x_{\mathrm{in}}.
\]
The equation $F(\epsilon,\beta,s)=0$ is precisely the condition that
the trajectory $\widehat Y_{\epsilon,\beta}$ meets
$\Sigma_{\mathrm{in}}$. The $C_7$ trajectory meets this section transversely at time $s_{\mathrm{in}}^0$ , and
hence
\[
\partial_sF(0,\beta_{\mathrm{ac}},s_{\mathrm{in}}^0)
=
\dot x_{0,\beta_{\mathrm{ac}}}(s_{\mathrm{in}}^0)\neq0.
\]
Although geometrically \(\epsilon\ge0\), the rescaled equations admit a smooth formal extension to \(\epsilon<0\). This allows us to apply the implicit-function theorem at \(\epsilon=0\), and then restrict the resulting family to \(\epsilon\ge0\). Since the solutions depend $C^1$
on $(\epsilon,\beta)$, the implicit-function theorem gives a unique
$C^1$ intersection time $s_{\mathrm{in}}(\epsilon,\beta)$ and,
consequently, a $C^1$ map
\[
\mathcal I(\epsilon,\beta)
=
\widehat Y_{\epsilon,\beta}
\bigl(s_{\mathrm{in}}(\epsilon,\beta)\bigr).
\]
After shrinking the parameter neighbourhood if necessary, this is the
unique intersection of the rescaled trajectory with
$\Sigma_{\mathrm{in}}$ in the local passage neighbourhood, and $\mathcal I(0,\beta_{\mathrm{ac}})=P_{\mathrm{in}}$.

Define the scalar function
\begin{equation}\label{eq:matching-function}
 \mathcal A(\epsilon,\beta)
 :=\Theta\bigl(\mathcal I(\epsilon,\beta)\bigr).
\end{equation}
Then $\mathcal A$ is $C^1$ and $\mathcal A(0,\beta_{\mathrm{ac}})=0$. Let
\[
V(s)
=
\left.
\partial_\beta\widehat Y_{0,\beta}(s)
\right|_{\beta=\beta_{\mathrm{ac}}}.
\]
Differentiating the identity $\mathcal I(0,\beta)
=
\widehat Y_{0,\beta}\bigl(s_{\mathrm{in}}(0,\beta)\bigr)$ at $\beta=\beta_{\mathrm{ac}}$ gives
\[
\partial_\beta\mathcal I(0,\beta_{\mathrm{ac}})
=
V(s_{\mathrm{in}}^0)
+
\mathcal F_0(P_{\mathrm{in}})
\,\partial_\beta s_{\mathrm{in}}(0,\beta_{\mathrm{ac}}).
\]
Thus $\partial_\beta\mathcal I(0,\beta_{\mathrm{ac}})$ is obtained
from $V(s_{\mathrm{in}}^0)$ by projecting along the flow direction
onto $T_{P_{\mathrm{in}}}\Sigma_{\mathrm{in}}$.  The flow direction is
tangent to the local stable manifold and therefore has zero
$E_{\gamma_+}$-coefficient.  Hence part~(2) of Lemma \ref{lem:local-passage} gives a constant $c_{\mathrm{in}}\neq0$ such that
\[
\partial_\beta\mathcal A(0,\beta_{\mathrm{ac}})
=
c_{\mathrm{in}}\mathfrak a.
\]
By Proposition~\ref{prop:exchange-nonzero}, $\mathfrak a\neq0$,
and consequently $\partial_\beta\mathcal A(0,\beta_{\mathrm{ac}})\neq0$.
The implicit-function theorem now gives $\epsilon_0>0$ and a unique
$C^1$ function
\[
 \beta:[0,\epsilon_0)\longrightarrow(0,\infty),
 \qquad
 \beta(0)=\beta_{\mathrm{ac}},
\]
such that
\[
 \mathcal A(\epsilon,\beta(\epsilon))=0.
\]
In particular, $\beta(\epsilon)\to\beta_{\mathrm{ac}}$ as
$\epsilon\to0$.  For $\epsilon>0$, set
\[
 P_\epsilon=\mathcal I(\epsilon,\beta(\epsilon)).
\]
Then $P_\epsilon\to P_{\mathrm{in}}$, $z(P_\epsilon)>0$, and
$\Theta(P_\epsilon)=0$.  

By part~(3) of Lemma \ref{lem:local-passage} the corresponding intersection with the outgoing section converges, as $\epsilon\to0$, to the outgoing
intersection of the sine-cone trajectory.  Since
\[
 z=\widehat\lambda e^s=\lambda t,
\]
the section $z=z_{\mathrm{out}}$ lies at the fixed regular time
$t_{\mathrm{out}}=z_{\mathrm{out}}/\lambda$ independently of $\epsilon$. Since the original equations are regular
for $t>0$, smooth dependence on initial data then gives \eqref{eq:outer-convergence}. 

Finally, the parameter dependence in Proposition \ref{prop:inner-limit} and
$\beta(\epsilon)\to\beta_{\mathrm{ac}}$ imply that, for every $R>0$,
\[
 \widehat\varphi_{\epsilon,\beta(\epsilon)}
 \longrightarrow\varphi_{\beta_{\mathrm{ac}}}
 \qquad\text{in }C^\infty([0,R]),
\]
which proves \eqref{eq:c7-inner-convergence} and the theorem. 
\end{proof}

\begin{rem}[The double-cover formulation]
Let $P:M_{1,1}\longrightarrow
X_{C_7}=M_{1,1}/\langle\sigma\rangle$ be the free double cover constructed in Proposition \ref{prop:c7-finite-quotient}.  Every nearly parallel $\mathrm G_2$-structure determined by a solution
$\xi_{a,b,c}$ pulls back along $P$, since
$$
d(P^*\varphi)
=
P^*(d\varphi)
=
\lambda P^*(*_\varphi\varphi)
=
\lambda *_{P^*\varphi}P^*\varphi.
$$
In the cohomogeneity-one variables this pullback leaves the coefficient functions, and hence the differential equations, unchanged.  The
desingularising family therefore lifts to the group diagram for the $C_7$ family. On this double cover, the outer limit is the sine cone over $(S^3\times S^3)/\mathbb Z_4$, while the rescaled inner limit is the AC $C_7$ metric.
Thus the result may equivalently be regarded as a local desingularisation of the sine cone over
$(S^3\times S^3)/\mathbb Z_4$ by the AC $C_7$ metric.
\end{rem}

Theorem \ref{thm:local-desingularisation} desingularises only one vertex of the sine cone. It neither
asserts that $\varphi_\epsilon$ closes at a second singular orbit nor that a reflection symmetry can be used. That would constitute an independent global shooting problem.  

\phantomsection
\addcontentsline{toc}{section}{References}
\bibliographystyle{amsalpha}
\bibliography{cohomogeneity_one_nearly_G2}
   
\end{document}